\newtheorem{theorem}{Theorem}
\newtheorem{prop}[theorem]{Proposition}
\newtheorem{corollary}[theorem]{Corollary}
\newtheorem{lemma}[theorem]{Lemma}
\newtheorem{defi}{Definition}
\newtheorem{remark}[theorem]{Remark}
\newcommand{\R}{\mathds R}
\newcommand{\II}{\mathrm I\!\mathrm I_v}
\newcommand{\IIs}{\mathrm I\!\mathrm I}
\newcommand{\WSigP}{W^{\top_v}_{S\cap \Sigma}}
\newcommand{\WSigPV}{W^{\top_V}_{S\cap \Sigma}}
\newcommand{\V}{\mathcal V}
\newcommand{\lW}{\ell(W)}
\def\bb#1\eb{\textcolor{blue}
	{#1}} %
\def\bm#1\em{\textcolor{magenta}
	{#1}} %
	\def\br#1\er{\textcolor{red}
	{#1}} %
\def\nvm{\widetilde{\nabla}^V}
\def\nvi{\widehat{\nabla}^V}
\def\V{\mathscr{V}}
\def\h{{h}}%\def\h{{\textsf h}}
\def\<{\langle}
\def\>{\rangle}
\title[The flag curvature of a submanifold of a Randers-Minkowski space]{The flag curvature of a submanifold of a Randers-Minkowski space in terms of Zermelo data}
\author[M. Huber]{Matthieu Huber}
\address{Departamento de Matem\'aticas, \hfill\break\indent
Universidad de Murcia, \hfill\break\indent
Campus de Espinardo,\hfill\break\indent
30100 Espinardo, Murcia, Spain}
\email{matthieu.huber@um.es}
\author[M. A. Javaloyes]{Miguel Angel Javaloyes}
\address{Departamento de Matem\'aticas, \hfill\break\indent
Universidad de Murcia, \hfill\break\indent
Campus de Espinardo,\hfill\break\indent
30100 Espinardo, Murcia, Spain}
\email{majava@um.es}
\thanks{ This work is a result of the activity developed within the framework of the Programme in
 	Support of Excellence Groups of the Regi\'on de Murcia, Spain, by Fundaci\'on S\'eneca, Science and Technology Agency of the Regi\'on de Murcia. The second author was partially supported by MICINN/FEDER project with reference PGC2018-097046-B-I00  and Fundaci\'on S\'eneca project with reference 19901/GERM/15. }
 	\thanks{2020 {\it Mathematics Subject Classification:} Primary 53B40, 53B25, 53B30\\
\textbf{Key words:} Finsler metrics, Flag Curvature, Randers metrics, Gauss-Codazzi equations, Zermelo navigation}
\begin{document}

\begin{abstract}
The main result of this paper is an expression of the flag curvature of a submanifold of a Randers-Minkowski space $(\V,F)$ in terms of invariants related to its Zermelo data $(h,W)$. More precisely, these invariants are the sectional curvature and
the second fundamental form of the positive definite scalar product $h$ and some projections of the wind $W$. This expression allows for a promising characterization of submanifolds with scalar flag curvature in terms of Riemannian quantities, which, when a hypersurface is considered, seems quite approachable. As a consequence, we prove that any $h$-flat hypersurface $S$ has scalar $F$-flag curvature  and the metric of its Zermelo data is conformally flat. As a tool for making the computation, we previously reobtain the Gauss-Codazzi equations of a pseudo-Finsler submanifold using anisotropic calculus.
\end{abstract}

\maketitle
\section{Introduction}
The theory of submanifolds in Finsler Geometry has a long history beginning with the work of M. Haimovici \cite{Hai35} and J. M. Wegener \cite{Weg36}. But the complexity of its computations has slowed down its development compared with the theory of Riemannian submanifolds. %has been put aside for many Finslerian researchers in the last decades. 
There are several aspects of Finsler submanifolds that can deserve some attention.
The study of minimal submanifolds and mean curvature
was brought up in \cite{Sh98}. As the definition of minimal submanifold depends on the choice of a volume form, there has been some controversy as there are several possibilities for this volume form \cite{AlBe06,Be09}. A second aspect that has attracted the attention of the Finslerian community is the study of the intrinsic geometry of a submanifold. At this point, the difficulty of making computations has been a hard obstacle to overcome, but the reward is not less appealing as one can generate many examples of Finsler manifolds in a natural way,  not to mention possible applications. 

The main goal of this paper is to obtain an expression for the flag curvature of a submanifold in a Randers-Minkowski space in terms of its Zermelo data. Recall that the flag curvature is one of the most important geometric invariants of a Finsler manifold. It is  a measure of how geodesics get apart when the initial directions lie in a certain plane.  More specifically, the flagpole is the direction of the initial geodesic, so that the flag curvature depends on a plane (the flag) and on a direction (the flagpole).  On the other hand, a Randers-Minkowski space is a vector space $\V$ endowed with a Randers norm. A Randers norm $F:\V\rightarrow \R$ can be characterized by having  as indicatrix $\Sigma=F^{-1}(1)$ an ellipsoid which contains the origin in its bounded domain.
This ellipsoid determines a positive-definite scalar product $h$ and a vector $W\in\V$ in the sense that the ellipsoid coincides with the unit sphere of $h$ up to a translation $-W$ of its center of mass to the origin.
% This ellipsoid determines a positive-definite scalar product $h$ on $\V$ and a vector $W$ (its center of mass) in such a way that when the ellipsoid is translated to have the center of mass in the origin, then it coincides with the unit sphere of $h$. 
The pair $(h,W)$ is called the Zermelo data of the Randers norm as it is related to the problem of Zermelo navigation, which consists of determining the path that minimizes the travel time for a ship or an airplane in the presence of a wind. The Zermelo data was used in \cite{BRS04} to classify Randers manifolds with constant flag curvature. This was very surprising since the problem of classifying constant flag curvature Randers manifolds appeared to be very difficult when making the computations in terms of  $(\alpha,\beta)$ as in \eqref{alphabet} rather than in terms of the Zermelo data $(h,W)$. Recall that a Finsler manifold is said to be of scalar flag curvature if the flag curvature depends only on the flagpole and not on the plane containing this flagpole, namely, the flag curvature is a positive homogeneous function of degree zero on the tangent bundle. The equations characterizing Randers manifolds of scalar flag curvature obtained in \cite{YaSh77} turned out to be incomplete \cite{Sh02,Sh03} (see also \cite{Bao04} for a detailed story).   Following  in the wake of the pioneering paper \cite{BRS04}, we have computed the flag curvature of an arbitrary submanifold $S$ of a Randers-Minkowski space $(\V,F)$ using only the geometric invariants of the Zermelo data $(h,W)$. More precisely, the flag curvature of $(S,F|_S)$, where $F|_S$ is the Finsler metric on $S$ induced by the Randers-Minkowski norm $F$, depends on the $h$-Riemannian sectional curvature $K^h$, the second fundamental form $\IIs'$ (computed with $h$), and some $h$-projections of $W$ (see \eqref{Kflagformula}).

 One of the most appealing consequences of our computation of the flag curvature is that it allows us to give a characterization of submanifolds with scalar flag curvature in a purely Riemannian way.  This leads to  a huge family of examples beginning with the class described  in Corollary \ref{flatclass}, which claims that every $h$-flat hypersurface is of scalar flag curvature for $F$ and the metric of its Zermelo data is conformally flat. Indeed, the classification of hypersurfaces of scalar flag curvature seems achievable looking at the condition given in Corollary~\ref{hyperclassi}. The case of arbitrary codimension could be much more difficult to solve. For example, the classification of submanifolds with constant sectional curvature of the Euclidean space, which is a particular case of our general problem putting $W=0$, is still an open problem with some partial results in low codimension \cite[Chapter 5]{DaTo19}. In any case, the codimension is very important, since the classification of submanifolds of scalar flag curvature up to a certain codimension could be of great help to obtain the classification of Randers manifolds of scalar flag curvature, which has remained as a major open problem in Finsler Geometry (see \cite[\S 4.3]{ChengSh12} for a summary of the most important results and \cite{ShCi12,ShXia13,CheYu14} for some recent results). Recall that there are several extensions of the  Nash embedding theorem for Riemannian manifolds to the Finslerian realm  (see \cite{BuIv93,Gu56,Gu58,Ing54} and also the introduction of \cite{Sh98} for a summary and some counterexamples for smooth and global embeddings).  These extensions claim that a Finsler manifold can be isometrically embedded in some Minkowski space. Though these theorems do not specify the type of Minkowski space in which the manifold is embedded, it is quite natural to think that, if one considers a Randers manifold, the embedding can be made into a Randers space. If this could be done so much as locally, the classification of scalar flag curvature submanifolds of a Randers-Minkowski space could lead to the classification of Randers manifolds of scalar flag curvature.

%Book \cite[\S 4.3]{ChengSh12} Isotropic scalar flag curvature \cite{CheYu14} a new class \cite{ShXia13} characterization \cite{ShCi12}

  All the computations have been carried out using the anisotropic calculus developed in \cite{Jav16,Jav19}. The main contribution of this kind of calculus is that it allows us to perform all calculations using affine connections on the manifold. This is done by fixing a vector field $V$ which is an arbitrary extension of the vector $v$ where we want to make the computation. In this way, one gets an affine connection $\nabla^V$ from the given anisotropic connection (see Definition \ref{aniconnection}  and recall that, roughly speaking, an anisotropic connection  is a connection which has a different value for every direction $v$).  In order to make all the results independent from the choice of $V$, it is necessary to add a term that depends on vertical derivatives (derivatives in the tangent space) and on $\nabla^V_XV$ (see \eqref{nablaTV}). This method allows us to avoid the use of coordinates in all our computations, giving always tensorial expressions. The first one to use this family of affine connections was H.-H. Matthias \cite{Mat80} using a geodesic vector field $V$ and it was generalized by the second author  sucessively in \cite{Jav14a,Ja14b,Jav16,Jav19}. 

Let us describe with more detail the structure and results of the paper. After giving the basic notions of anistropic calculus in \S 2, the next section is devoted  to obtain a sort of Gauss-Codazzi equations for pseudo-Finsler manifolds. This is a crucial step to study the intrinsic and extrinsic geometry of a submanifold and it has been achieved several times in literature independently and
with different purposes, methods and connections (see 
 \cite{Hombu36,Da45,Ak68,Mat85,Dra86,Nie04,Li2010,Li2011,Li2012,Wu16} and also  \cite[Theorem 2.1]{BeFa00}). Our deduction of the Gauss-Codazzi equations  will use the anisotropic calculus and the Chern connection (see Theorem \ref{J}). Indeed, we will obtain the Gauss-Codazzi equations not only in terms of the second fundamental form $\IIs$, but also  in terms of the tensor $Q$, the difference between the induced connection $\widehat\nabla$ and the Chern connection $\nabla$ of the submanifold $S$. It turns out that $Q$ depends on the Cartan tensor $C$ and the second fundamental form $\IIs$ (see Lemma \ref{CartanQ}). Then we study the properties of $Q$ (see Lemma \ref{CartanQCor}) to obtain an expression of the flag curvature of a non-degenerate submanifold (see Corollary \ref{Curflag}) from the Gauss equation \eqref{GEQ}. The properties of $Q$ are crucial to cancel many terms of \eqref{GEQ}. We also recover in Corollary \ref{CurTotGeo} a result by Li \cite{Li2012}, which claims that the flag curvature of a totally geodesic submanifold (see \S 3.3 for definitions and characterizations) coincides with the flag curvature of the background manifold. With the formula for the flag curvature of a submanifold at hand, in \S 4 we obtain an explicit expression 
 %… Having at hand a formula for the flag curvature of a submanifold in \S 4, we take advantage of this formula to obtain an explicit expression
  of the flag curvature of a submanifold of a Randers-Minkowski space in terms of its Zermelo data $(h,W)$  (for previous results on Randers and Kropina submanifolds see \cite[\S 8]{Mat85} and \cite[\S 3.3]{BeFa00}, and for the more general class of $(\alpha,\beta)$-metrics see  \cite{Bal07}).  First we compute the fundamental tensor $g$ (see Lemma \ref{gVProptoSigmaN}) and the Cartan tensor $C$ (see Proposition \ref{CartangSigma}) in terms of the Zermelo data. Then in \S 4.1, we describe the Zermelo data of a submanifold (Proposition \ref{ZermSub})), the second fundamental form of $S$ (Lemma \ref{IIvII}) and, finally, in Lemma \ref{tedious02}, we compute the most difficult term in the expression of the flag curvature, which depends on $\nabla Q$. With all this information, we are able to obtain the formula for the flag curvature in Theorem \ref{maintheorem}, which is given only in terms of the Zermelo data $(h,W)$, namely, the $h$-sectional curvature $K^h$ of $S$, its second fundamental form $\IIs'$  with respect to $h$ and some $h$-projections of $W$. In the case of a hypersurface, the computation of the second fundamental form becomes easier (see Lemma \ref{hyperSSF}), and this allows us to give a simpler expression of the flag curvature in \eqref{Kflagformula2} and a characterization of hypersurfaces with scalar flag curvature (see Corollary \ref{hyperclassi}). This leads us to  find the mentioned family of $h$-flat hypersurfaces with scalar flag curvature and conformally flat  metric of its Zermelo data. We finish the paper giving  an explicit computation of the flag curvature of the indicatrix of a Randers-Minkowski space in Corollary \ref{indicatrix}, and making some considerations about pseudo-Randers-Kropina metrics in the final Remark \ref{pseudoRanders}. In particular, all the results extend to this more general case, including Kropina metrics.
\section{Preliminaries and notations}
%In this section, we will introduce some definitions and notations to be used later. 
Given a manifold $M$, we will denote by $\pi: TM\rightarrow M$ the natural projection from its tangent bundle $TM$.
\begin{defi} \label{gCdef}
Let  $M$ be a manifold, and $A\subset TM\setminus\{ 0\}$ be a conic open subset of $TM$, namely, if $v\in A$ then $\lambda v\in A$ for every $\lambda>0$. We say that a smooth function $L:A\longrightarrow \R$ is a pseudo-Finsler metric when it satisfies the following properties:
\begin{itemize}
\item[$i)$] $L(\lambda v)=\lambda^2L(v) $ for all $ \lambda >0$ and $v\in A$,
\item[$ii)$] the fundamental tensor $g_v:T_{\pi(v)}M\times T_{\pi(v)}M\rightarrow \R$, defined for all $v \in A$ and $u,w \in T_{\pi (v)}M$ as
\begin{equation}\label{fundtensor}
g_v(u,w):=\frac{1}{2}\frac{\partial^2}{\partial s \partial t}L(v+su+tw)\Big|_{s=t=0},
\end{equation}
 is non-degenerate.
\end{itemize}
Furthermore, we define for every $u,w,z \in T_{\pi (v)}M$ the Cartan tensor of $L$ as
\[C_v(u,w,z):=\frac{1}{2}\frac{\partial}{\partial t}g_{v+tz}(u,w)\Big|_{t=0}=\frac{1}{4}\frac{\partial^3}{\partial r \partial s \partial t}L(v+ru+sw+tz)\Big|_{r=s=t=0}.\]
\end{defi}

In the following, the space of $A$-anisotropic tensors of type $(r,s)$ will be denoted by ${\mathfrak T}^r_s(M,A)$, while by convention ${\mathfrak T}^0_0(M,A)\equiv {\mathcal F}(A)$, where ${\mathcal F}(A)$ denotes the space of smooth real functions on $A$ (see \cite{Jav16} and \cite{Jav19}). One can think of them as a generalization of classical tensors, in the sense that there is a multilinear map for every $v\in A$ rather than every $p\in M$, namely, the coordinates of the tensor are functions on an open subset of $A$. We will denote by $\mathfrak{X}(M)$ the space of smooth vector fields on $M$ and by ${\mathcal F}(M)$ the space of smooth real functions on $M$.  Observe that the elements of  ${\mathfrak T}^1_0(M,A)$ can be interpreted as $A$-anisotropic vector fields, namely, for every $v\in A$ we choose a vector $X_v\in T_{\pi(v)}M$ in a smooth way. The subset $\mathfrak{X}(M)$ can be seen as a subset of ${\mathfrak T}^1_0(M,A)$, since in the case of  a classical vector field $X$, we can define $X_v:=X_{\pi(v)}$.  
In many cases, it will be clear  what the subset $A$ is, so we wil speak just about anisotropic tensors. 
\begin{defi}\label{aniconnection}
	An  {\em $A$-anisotropic (linear) connection} is  a map
	\[\nabla: A\times \mathfrak{X}(M)\times\mathfrak{X}(M)\rightarrow TM,\quad\quad (v,X,Y)\mapsto\nabla^v_XY\in T_{\pi(v)}M,\]
	such that for any $X,Y,Z\in  {\mathfrak X}(M)$, $v\in  A$, $f,h\in {\mathcal F}(M)$, 
	\begin{enumerate}[(i)]
		\item $\nabla^v_X(Y+Z)=\nabla^v_XY+\nabla^v_XZ$, 
		\item $\nabla^v_X(fY)=X(f) Y|_{\pi(v)}+f(\pi(v)) \nabla^v_XY$,
		\item $ \nabla_XY $ is an $ A $-anisotropic vector field (as a map $A \ni v \mapsto \nabla^v_XY \in T_{\pi(v)}M $),
		\item $\nabla^v_{fX+hY}Z=f(\pi(v))\nabla^v_XZ+h(\pi(v)) \nabla^v_YZ$.
	\end{enumerate}
\end{defi}
Given a vector field $V$ on an open subset $\Omega\subset M$ which is $A$-admissible, namely, $V_p\in A$ for any $p\in \Omega$, and an $A$-anisotropic tensor $T\in {\mathfrak T}^r_s(M,A)$, we will denote by $T_V\in {\mathfrak T}^r_s(\Omega)$ the classical $(r,s)$-tensor given at every point $p\in \Omega$ by $(T_V)_p:=T_{V_p}$. We also will define the affine connection $\nabla^V$ given by $(\nabla^V_XY)_p:=\nabla^{V_p}_XY$ for any $X,Y\in \mathfrak{X}(\Omega)$.
Having at our disposal the anisotropic connection and a vector field $X\in {\mathfrak X}(M)$, one can define now an anisotropic tensor derivation $\nabla_X$  (see \cite[\S 2.2]{Jav16} for details) in the space of anisotropic tensors ${\mathfrak T}^r_s(M,A)$.   If $h\in{\mathcal F}(A)\equiv {\mathfrak T}^0_0(M,A)$, then $\nabla_Xh\in{\mathcal F}(A)$ is determined by
 \begin{equation}\label{derh}
(\nabla_X h) \circ V = X (h\circ V) - (\partial^\nu h)_V(\nabla^V_XV),
 \end{equation}
 where $V$ is any $A$-admissible vector field on $\Omega\subset M$ which extends $v$, namely, $V_{\pi(v)}=v$ and $\partial^\nu h$ is the vertical derivative of $h$ %(see \cite[Definition 4]{Jav16}). 
 defined as 
 \[(\partial^\nu h)_v(z)=\frac{\partial}{\partial t}h(v+tz)|_{t=0}\] for every $v\in A$ and $z\in T_{\pi(v)}M$. Moreover, by the chain rule, we deduce the following formula for the tensor derivative $\nabla_XT$ of 
 $T\in  {\mathfrak T}^0_s(M,A)$,
\begin{equation}\label{nablaXT}
\nabla_XT(X_1,\ldots,X_s)=\nabla_X(T(X_1,\ldots,X_s))-\sum_{i=1}^rT(X_1,\ldots,\nabla_XX_i,\ldots,X_s),
\end{equation}
where $X,X_1,\ldots,X_s\in \mathfrak{X}(\Omega)$.  Observe that, by the ${\mathcal F}(A)$-multilinearity, it is enough to define the anisotropic tensor for classical vector fields since this determines its value in arbitrary anisotropic vector fields.  In addition,
\begin{equation}\label{partialX}
\nabla_X(T(X_1,\ldots,X_s))\circ V=X(T_V(X_1,\ldots,X_s))-(\partial^\nu T)_V(X_1,\ldots,X_s,\nabla^V_XV),
\end{equation}
where the vertical derivative $(\partial^\nu T)$ is given by
\begin{equation}\label{partialT}
(\partial^\nu T)_v(Y_1,\ldots,Y_{s+1})=\frac{\partial}{\partial t} T_{v+t Y_{s+1}|_{\pi(v)}}(Y_1,\ldots,Y_s)
\end{equation}
 for $Y_1,\ldots,Y_{s+1}\in \mathfrak{X}(M)$ (see \cite[Eq. (6)]{Jav19}).
   Putting together  \eqref{nablaXT} and \eqref{partialX}, we obtain 
   a formula for $\nabla_XT$ using an $A$-admissible extension $V$ of $v$,
\begin{multline}\label{nablaTV}
(\nabla_XT)_V(X_1,\ldots,X_s)=X(T_V(X_1,\ldots,X_s))-(\partial^\nu T)_V(X_1,\ldots,X_s,\nabla^V_XV)\\-\sum_{i=1}^sT_V(X_1,\ldots,\nabla^V_XX_i,\ldots,X_s).
\end{multline}   
A similar formula can be obtained when one considers an anisotropic tensor $T\in {\mathfrak T}^1_s(M,A)$ as an ${\mathcal F}(A)$-multilinear map 
$T: {\mathfrak T}^1_0(M,A)\times \ldots\times {\mathfrak T}^1_0(M,A)\rightarrow {\mathfrak T}^1_0(M,A)$. Taking into account that $\mathfrak{X}(M)\subset {\mathfrak T}^1_0(M,A)$,
\begin{multline}\label{nablaTV2}
(\nabla_XT)_V(X_1,\ldots,X_s)=\nabla^V_X(T_V(X_1,\ldots,X_s))-(\partial^\nu T)_V(X_1,\ldots,X_s,\nabla^V_XV)\\-\sum_{i=1}^sT_V(X_1,\ldots,\nabla^V_XX_i,\ldots,X_s),
\end{multline}   
where $X,X_1,\ldots,X_s\in \mathfrak{X}(\Omega)$ and $\partial^\nu T$ is defined formally as in \eqref{partialT}.
 Eq. \eqref{derh} also allows us to extend the anisotropic derivation to ${\mathfrak T}^1_0(M,A)\times {\mathfrak T}^1_0(M,A)$ (see \cite[Remark 2.3]{Jav19}).
    Then it is possible to define its associated curvature tensor $R_v:{\mathfrak X}(M)\times {\mathfrak X}(M)\times {\mathfrak X}(M)\rightarrow T_{\pi(v)}M$:
\begin{equation}\label{Rv}
R_v(X,Y)Z=\nabla^v_X(\nabla_YZ)-\nabla^v_Y(\nabla_XZ)-
\nabla^v_{[X,Y]}Z,
\end{equation}
for any $v\in A$ and $X,Y,Z\in {\mathfrak X}(M)$. 
%We say that a vector field $V$ in some open subset $\Omega\subset M$ is $A$-admissible if $V(p)\in A$ for all $p\in \Omega$. Then, if $V$ is $A$-admissible in $\Omega$, we can define an affine connection $\nabla^V$ using the anisotropic connection $\nabla$ as
%\[(\nabla^V_XY)_p=\nabla^{V(p)}_XY\]
%for all $X,Y\in \mathfrak{X}(\Omega)$ and $p\in \Omega$. 
Moreover, the curvature tensor of $\nabla$ can be computed using $\nabla^V$:
\begin{equation}\label{RvRV}
R_v(X,Y)Z=R^V_{\pi(v)} (X,Y)Z-P_v(Y,Z,\nabla^V_XV)+P_v(X,Z,\nabla^V_YV),
\end{equation}
for any $v\in A$, where $V,X,Y,Z\in {\mathfrak{X}}(\Omega)$, being $V$ an $A$-admissible extension of $v$, $R^V$, the curvature tensor of the affine connection $\nabla^V$ and $P$ the vertical derivative of $\nabla$, namely,
\begin{align}
\label{tensorP} P_v(X,Y,Z)&=\left.\frac{\partial}{\partial t}\nabla^{v+tZ|_{\pi(v)}}_XY\right\vert_{t=0},\\
\label{CurvRV}R^V(X,Y)Z&=\nabla^V_X\nabla^V_YZ-\nabla^V_Y\nabla^V_XZ-
\nabla^V_{[X,Y]}Z.
\end{align}
Observe that $P$ is an $A$-anisotropic tensor, %called the vertical derivative of $\nabla$, 
but $R^V$ is not, since it depends on the extension $V$. Moreover, for every $v\in A$, it is always possible to choose an $A$-admissible extension $V$ of $v=V|_{\pi(v)}$ on some open subset $\Omega$ such that
\begin{equation}\label{nablaXV=0}
\nabla^v_XV=0,
\end{equation}
for every $X\in \mathfrak{X}(\Omega)$ (see \cite[Proposition 2.13]{Jav19}). In the following we will express this condition as $\nabla^vV=0$.
%\begin{remark}  
%With the same notation of $\cite{JaSo14}$, for a pseudo-Finsler metric $(M,L)$ with conic domain $A\subset TM\setminus \{0\}$, a vector field $V$ on an open subset $\Omega \subset M$ is said to be $L$-admissible if $V(x)\in A \cap T_xM$ for every $ x \in \Omega$.
%\end{remark}
\begin{defi}
Given  a pseudo-Finsler manifold $(M,L)$, the {\em Levi-Civita-Chern connection} is the unique anisotropic connection $\widetilde\nabla$ that is torsion-free, namely,
\begin{equation}
\label{tor}\widetilde\nabla^v_XY-\widetilde\nabla^v_YX=[X,Y], \quad \forall X,Y\in  {\mathfrak X}(M), \,\, v\in A
\end{equation}
and  is compatible with $g$, namely, $\widetilde\nabla g=0$. The last condition is equivalent to the following: for every
 $A$-admissible vector field $V$ on an open subset $\Omega \subset M$, the affine connection $\widetilde\nabla^V$ on $\Omega$ is almost
 $g$-compatible:
\begin{align}
\label{gco}X (g_V(Y,Z))=g_V(\widetilde\nabla^V_XY,Z)+g_V(Y,\widetilde\nabla^V_XZ)+2C_V(\widetilde\nabla^V_XV,Y,Z).
\end{align}
The equivalence follows easily from \eqref{nablaTV} and the fact that $\partial^\nu g=2C$.
This leads to the following anisotropic Koszul formula, by rotation through $ X,Y,Z $:
\begin{align}\nonumber
2 g_V(\widetilde\nabla^V_XY,Z)=\,\,&
X(g_V(Z,Y))+Y(g_V(X,Z))-Z(g_V(X,Y))\\&\nonumber
+g_V([Z,X],Y)+g_V(X,[Z,Y])+g_V(Z,[X,Y])\\&\label{Koszul0}
-2C_V(Z,Y,\widetilde\nabla^V_XV)-2C_V(X,Z,\widetilde\nabla^V_YV)+2C_V(X,Y,\widetilde\nabla^V_ZV).
\end{align}
\end{defi}
Once we have introduced the Levi-Civita-Chern connection of a pseudo-Finsler manifold, we can define the flag curvature in an easy way. The flag curvature turns out to be a very important geometric invariant of Finsler geometry. It is a measure of how geodesics get apart. This depends on the initial geodesic,  so its direction will be the flagpole $v\in A$. The plane where we consider  the variation of geodesics is determined by the flag $u\in T_{\pi(v)}M$.
\begin{defi}
The {\em flag curvature} $ \widetilde{K}_v $ of a pseudo-Finsler manifold $ (M,L) $ with flagpole $ v \in A $ and flag $ u \in T_{\pi(v)} M $  satisfying that $L(v) g_v(u,u)-g_v(v,u)^2 \neq 0 $ is given by 
\[
\widetilde{K}_v(u)
=
\frac{g_v(\widetilde{R}_v(v,u)u,v)}{L(v)g_v(u,u)-g^2_v(u,v)},
\]
where $\widetilde R$ is the curvature tensor of the Levi-Civita-Chern connection $\widetilde\nabla$ of $(M,L)$.
\end{defi}
 \begin{remark}
The flag curvature can be computed using other anisotropic connections as for example the Berwald one and, more generally, with any distinguished connection (see \cite[Proposition 3.6]{Jav19}). As we will see later, this not will be the case of the induced connection of a submanifold.
\end{remark} 
\section{Geometry of submanifolds in Finsler spaces}
\subsection{The second fundamental form and the induced connection}
Suppose that $(M,L)$ is a pseudo-Finsler manifold and $S\subset M$ a non-degenerate submanifold of $M$, namely, the restriction of $g_v$ into the tangent space $T_{\pi(v)}S$ is  non-degenerate for every $v\in TS\cap A$.  
%The orthogonal  space $TS^\bot$ defined as the subset of vectors $v\in A$ such that $g_v(v,w)=0$ for every $w\in T_{\pi(v)}S$ is a cone and not necessarily a vector subspace. Let $T_pS^\bot =TS^\bot\cap T_pM$, and
%given a vector $v\in T_pS^\bot$, let $(T_pS)^{\bot_{g_v}}$ be the $g_v$-orthogonal space to $T_pS$. Unlike $T_pS^\bot$, $(T_pS)^{\bot_{g_v}}$ is a vector subspace, hence  $T_pM=T_pS\bigoplus (T_pS)^{\bot_{g_v}}$. 
 In such a case, $g_v$ allows us to obtain a decomposition of $T_{\pi(v)}M$ in the tangent and $g_v$-orthogonal part to $T_{\pi(v)}S$,
\[T_{\pi(v)}M=T_{\pi(v)}S\bigoplus (T_{\pi(v)}S)^{\bot_{g_v}}.\]
In the following, we will denote with the superindices $\top_{g_v}$  and $\bot_{g_v}$ the $g_v$-projection, respectively, into the tangent and $g_v$-orthogonal part to $T_{\pi(v)}S$. 
Now given $v\in A$ with $\pi(v)\in S$  and vector fields $X,Y\in \mathfrak{X}(S)$, it is possible to define $\widetilde{\nabla}^v_XY$  by considering  any choice of extensions $\widetilde X\in \mathfrak{X}(M)$ and $\widetilde Y\in \mathfrak{X}(M)$ of $X$ and $Y$, respectively. Then
\[
\widetilde{\nabla}^v_XY:=\widetilde{\nabla}^v_{\widetilde X}{\widetilde Y}
\]
is well-defined, namely, it does not depend on the choice of $\widetilde X$ and $\widetilde Y$. Using the above decomposition $T_{\pi(v)}M=T_{\pi(v)}S\bigoplus (T_{\pi(v)}S)^{\bot_{g_v}}$, one can express 
\begin{equation}\label{decomp}
\widetilde{\nabla}^v_XY=\widehat \nabla^v_XY+\IIs_v(X,Y),
\end{equation}
 where the tangent part to $S$, $\widehat \nabla^v_XY$, determines an $A\cap TS$-anisotropic connection on $S$ and $\IIs_v(X,Y)$ determines an anisotropic tensor in the following sense: for every $v\in A\cap TS$, it gives a map 
\[\IIs_v:T_{\pi(v)}S\times T_{\pi(v)}S \rightarrow (T_{\pi(v)}S)^{\bot_{g_v}}\] which is multilinear. Moreover, $\IIs_v$ is symmetric and $\widehat \nabla$ is torsion-free, since
\[\widetilde \nabla^v_XY-\widetilde\nabla^v_YX=[X,Y]\]
and as the Lie bracket $[X,Y]$ is tangent to $S$, it follows that $\widehat\nabla^v_XY-\widehat\nabla^v_YX=[X,Y]$ and $\IIs_v(X,Y)=\IIs_v(Y,X)$.

\subsection{The  Chern connection of $S$}
Let $(M,L)$ be a pseudo-Finsler manifold and $S\subset M$ a non-degenerate submanifold.  In the following, we will denote with $L|_S$ the restriction $L|_{TS\cap A}:TS\cap A\rightarrow \R$. It is easy to check that $L|_S$ is a pseudo-Finsler metric on $S$, because for every $v\in TS\cap A$  its fundamental tensor is the restriction of $g_v$ in \eqref{fundtensor} to $T_{\pi(v)}S\times T_{\pi(v)}S$, which is non-degenerate by hypothesis. 
Let $ \widetilde\nabla $ be the Levi-Civita-Chern connection of $(M,L)$ and $ \nabla $ the Levi-Civita-Chern connection of $(S,L|_S)$.
Let $Q$ the difference tensor between $\widehat\nabla$ introduced in \eqref{decomp} and $\nabla$, given for $v\in A\cap TS$ and  $X,Y\in \mathfrak{X}(S)$ by
\begin{align}\label{c2}
\widehat\nabla^v_X Y=\nabla^v_XY+Q_v(X,Y).
\end{align}
Observe that $Q$ determines an anisotropic tensor  ${\mathfrak T}^1_2(S,TS\cap A)$ with $Q: {\mathfrak T}^1_0(M,A)\times {\mathfrak T}^1_0(M,A)\rightarrow {\mathfrak T}^1_0(M,A)$ and one can define its
anisotropic tensor derivative $\nabla Q$ following \eqref{nablaTV2}.  In the next lemma, we will determine this tensor $Q$. The study of the relation between the induced and the intrinsic connections has been worked out 
to some extent in many of the references cited in the introduction for Gauss-Codazzi equations. Moreover, in \cite{CheYaYa00}, the authors study specifically the relation between the induced and the Chern connections without any mention of Gauss-Codazzi equations. 
%{\color{blue} [This is what in Li2012,Li2010,LiJGP11 calls the Gauss Formula]}
\begin{lemma}\label{CartanQ} %[Note that the following hypotheses have just been written above]
%Let $(M,L)$ be a pseudo-Finsler manifold and $P\subset M$ a nondegenerate submanifold. Let $\widehat{\nabla}$ the induced connection defined in \eqref{c1}, $\nabla$ the Levi-Civita-Chern connection of $(P, L|_P)$ and $ Q=\widehat\nabla-\nabla $.
%%If $Q_V$ is the tensor defined in $\eqref{T}$, the Gauss formula would be as below where $X,Y,V \in \mathfrak{X}(P)$
%Then,
Let $(M,L)$ be a pseudo-Finsler manifold and $S$, a non-degenerate submanifold. For $v\in A\cap TS$ and $u,w,z\in T_{\pi(v)}S$,
\begin{enumerate}[(i)]
\item $Q_v(v,v)=0,$ %\label{Q1}
\item $Q_v(u,v)=-( C^\flat_v(\II(v,v),u))^{\top_{g_v}}$, %\label{Q2} 
where   if $u_1,u_2,u_3\in T_{pi(v)}M$, $C^\flat_v$ is determined by  $C_v(u_1,u_2,u_3)=g_v(C^\flat_v(u_1,u_2),u_3)$,  namely, it is $g_v$-metrically equivalent to the Cartan tensor $C$.
%\end{align}
% and
\item the anisotropic tensor $Q$ is determined by
\begin{multline*}%\label{T}
g_v(Q_v(u,w),z)=- C_v(Q_v(u,v)+\II(u,v),w,z)
-C_v(Q_v(w,v)+\II(w,v),z,u)\\
+C_v(Q_v(z,v)+\II(z,v),u,w).
\end{multline*}
\item the anisotropic tensor $Q$ is symmetric.
\item  $(\partial^\nu Q)_v(v,v,u)=-2 Q_v(v,u)$. 
\end{enumerate}
\end{lemma}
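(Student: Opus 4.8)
\textbf{Proof proposal for Lemma \ref{CartanQ}.}

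The plan is to unwind the definition of $Q$ and compare the Koszul formulas for the two Levi-Civita-Chern connections $\widetilde\nabla$ (of $(M,L)$) and $\nabla$ (of $(S,L|_S)$), using that the fundamental tensor of $L|_S$ is the restriction of $g$, together with the splitting \eqref{decomp} and the almost-$g$-compatibility relation \eqref{gco}. For (i), I would work at the point $\pi(v)$ with an $A\cap TS$-admissible extension $V$ of $v$ inside $S$ satisfying $\nabla^vV=0$ (such an extension exists by \eqref{nablaXV=0}); since the Chern connection is determined by its geodesic spray, $\widehat\nabla^v_VV$ and $\nabla^v_VV$ both equal $\widetilde\nabla^v_VV$ restricted appropriately, and in fact the geodesics of $(S,L|_S)$ are precisely the $\widehat\nabla$-geodesics, so the tangential parts agree: $Q_v(v,v)=0$.

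For (ii), the key is almost-$g$-compatibility. Evaluate \eqref{gco} for $\widetilde\nabla$ on $S$ with the extension $V$ of $v$ as above (so $\nabla^V_VV=0$ at $\pi(v)$ and hence $\widehat\nabla^V_VV$ has vanishing tangential derivative, while $\widetilde\nabla^V_VV=\IIs_v(v,v)$ up to the tangential term that vanishes): differentiating $g_V(V,Y)$ along $V$ and along $Y$ and comparing with the intrinsic version for $\nabla$, the Cartan-tensor correction terms $2C_V(\widetilde\nabla^V_\cdot V,\cdot,\cdot)$ produce exactly a term $C_v(\IIs_v(v,v),\cdot,\cdot)$; projecting onto $T_{\pi(v)}S$ and using non-degeneracy of $g_v|_{T_{\pi(v)}S}$ yields $Q_v(u,v)=-\big(C^\flat_v(\IIs_v(v,v),u)\big)^{\top_{g_v}}$. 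Part (iii) is then the full Koszul computation: write the anisotropic Koszul formula \eqref{Koszul0} for $\widetilde\nabla$ at $v\in TS\cap A$ with all arguments in $TS$, subtract the intrinsic Koszul formula for $\nabla$ (whose metric, bracket and derivative terms are identical since $L|_S$ has fundamental tensor $g|_{TS}$), and observe that the difference is entirely carried by the three Cartan terms; in each of these the vertical direction $\widetilde\nabla^V_\cdot V$ splits as $\widehat\nabla^V_\cdot V+\IIs(\cdot,\cdot)=\nabla^V_\cdot V+Q_v(\cdot,\cdot)+\IIs(\cdot,\cdot)$, and the $\nabla^V_\cdot V$ parts cancel against the intrinsic side, leaving the stated three-term expression in $Q_v(\cdot,v)+\IIs(\cdot,v)$.

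Part (iv) follows because both $\widehat\nabla$ and $\nabla$ are torsion-free (the former because $\IIs$ is symmetric and the Lie bracket of fields tangent to $S$ is tangent to $S$, as already noted after \eqref{decomp}), so $Q_v(u,w)-Q_v(w,u)=[X,Y]-[X,Y]=0$. For (v), I would differentiate the identity in (i), $Q_v(v,v)=0$, vertically: using the formula \eqref{partialT} for $\partial^\nu$ of a $(1,2)$-anisotropic tensor and the quadratic-in-$v$ nature of $Q_v(v,v)$ together with symmetry from (iv), the homogeneity/Euler-type argument gives $(\partial^\nu Q)_v(v,v,u)+2Q_v(v,u)=0$, i.e.\ $(\partial^\nu Q)_v(v,v,u)=-2Q_v(v,u)$. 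The main obstacle I anticipate is bookkeeping in part (iii): one must be careful that the ``same'' Koszul formula for $\nabla$ really does reproduce, term by term, the metric/bracket part of the $\widetilde\nabla$-formula (this uses that $Q$ and $\IIs$ only enter through the vertical correction terms, not through the derivatives of $g$), and that the projections onto $T_{\pi(v)}S$ are handled consistently so that the normal components of $\IIs$ interact correctly with $C_v$, whose arguments range over all of $T_{\pi(v)}M$.
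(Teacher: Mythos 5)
Your treatment of parts (iii), (iv) and (v) is correct and is exactly the paper's argument: (iii) by subtracting the two Koszul formulas \eqref{Koszul0} (the metric and bracket terms coincide because the fundamental tensor of $L|_S$ is the restriction of $g$, so only the Cartan correction terms survive, with $\widetilde\nabla^V_XV-\nabla^V_XV=Q_V(X,V)+\IIs_V(X,V)$ and the normal part of $\widetilde\nabla^V_XY-\nabla^V_XY$ killed by $g_v(\cdot,Z)$ for tangent $Z$); (iv) from torsion-freeness of both $\widehat\nabla$ and $\nabla$; (v) by differentiating $Q_{v+tu}(v+tu,v+tu)=0$ in $t$.

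The one genuine weak point is your standalone argument for (i). The assertion that ``the geodesics of $(S,L|_S)$ are precisely the $\widehat\nabla$-geodesics'' is equivalent to $Q_v(v,v)=0$ itself, so invoking it is circular unless you supply an independent proof (e.g.\ via the first variation of the $L|_S$-energy, showing that critical curves in $S$ satisfy $(\widetilde\nabla^{\dot\gamma}_{\dot\gamma}\dot\gamma)^{\top_{g_{\dot\gamma}}}=0$); moreover, the phrase ``the Chern connection is determined by its geodesic spray'' is not true as stated (the Berwald and Chern connections share the same spray but differ as connections). Your sketch of (ii) via \eqref{gco} is also more delicate than it looks: differentiating $g_V(V,Y)$ and isolating the right Cartan term essentially redoes the Koszul computation. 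Both issues evaporate if you reorder as the paper does: prove (iii) first, then observe that the total symmetry and $0$-homogeneity of $g$ force $C_v(u_1,u_2,u_3)=0$ whenever any argument equals $v$, so setting $u=w=v$ in (iii) gives $g_v(Q_v(v,v),z)=0$ for all tangent $z$, hence (i) by non-degeneracy of $g_v|_{T_{\pi(v)}S}$, and setting $w=v$ gives $g_v(Q_v(u,v),z)=-C_v(\IIs_v(v,v),u,z)$, which is (ii). With that rerouting your proof is complete.
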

\begin{proof}
%Consider an $A$-admissible extension $V$ of $v$ such that $\nabla^vV=0$ (recall \eqref{nablaXV=0}). Then $\widetilde\nabla^vV=\widehat\nabla^vV+\IIs_v(V,\cdot)=\nabla^vV+Q_v(V,\cdot)+\IIs_v(V,\cdot)=Q_v(V,\cdot)+\IIs_v(V,\cdot)$. Now if $X_1, X_2,X_3$ are arbitrary extensions of $u,w,z$, respectively, one has that $\widetilde\nabla^v_{X_i}V=Q_v(V,X_i)+\IIs_v(V,X_i)$, $i=1,2,3$. Substracting the Koszul formulas for $2g_v(\widetilde\nabla^v_{X_1}X_2,X_3)$ and 
%$2g_v(\nabla^v_{X_1}X_2,X_3)$ (recall \eqref{Koszul0}) and using the last identies we obtain part $(iii)$. 
 Part $(iii)$ is obtained directly by substracting the Koszul formulas for $\widetilde\nabla$ and $\nabla$.  %observing that by definition $\widetilde\nabla^v_XY-\nabla^v_XY=Q_v(X,Y)+\IIs_v(X,Y)$. \er
Parts $(i)$ and $(ii)$ are an immediate consequence of part $(iii)$ as by homogeneity the value of the Cartan tensor $C_v(u,w,z)$ is zero when one of the components coincides with $v$. Part $(iv)$ follows straightforwardly from part $(iii)$ and it can also be deduced analogously to the symmetry of $\IIs$. For part $(v)$, compute the derivative of $Q_{v+tu}(v+tu,v+tu)=0$ (which holds by part $(i)$) with respect to $t$ at $t=0$.
\end{proof}
\begin{lemma}\label{CartanQCor}
Let $(M,L)$ be a pseudo-Finsler manifold and $S$ a non-degenerate submanifold. For $v\in A\cap TS$ and $u,w,z\in T_{\pi(v)}S$,
\begin{enumerate}[(i)]
\item $g_v(Q_v(u,v),v) = g_v(Q_v(v,u),v)=0$.
\item $g_v(Q_v(u,w),v) =-g_v(Q_v(u,v),w) =-g_v(Q_v(v,u),w)=  C_v(u,w,\IIs_v(v,v))$.%\label{CartanQCor1}
\item The anisotropic  tensor $\partial^\nu Q$ is symmetric in the first two components and
\[g_v((\partial^\nu Q)_v(u,v,w),v) = g_v((\partial^\nu Q)_v(v,u,w),v) = 0.\]
\item  $(\nabla_w Q)_v$ is symmetric and  
\[g_v((\nabla_w Q)_v (u,v),v) =g_v((\nabla_w Q)_v (v,u),v)=0.\]
%\label{CartanQCor0}
\end{enumerate}
\end{lemma}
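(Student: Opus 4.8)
The plan is to deduce everything from Lemma \ref{CartanQ}, using the homogeneity properties of the Cartan tensor $C$ (it vanishes when one slot is $v$, and it is totally symmetric) and the fact that $\IIs_v$ takes values in $(T_{\pi(v)}S)^{\bot_{g_v}}$. For part $(i)$, I would start from Lemma \ref{CartanQ}$(ii)$, which gives $Q_v(u,v)=-\big(C^\flat_v(\II(v,v),u)\big)^{\top_{g_v}}$; pairing with $v\in T_{\pi(v)}S$ and using that $g_v$-projection onto $T_{\pi(v)}S$ does not change the pairing with a tangent vector, I get $g_v(Q_v(u,v),v)=-C_v(\IIs_v(v,v),u,v)$, which is zero since $C_v$ has a $v$ in the last slot. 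The statement for $Q_v(v,u)$ follows from symmetry of $Q$ (Lemma \ref{CartanQ}$(iv)$).

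For part $(ii)$, the same computation with an arbitrary $w\in T_{\pi(v)}S$ in place of $v$ gives $g_v(Q_v(u,v),w)=-C_v(\IIs_v(v,v),u,w)=-C_v(u,w,\IIs_v(v,v))$ by total symmetry of $C$, which yields the claimed identity $-g_v(Q_v(u,v),w)=C_v(u,w,\IIs_v(v,v))$, and again symmetry of $Q$ handles the $Q_v(v,u)$ version. For the remaining equality $g_v(Q_v(u,w),v)=-g_v(Q_v(u,v),w)$, I would use Lemma \ref{CartanQ}$(iii)$ with $z=v$: the three terms on the right-hand side all contain a Cartan tensor with a component equal to $v$ or equal to $Q_v(\cdot,v)+\IIs_v(\cdot,v)$, and I would substitute $Q_v(z,v)=Q_v(v,v)=0$ (Lemma \ref{CartanQ}$(i)$) and $\IIs_v(v,v)$, watching carefully which of the three Cartan terms survive after setting $z=v$; combined with part $(ii)$ this gives the identity. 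Alternatively, this equality is just the statement that $C_v(u,w,\IIs_v(v,v))$ is symmetric in $u,w$, which is immediate.

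For parts $(iii)$ and $(iv)$, I would differentiate the identities of Lemma \ref{CartanQ} and of parts $(i)$–$(ii)$ along radial/tangential directions. Symmetry of $\partial^\nu Q$ in its first two components follows by applying $\partial^\nu$ to the symmetry $Q_v(u,w)=Q_v(w,u)$ and using the definition \eqref{partialT}. For $g_v((\partial^\nu Q)_v(u,v,w),v)=0$: I would take the identity $g_v(Q_{v+tw}(u,v+tw),v+tw)$ — which we must first rewrite so that the argument of $Q$ in the second slot and the slot of $g$ are genuinely $v+tw$ — differentiate at $t=0$, and isolate $g_v((\partial^\nu Q)_v(u,v,w),v)$, with the other terms being $g_v(Q_v(u,w),v)$, $g_v(Q_v(u,v),w)$ and $(\partial^\nu g)_v(\dots)=2C_v(\dots)$ contributions, all of which are controlled by part $(ii)$ and the vanishing of $C_v$ on $v$; the terms should cancel. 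Finally, for part $(iv)$, symmetry of $(\nabla_w Q)_v$ follows from symmetry of $Q$ together with symmetry of $\partial^\nu Q$ via the defining formula \eqref{nablaTV2}, and the vanishing $g_v((\nabla_w Q)_v(u,v),v)=0$ follows by applying the tensor-derivation formula \eqref{nablaTV2} to the function $p\mapsto g_{V}(Q_V(U,V),V)$ (which vanishes identically by part $(i)$ along an admissible extension $V$ with, say, $\nabla^v V=0$ as in \eqref{nablaXV=0}), and expanding, using $\partial^\nu g=2C$, parts $(i)$–$(iii)$, and Lemma \ref{CartanQ}$(v)$ to kill the leftover vertical-derivative term $(\partial^\nu Q)_v(v,v,\cdot)=-2Q_v(v,\cdot)$.

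The main obstacle I anticipate is purely bookkeeping rather than conceptual: in parts $(iii)$ and $(iv)$ one has to be scrupulous about which vector is genuinely $v+tw$ versus a fixed $v$ before differentiating (the vertical derivative $\partial^\nu$ only sees the explicit subscript in $g_{v+tw}$ and $Q_{v+tw}$, per \eqref{partialT}), and about the extra $2C_v$ terms generated every time $\partial^\nu$ hits a $g$ or a $\nabla^V$. Choosing the extension $V$ with $\nabla^v V=0$ at the point in question, as permitted by \eqref{nablaXV=0}, removes one family of such terms and should make the cancellations transparent.
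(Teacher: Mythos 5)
Your proposal is correct and follows essentially the same route as the paper: parts $(i)$–$(ii)$ from Lemma \ref{CartanQ} and the symmetries of the Cartan tensor, part $(iii)$ by differentiating $g_{v+tw}(Q_{v+tw}(u,v+tw),v+tw)=0$ at $t=0$ and cancelling via part $(ii)$, and part $(iv)$ by expanding $w\bigl(g_V(Q_V(U,V),V)\bigr)$ with an extension satisfying $\nabla^vV=0$. The only superfluous ingredient is the appeal to Lemma \ref{CartanQ}$(v)$ in part $(iv)$: once $\nabla^vV=0$ is imposed, the vertical-derivative terms drop out automatically and that identity is not needed.
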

\begin{proof}
Parts $(i)$ and $(ii)$ follow from part $(iii)$ of Lemma \ref{CartanQ}, taking into account the properties of the Cartan tensor. The symmetry of $\partial^\nu Q$ in the first components follows from the 
symmetry of $Q$ and the definition of vertical derivative.  To prove that $g_v((\partial^\nu Q)_v(u,v,w),v) =  0$, observe that
\begin{align*}
\frac{\partial}{\partial t}  g_{v+tw}(Q_{v+tw}(u,v+tw),v+tw)  \left. \vphantom{\frac{a}{a}} \right\vert_{t=0}
=& \,\, 
2 C_v(Q_v(u,v),v,w) 
\\&+
g_v((\partial^\nu Q)_v(u,v,w),v)
+
g_v(Q_v(u,w),v)
\\&+
g_v(Q_v(u,v),w)
\end{align*}
and the left hand side is zero by part $(i)$.   The first term to the right hand side is zero by the properties of the Cartan tensor and the third and fourth terms cancel by part $(ii)$,  which gives the nullity of the first term in part $(iii)$. The nullity of the second term, $g_v((\partial^\nu Q)_v(v,u,w),v) = 0$, follows now  from the nullity of the first one and the mentioned symmetry of $\partial^\nu Q$.   The symmetry of $(\nabla_wQ)_v$
follows from the symmetry of $Q$, the symmetry of $\partial^\nu Q$ in the first two components and \eqref{nablaTV2}. 
To prove that $g_v((\nabla_w Q)_v (u,v),v) =0$ in part $(iii)$, choose  extensions $ V , U $ of $ v , u $ such that $ \nabla^v V = 0 $ (recall \eqref{nablaXV=0}). Then  using \eqref{gco} and \eqref{nablaTV2}, we have
\begin{equation*}% \label{UgQ}
w (g_V(Q_V(U,V),V))=
g_v((\nabla^v_w Q_v)(u,v),v)
+
g_v(Q_v(\nabla^v_wU,v),v),
%\\+
%g_v(Q_v(u,\nabla^v_w V),v)
%+2C_v(Q_v(u,v),v,\nabla^v_wV),
\end{equation*}
where we have discarded all the terms in $\nabla^vV$, since it is zero by hypothesis.
Part $(i)$ implies that the left hand side and the second term of the right hand side are zero,
%The third and fourth terms are
%zero because we have assumed that $\nabla^vV=0$, 
so the identity reduces to $g_v((\nabla^v_w Q_v)(u,v),v)=0$
as required. The other identity $g_v((\nabla^v_w Q_v)(v,u),v)=0$ follows from the last one using the symmetry of $(\nabla_wQ)_v$. 
\end{proof}
%For $ Y $ $ g_v $-orthogonal to $ P $, we can identify in \eqref{c0} the Leibniz rule $ \Botnabla^v_X (fY) = (\mathrm{d} \! f \cdot X) Y + f \Botnabla^v_X Y $ and define an orthogonal anisotropic connection $ \nabla^\bot $ as $ (\nabla^\bot)^v_X Y = \Botnabla^v_X Y $ for $ X \in \mathfrak{X}(P) $ and $ Y \in \overline{\mathfrak{X}}(P) $ $ g_v $-orthogonal to $ P $, allowing us to define the derivative of $\IIs$ as:
Let us define the derivative $\nabla\IIs$ of the second fundamental form $\IIs$ of $S$.  If $X,Y,Z\in {\mathfrak X}(S)$ and $v\in A\cap TS$, we define
\begin{equation} \label{c3}
(\nabla_X\IIs)_v(Y,Z):=(\widetilde\nabla^v_X(\IIs(Y,Z)))^{\bot_{g_v}}-\IIs_v(\nabla^v_XY,Z)-\IIs_v(Y,\nabla^v_XZ).
\end{equation}
This determines a map 	 $(\nabla_X\IIs)_v:{\mathfrak X}(S)\times {\mathfrak X}(S)\rightarrow (T_{\pi(v)}S)^{\bot_{g_v}}$ which is ${\mathcal F}(S)$-multi\-linear. % in $X,Y,Z$. %, namely, it is a tensor. 
Observe that $\widetilde\nabla^v_X(\IIs(Y,Z))$ is well-defined as it can be computed as a covariant derivative along an integral curve of $X$ (see \cite[\S 2.2]{Jav19}). Moreover, $v\mapsto \IIs_v(X,Y)$ is an anisotropic vector field. Indeed, it is well-defined for all $v\in A$ with $\pi(v)\in S$ and not only for those $v\in A\cap TS$. Its derivative is computed using an $A$-admissible extension $V$ of $v$ as 
\begin{equation}\label{secondc3} \widetilde\nabla^v_X(\IIs(Y,Z))=\widetilde\nabla^v_X(\IIs_V(Y,Z))-(\partial^\nu\IIs)_v(Y,Z,\widetilde\nabla^v_XV), 
\end{equation}
see \cite[Eqs. (8) and (9)]{Jav19}. 
Observe that the last term is well defined even though $\widetilde\nabla^v_XV$ is not necessarily tangent to $S$ as we have defined $\IIs_v$ for all $v$ in $\pi^{-1}(S)$.
%\br Observe that the last term could not be computed if $\IIs_v$ were not defined for all $v$ such that $\pi(v)\in S$ as $\widetilde\nabla^v_XV$ is not necessarily tangent to $S$. \er %Observe that the last term is well-defined because $\IIs_v$ is defined for all $v\in A$ with $\pi(v)$ and not only those tangent to $P$, because $\widetilde\nabla^v_XV$ is not necessarily tangent to $P$.
%$\IIs(Y,Z)$ is an anisotropic vector field, we may differentiate it as in \cite[Eq. (9)]{Jav19}: where $v=V(\pi(v))$,
%\[\widetilde\nabla^v_X(\IIs(Y,Z))=\widetilde\nabla^v_X(\IIs_V(Y,Z))-(\partial^\nu\IIs)_v(Y,Z,\widetilde\nabla^V_XV).\]

\subsection{Totally geodesic submanifolds}  A very important class of submanifolds is made up of those whose geodesics are also geodesics of the background manifold. 
\begin{defi}
Let $(M,L)$ be a pseudo-Finsler manifold and $S$ a non-degenerate submanifold of $(M,L)$. We will say that $S$ is {\em totally geodesic} if the geodesics of $(S,L|_S)$ are also geodesics of $(M,L)$.
\end{defi}
Observe that in \cite{Li2010}, the author calls this familiy ``weakly totally geodesic'', saving the term ``totally geodesic'' for the family with second fundamental form $\IIs=0$. As the meaning of having a second fundamental form null everywhere is not clear to us, we prefer to use totally geodesic for the family introduced above. 
%We recall that the anisotropic connection $\hat\nabla$ is said to be {\em distinguished} with respect to a pseudo-Finsler metric $(S,L|_S)$ if $\hat\nabla (g|_S)=\Qd$ with $\Qd$ a symmetric anisotropic tensor such that ${\Qd}_v(v,u,w)=0$ for all $v\in A\cap TS$ and $u,w\in T_{\pi(v)}S$. Distinguished connections are good to compute all the geometric invariants of the pseudo-Finsler manifold (see \cite[\S 3]{Jav19}).
\begin{prop}\label{TotGeo}
A non-degenerate submanifold $S$ of a pseudo-Finsler manifold $(M,L)$ is totally geodesic if and only if one of the following two equivalent conditions holds:
\begin{enumerate}[(i)]
\item $\IIs_v(v,v)=0$ for all $v\in A\cap TS$.
\item $\IIs_v(v,u)=0$ for all $v\in A\cap TS$ and $u\in T_{\pi(v)}S$.
\end{enumerate}
In this case, $Q=0$.
\end{prop}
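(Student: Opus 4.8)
The plan is to establish the chain of equivalences first and then deduce $Q=0$. I would begin with the direction ``totally geodesic $\Rightarrow$ (i)''. Recall that a curve $\gamma$ in $S$ is a geodesic of $(S,L|_S)$ precisely when $\widehat\nabla^{\dot\gamma}_{\dot\gamma}\dot\gamma=0$ (the induced connection $\widehat\nabla$ has the same geodesics as $\nabla$ since the difference tensor $Q$ satisfies $Q_v(v,v)=0$ by Lemma~\ref{CartanQ}(i)). By the decomposition \eqref{decomp}, for such a curve one has $\widetilde\nabla^{\dot\gamma}_{\dot\gamma}\dot\gamma=\widehat\nabla^{\dot\gamma}_{\dot\gamma}\dot\gamma+\IIs_{\dot\gamma}(\dot\gamma,\dot\gamma)=\IIs_{\dot\gamma}(\dot\gamma,\dot\gamma)$, so $\gamma$ is a geodesic of $(M,L)$ if and only if $\IIs_{\dot\gamma}(\dot\gamma,\dot\gamma)=0$ along it. Since through every $v\in A\cap TS$ there passes a geodesic of $(S,L|_S)$ with that initial velocity, this yields $\IIs_v(v,v)=0$ for all $v\in A\cap TS$, i.e. (i); and conversely, if (i) holds then every $(S,L|_S)$-geodesic has $\widetilde\nabla^{\dot\gamma}_{\dot\gamma}\dot\gamma=0$, hence is an $(M,L)$-geodesic, giving the reverse implication.

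Next I would prove the equivalence (i)$\iff$(ii). The implication (ii)$\Rightarrow$(i) is trivial (take $u=v$). For (i)$\Rightarrow$(ii), I would use the standard polarization trick adapted to the anisotropic setting: differentiate the identity $\IIs_{v+tu}(v+tu,v+tu)=0$ with respect to $t$ at $t=0$. Using the symmetry and bilinearity of $\IIs_w$ in its two vector arguments together with the vertical derivative $\partial^\nu\IIs$, one gets
\[
0=(\partial^\nu\IIs)_v(v,v,u)+2\,\IIs_v(v,u).
\]
So it remains to show $(\partial^\nu\IIs)_v(v,v,u)=0$ under hypothesis (i); but by homogeneity $\IIs_{\lambda v}=\lambda\,\IIs_v$ (as $\IIs$ is an anisotropic $(1,2)$-tensor and the Chern connection is positively homogeneous of degree zero), and differentiating $\IIs_{\lambda v}(v,v)=\lambda\,\IIs_v(v,v)=0$ shows the vertical derivative in the direction $v$ vanishes on $(v,v)$; then from the full symmetry one pins down $(\partial^\nu\IIs)_v(v,v,u)$, whence $\IIs_v(v,u)=0$. (Alternatively, and perhaps more cleanly, one can argue directly: fix a geodesic variation and differentiate, or invoke that $\IIs_v(v,u)$ for $u$ ranging over $T_{\pi(v)}S$ is recovered from the function $t\mapsto\IIs_{v+tu}(v+tu,v+tu)$, which is identically zero.)

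Finally, for $Q=0$: by Lemma~\ref{CartanQ}(ii), $Q_v(u,v)=-(C^\flat_v(\IIs_v(v,v),u))^{\top_{g_v}}$, which vanishes once $\IIs_v(v,v)=0$, so $Q_v(u,v)=0$ for all $u$. Feeding this and $\IIs_v(v,\cdot)=0$ (from (ii)) into the defining formula of Lemma~\ref{CartanQ}(iii),
\[
g_v(Q_v(u,w),z)=-C_v(Q_v(u,v)+\IIs_v(u,v),w,z)-C_v(Q_v(w,v)+\IIs_v(w,v),z,u)+C_v(Q_v(z,v)+\IIs_v(z,v),u,w),
\]
every term on the right-hand side is zero, so $g_v(Q_v(u,w),z)=0$ for all $z\in T_{\pi(v)}S$; non-degeneracy of $g_v$ on $T_{\pi(v)}S$ then gives $Q_v(u,w)=0$. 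I expect the only genuinely delicate point to be the polarization step (i)$\Rightarrow$(ii): one must handle the vertical-derivative term $(\partial^\nu\IIs)_v(v,v,u)$ carefully, since $\IIs$ is only positively homogeneous and $\partial^\nu\IIs$ is a nontrivial anisotropic object — everything else is a direct consequence of \eqref{decomp}, Lemma~\ref{CartanQ}, and non-degeneracy.
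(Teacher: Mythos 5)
Your geodesic characterization (totally geodesic $\iff$ (i)) and your derivation of $Q=0$ from parts (ii) and (iii) of Lemma \ref{CartanQ} both match the paper's argument. The genuine gap is exactly where you suspected it: in $(i)\Rightarrow(ii)$, showing $(\partial^\nu\IIs)_v(v,v,u)=0$. Your homogeneity argument does not work. Since the Chern connection and $g$ are positively homogeneous of degree $0$ in $v$, the second fundamental form satisfies $\IIs_{\lambda v}=\IIs_v$ (not $\IIs_{\lambda v}=\lambda\,\IIs_v$), and Euler's relation then gives $(\partial^\nu\IIs)_v(u,w,v)=0$, i.e.\ vanishing when the \emph{direction of the vertical derivative} (the third slot) equals $v$. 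What you need is $(\partial^\nu\IIs)_v(v,v,u)=0$, with $v$ in the two tensor slots and an arbitrary $u$ in the derivative slot. To pass from one to the other you would need $\partial^\nu\IIs$ to be symmetric in all three arguments, but it is only symmetric in the first two: unlike the Cartan tensor, $\IIs$ is not an iterated vertical derivative of a scalar, so there is no total symmetry to invoke. Your parenthetical ``alternative'' is circular: differentiating $t\mapsto\IIs_{v+tu}(v+tu,v+tu)$ recovers $(\partial^\nu\IIs)_v(v,v,u)+2\,\IIs_v(v,u)$, not $2\,\IIs_v(v,u)$, which is precisely the identity you already had.

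The paper closes this gap differently: taking the vertical derivative of the full decomposition $\widetilde\nabla^v_XY=\nabla^v_XY+Q_v(X,Y)+\IIs_v(X,Y)$ gives $\widetilde P_v(u,w,z)=P_v(u,w,z)+(\partial^\nu Q)_v(u,w,z)+(\partial^\nu\IIs)_v(u,w,z)$ for $v\in TS\cap A$ and $u,w,z\in T_{\pi(v)}S$. One then uses that $\widetilde P_v(v,v,u)=P_v(v,v,u)=0$ for the two Levi-Civita--Chern connections (a nontrivial property, see \cite[Lemma 3.5]{Jav19}), together with $(\partial^\nu Q)_v(v,v,u)=-2Q_v(v,u)=0$, which holds under hypothesis (i) by parts (ii) and (v) of Lemma \ref{CartanQ}. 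This forces $(\partial^\nu\IIs)_v(v,v,u)=0$ and hence $\IIs_v(v,u)=0$. Without some substitute for this step, your proof of $(i)\Rightarrow(ii)$ is incomplete.
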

\begin{proof}
First of all observe that given $v_0\in A\cap TS$, we can choose a geodesic vector field $V$ of $(S,L|_S)$ in some open subset $\Omega$ of $S$ which extends $v_0$. Then by \eqref{decomp} and \eqref{c2}, and using that $V$ is a geodesic vector field of $(S,L|_S)$ and part $(i)$ of Lemma \ref{CartanQCor},
\[\widetilde\nabla^V_VV=\nabla^V_VV+Q_V(V,V)+\IIs_V(V,V)=\IIs_V(V,V),\]
which easily implies that the geodesic $\gamma_{v_0}$ of $(S,L|_S)$ with initial velocity $v_0$ is also a geodesic of $(M,L)$ if and only if $\IIs_v(v,v)=0$ for all $v$ tangent to the geodesic $\gamma_{v_0}$. This implies straightforwardly that $S$ is totally geodesic if and only if part $(i)$ holds. Let us prove that $(i)$ implies $(ii)$ (the converse is trivial). Observe that deriving $\IIs_{v+tu}(v+tu,v+tu)=0$ with respect to $t$, it follows that
\[(\partial^\nu\IIs)_v(v,v,u)=-2\IIs_v(v,u).\]
To conclude, we only have to prove that $(\partial^\nu\IIs)_v(v,v,u)=0$.
  Again from \eqref{decomp} and \eqref{c2}, we have
\[\widetilde\nabla^v_XY=\nabla^v_XY+Q_v(X,Y)+\IIs_v(X,Y),\]
for vector fields $X,Y\in {\mathfrak X}(S)$ and $v\in A\cap TS$. Computing the vertical derivative of the above identity follows that
\[\widetilde P_v(u,w,z)=P_v(u,w,z)+(\partial^\nu Q)_v(u,w,z)+(\partial^\nu\IIs)_v(u,w,z)\]
for $v\in TS\cap A$ and $u,w,z\in T_{\pi(v)}S$, and observing that $\widetilde P_v(v,v,u)=P_v(v,v,u)=0$ (see for example \cite[Lemma 3.5]{Jav19}), and $(\partial^\nu Q)_v(v,v,u)=0$ by parts $(ii)$ and $(v)$ of Lemma \ref{CartanQ}, we conclude that $(\partial^\nu\IIs)_v(v,v,u)=0$ as required. The last statement about $Q$ follows from part $(ii)$ and $(iii)$ of Lemma \ref{CartanQ}.
\end{proof}
Observe that the equivalence between parts  $(i)$ and $(ii)$ in the last proposition can be also found in the proof of Theorem 4.4 in \cite{Li2010}.
\subsection{The Gauss and Codazzi Equations}
Considering a non-degenerate submanifold $S$ of a pseudo-Finsler manifold $(M,L)$ as in the previous section, recall that $\nabla$ is the Levi-Civita-Chern connection of $(S,L|_S)$, $\widetilde\nabla$ is the Levi-Civita-Chern connection of $(M,L)$ and $\widehat \nabla$ is the induced connection by $\widetilde\nabla$ on $S$ (see \eqref{decomp}). We will denote by $R, \widetilde{R}$ and $\widehat{R}$ the curvature tensors of $\nabla$, $\widetilde\nabla$ and $\widehat\nabla$, repectively
(see \eqref{Rv}). Moreover,  $P$ and $\widetilde P$ are the vertical derivatives of $\nabla$ and $\widetilde \nabla$, respectively (see \eqref{tensorP}), $Q$ is the difference tensor of $\widehat\nabla$ and $\nabla$ (see \eqref{c2}), $\IIs$ the second fundamental form of $S$ (see \eqref{decomp}) and $\nabla \IIs$ is the derivative of the second fundamental form as defined in \eqref{c3}.
\begin{theorem}
\label{J}
With the above notation, if $(M,L)$ is a pseudo-Finsler manifold and $S\subset M$ a non-degenerate submanifold, for $ v \in A\cap TS $, $u,w,z,b\in T_{\pi(v)}S$ and $n\in (T_{\pi(v)}S)^{\bot_{g_v}}$,
\begin{align}\label{GEQ}
\,g_{v}\left(\widetilde{R}_v(u,w)z,b\right)&=%g_{v}\left(\widehat{R}_v(X,Y)Z,W\right)
g_v(R_v(u,w)z,b)\nonumber \\&\quad-g_{v}\left(\IIs_v(w,z),\IIs_v(u,b)\right)+g_{v}\left(\IIs_v(u,z),\IIs_v(w,b)\right)\nonumber\\
&\quad -2C_v(\IIs_v(u,v),\IIs_v(w,z),b)+2C_v(\IIs_v(w,v),\IIs_v(u,z),b)\nonumber\\
&\quad+g_{v}\left(\widetilde{P}_v(u,z,\IIs_v(w,v))-\widetilde{P}_v(w,z,\IIs_v(u,v)),b\right)\nonumber\\
&\quad+g_{v}\left(P_v(u,z,Q_v(w,v))-P_v(w,z,Q_v(u,v)),b\right)\nonumber\\
&\quad +g_{v}\left((\nabla_uQ)_v(w,z)-(\nabla_wQ)_v(u,z),b\right)
\nonumber\\&\quad+g_{v}\left((\partial^\nu Q)_v(u,z,Q_v(w,v))-(\partial^\nu Q)_v(w,z,Q_v(u,v)),b\right)\nonumber\\&
\quad+g_{v}\left(Q_v(u,Q_v(w,z))-Q_v(w,Q_v(u,z)),b\right),\\
\label{Codazzi} \,g_{v}\left(\widetilde{R}_v(u,w)z,n\right)&=g_{v}\left((\nabla_u\IIs)_v(w,z),n\right)-g_{v}\left((\nabla_w\IIs)_v(u,z),n\right)\nonumber\\&\quad
+g_v(\IIs_v(u,Q_v(w,z)),n)-g_v(\IIs_v(w,Q_v(u,z)),n)
\nonumber\\&\quad+g_v((\partial^\nu\IIs)_v(w,z,\IIs_v(u,v)),n)-g_v((\partial^\nu\IIs)_v(u,z,\IIs_v(w,v)),n)\nonumber
\\&\quad+g_{v}\left(\widetilde{P}_v(u,z,\IIs_v(w,v)),n\right)-g_{v}\left(\widetilde{P}_v(w,z,\IIs_v(u,v)),n\right).
\end{align}  
\end{theorem}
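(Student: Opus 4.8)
The plan is to mimic the classical derivation of the Gauss and Codazzi equations, but carried out entirely in the anisotropic framework using the decompositions \eqref{decomp} and \eqref{c2} together with the bookkeeping formulas \eqref{nablaXT}--\eqref{nablaTV2} for anisotropic tensor derivations. The starting point is the curvature tensor $\widetilde R_v(u,w)z$ of the ambient Levi-Civita-Chern connection. To compute it I would fix $v\in A\cap TS$, choose an $A\cap TS$-admissible extension $V$ of $v$ on an open subset $\Omega\subset S$ with $\nabla^v V=0$ (possible by \eqref{nablaXV=0} applied to the \emph{intrinsic} connection $\nabla$ of $(S,L|_S)$), extend $u,w,z$ to vector fields $U,W,Z\in\mathfrak{X}(S)$, and then use \eqref{RvRV} to reduce $\widetilde R_v$ to the affine curvature $R^V$ of $\widetilde\nabla^V$ plus correction terms involving $\widetilde P$ and $\widetilde\nabla^V_X V$. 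Since $\nabla^v V=0$ but $\widetilde\nabla^v_X V$ need not vanish, the decomposition \eqref{decomp} evaluated at $V$ gives $\widetilde\nabla^v_X V=Q_v(X,v)+\IIs_v(X,v)$; this is precisely how the $\widetilde P$-correction terms with argument $\IIs_v(\cdot,v)$ and the $P$-, $(\partial^\nu Q)$-correction terms with argument $Q_v(\cdot,v)$ will enter the final formula.

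Next I would expand $\widetilde\nabla^V_U\widetilde\nabla^V_W Z$ by substituting $\widetilde\nabla^V_W Z=\widehat\nabla^V_W Z+\IIs_V(W,Z)=\nabla^V_W Z+Q_V(W,Z)+\IIs_V(W,Z)$ and then applying $\widetilde\nabla^V_U$ term by term, again decomposing each output into its $\top_{g_v}$ and $\bot_{g_v}$ parts. Here the derivative of the normal-valued term $\widetilde\nabla^v_U(\IIs_V(W,Z))$ must be handled with \eqref{secondc3}, which produces both the intended $(\nabla_U\IIs)_v$ piece (via \eqref{c3}) and a $(\partial^\nu\IIs)$-term with argument $\widetilde\nabla^v_U V=\IIs_v(u,v)+Q_v(u,v)$; the $Q_v(u,v)$ part of that argument sits in $T_{\pi(v)}S$ and, via the symmetry/orthogonality properties collected in Lemma \ref{CartanQCor}, will either cancel or be absorbed. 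Similarly the derivative of $Q_V(W,Z)$ is expanded via \eqref{nablaTV2} into $(\nabla_U Q)_v(w,z)$, a $(\partial^\nu Q)$-term with the same argument $\widetilde\nabla^v_U V$, and terms $Q_v(\nabla^v_U W,z)$ etc. Subtracting the symmetric $U\leftrightarrow W$ expression and the bracket term $\widetilde\nabla^v_{[U,W]}Z$ (using that $[U,W]$ is tangent to $S$, so that the bracket contributions assemble into $R_v(u,w)z+$ lower-order $Q$-terms) organizes everything into a tangential identity and a normal identity. Pairing the tangential part with $b\in T_{\pi(v)}S$ via $g_v$ gives \eqref{GEQ}; pairing the normal part with $n\in(T_{\pi(v)}S)^{\bot_{g_v}}$ gives \eqref{Codazzi}, where one notes that the purely tangential terms $\nabla^v_U W$, $Q_v(U,W)$, etc., drop out of the $g_v$-pairing with $n$.

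The main obstacle is the careful tracking of the vertical-derivative corrections: because the anisotropic connection is genuinely direction-dependent, every application of $\widetilde\nabla^V$ to an expression of the form $T_V(\cdots)$ generates an extra $(\partial^\nu T)_V(\cdots,\widetilde\nabla^V_X V)$ term, and $\widetilde\nabla^v_X V$ decomposes as $\IIs_v(X,v)+Q_v(X,v)$ rather than vanishing; one must verify that \emph{all} such terms either combine into the $\widetilde P$-, $P$-, $(\partial^\nu Q)$-, $(\partial^\nu\IIs)$-terms displayed in \eqref{GEQ}--\eqref{Codazzi} or cancel by the almost-$g$-compatibility \eqref{gco} and the identities $g_v(Q_v(u,v),v)=0$, $g_v(Q_v(u,w),v)=C_v(u,w,\IIs_v(v,v))$, symmetry of $(\nabla_w Q)_v$, etc., from Lemmas \ref{CartanQ} and \ref{CartanQCor}. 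A secondary subtlety is that the pairing must use $g_v$ and hence incurs Cartan-tensor terms whenever $\widetilde\nabla^V$ moves past $g_V(\cdot,\cdot)$: expanding $u(g_V(\widetilde\nabla^V_W Z,b))$ via \eqref{gco} produces the $2C_v(\IIs_v(u,v),\cdot,\cdot)$ terms in \eqref{GEQ}. Once the symmetrization in $U\leftrightarrow W$ is performed, the terms that survive should match the stated equations exactly; no step is conceptually deep, but the combinatorics of the correction terms is where the proof must be written with care.
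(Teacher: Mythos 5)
Your plan is workable and would indeed produce \eqref{GEQ}--\eqref{Codazzi}, but it is organized differently from the paper's proof, and the difference is exactly where your ``bookkeeping'' worry lives. The paper normalizes the extension by $\widehat\nabla^v V=0$ (the \emph{induced} connection), so that $\widetilde\nabla^v_X V=\IIs_v(X,v)$ is purely $g_v$-normal; the tangential part is then packaged as the curvature $\widehat R$ of $\widehat\nabla$, and \emph{all} the $Q$-terms of \eqref{GEQ} enter in one stroke through the quoted relation between $\widehat R$ and $R$ (\cite[Prop.~2.16]{Jav19}), while \eqref{gco} contributes only Cartan terms with argument $\IIs_v(\cdot,v)$. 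You normalize instead by $\nabla^v V=0$ (the \emph{intrinsic} connection), so $\widetilde\nabla^v_X V=\IIs_v(X,v)+Q_v(X,v)$ has a tangential component; this amounts to re-deriving the $\widehat R$--$R$ relation inline (a more self-contained proof, independent of \cite[Prop.~2.16]{Jav19}), but it generates extra intermediate terms that your sketch does not track precisely: \eqref{gco} now also produces Cartan terms of the form $2C_v(Q_v(\cdot,v),\IIs_v(\cdot,\cdot),b)$, \eqref{secondc3} produces $(\partial^\nu\IIs)_v(\cdot,\cdot,Q_v(\cdot,v))$ terms, and the correction in \eqref{RvRV} gives $\widetilde P_v(\cdot,\cdot,Q_v(\cdot,v))$, not the intrinsic $P_v$ and $(\partial^\nu Q)_v$ appearing in \eqref{GEQ}. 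To close the argument in your gauge you need two facts you do not name: the decomposition $\widetilde P_v=P_v+(\partial^\nu Q)_v+(\partial^\nu\IIs)_v$ on tangent arguments (obtained in the proof of Proposition \ref{TotGeo} by vertical differentiation of \eqref{decomp} and \eqref{c2}), whose $(\partial^\nu\IIs)$-residue cancels the corresponding terms coming from \eqref{secondc3} in the normal pairing because $P_v$ and $(\partial^\nu Q)_v$ are tangent-valued; and the identity $g_v\bigl((\partial^\nu\IIs)_v(w,z,q),b\bigr)=-2C_v(q,\IIs_v(w,z),b)$ for tangent $q,b$ (vertical derivative of $g_v(\IIs_v(w,z),b)=0$), which kills the extra Cartan terms in the tangential pairing. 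Lemma \ref{CartanQCor}, which you invoke for these cancellations, is not what does this job. With those two additions your route goes through and buys self-containedness at the price of heavier cancellations; the paper's choice $\widehat\nabla^v V=0$ is precisely what keeps its computation short.
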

\begin{proof}
Let $X,Y$ and $Z$ be vector fields tangent to $S$ which are extensions of $u,w,z$, respectively, and such that $[X,Y]=[X,Z]=[Y,Z]=0$, and let $V$ be an $A$-admissible extension of $v$ (in a neighborhood of $\pi(v)$) such that $\widehat \nabla^vV=0$ (recall \eqref{nablaXV=0}).  It follows that $\widetilde\nabla^v V=\IIs_v(\cdot,V)$.
Taking into account the last identity, let us compute $\widetilde{R}_v$ using the affine connection $\widetilde\nabla^V$ (see \eqref{RvRV}):
\begin{align}\label{p0}
\widetilde{R}_v(X,Y)Z=&\widetilde{R}^V(X,Y)Z|_{\pi(v)}-\widetilde{P}_v(Y,Z,\IIs_V(X,V))+\widetilde{P}_v(X,Z,\IIs_V(Y,V))\nonumber\\
=&\widetilde\nabla^v_X\nvm_YZ-\widetilde\nabla^v_Y\nvm_XZ-\widetilde{P}_v(Y,Z,\IIs_V(X,V))+\widetilde{P}_v(X,Z,\IIs_V(Y,V)).
\end{align}
Then,  by \eqref{decomp}, we have 
\[\widetilde\nabla^v_X\nvm_YZ=
\widetilde\nabla^v_X\nvi_YZ+\widetilde\nabla^v_X(\IIs_V(Y,Z))=\widehat\nabla^v_X\nvi_YZ+\IIs_v(X,\widehat\nabla^V_YZ)+\widetilde\nabla^v_X(\IIs_V(Y,Z))\] and analogously $\widetilde\nabla^v_Y\nvm_XZ=\widehat\nabla^v_Y\nvi_XZ+\II(Y,\widehat\nabla^V_XZ)+\widetilde\nabla^v_Y(\IIs_V(X,Z))$.
Combining these identities with 
\[\widehat{R}^V(X,Y)Z|_{\pi(v)}=\widehat\nabla^v_X\nvi_YZ-\widehat\nabla^v_Y\nvi_XZ=%\widehat\nabla^v_X\widehat\nabla_YZ-\widehat\nabla^v_Y\widehat\nabla_XZ=
\widehat{R}_v(X,Y)Z,\]
(for the last identity recall that $\widehat\nabla^v V=0$) gives us:
\begin{multline}\label{p2}
\widetilde{R}^V(X,Y)Z|_{\pi(v)}=\widehat{R}_v(X,Y)Z
+\IIs_v(X,\widehat\nabla^V_YZ)+\widetilde\nabla^v_X(\IIs_V(Y,Z))
\\-\II(Y,\widehat\nabla^V_XZ)-\widetilde\nabla^v_Y(\IIs_V(X,Z)).
\end{multline}
Let $B$ be an extension of $b$ tangent to $S$.
By  \eqref{gco} and as $g_{V}(\IIs_V(Y,Z),B)=0$ along $S$, we have
\begin{align}\label{p4}
g_{v}\left(\nvm_X (\IIs_V(Y,Z)),B\right)&=-g_{v}(\IIs_V(Y,Z),\nvm_XB)-2C_v(\widetilde\nabla^V_XV,\IIs_V(Y,Z),B)\nonumber\\&=-g_{v}\left(\IIs_V(Y,Z),\IIs_V(X,B)\right)-2C_v(\IIs_V(X,V),\IIs_V(Y,Z),B).
\end{align}
Similarly,
\begin{align}\label{p5}
g_{v}\left(\nvm_Y (\IIs_V(X,Z)),B\right)=-g_{v}\left(\IIs_V(X,Z),\IIs_V(Y,B)\right)-2C_v(\IIs_V(Y,V),\IIs_V(X,Z),B).
\end{align}
Putting together \eqref{p0}, \eqref{p2}, \eqref{p4} and \eqref{p5} and taking into account the expression of $\widehat R$ in terms of $R$ and $Q$ in \cite[Prop. 2.16]{Jav19} gives us \eqref{GEQ}.
In order to obtain \eqref{Codazzi}, %let $N$ be a vector field along $P$ which is $g_V$-orthogonal to $P$ and it is an extension of $n$. 
by \eqref{c3} and \eqref{secondc3},
\begin{align} \nonumber
g_v(\widetilde\nabla^V_X(\IIs_V(Y,Z)),n)
= \,\, &
g_v((\nabla^v_X(\IIs(Y,Z)))^{\bot_{g_v}},n)
+
g_v((\partial^\nu\IIs)_v(Y,Z,\widetilde\nabla^v_XV),n)
\\ \nonumber = \,\, &
g_v((\nabla_X\IIs)_v(Y,Z),n)
+
g_v(\IIs_v(\nabla^v_XY,Z),n)
\\ &
\qquad+
g_v(\IIs_v(Y,\nabla^v_XZ),n)	
+
g_v((\partial^\nu\IIs)_v(Y,Z,\widetilde\nabla^v_XV),n)
\label{p6}
\end{align}
and analogously
\begin{align} \nonumber
g_v(\widetilde\nabla^V_Y(\IIs_V(X,Z)),n)
= \,\, &
g_v((\nabla_Y\IIs)_v(X,Z),n)
+
g_v(\IIs_v(\nabla^v_YX,Z),n)
\\ &
\qquad+
g_v(\IIs_v(X,\nabla^v_YZ),n)
+
g_v((\partial^\nu\IIs)_v(X,Z,\widetilde\nabla^v_YV),n)
\label{p7}
\end{align}
From \eqref{p0}, \eqref{p2}, \eqref{p6}, \eqref{p7} and recalling that the Lie brackets of $X,Y,Z$ are assumed to be zero,$\nabla$ is torsion-free and $\widetilde\nabla^v_\cdot V=\IIs_v(\cdot,V)$, we conclude \eqref{Codazzi}.
\end{proof}
%{\color{blue} [To do: check that the Gauss and Codazzi formula coincide with those of Li2010, Li2012, LiJGP11)]}
\begin{corollary}\label{Curflag}
Let $(M,L)$ be a pseudo-Finsler manifold and $S$ a non-degenerate submanifold. Then the flag curvature of $(S,L|_S)$ with flagpole  $v\in A\cap TS$ and flag $u\in T_{\pi(v)}S$ with $L(v)g_v(u,u)-g_v(v,u)^2\not=0$ is given by
\begin{align}
K_v(u)=&\widetilde K_v(u)+\frac{g_v(\IIs_v(u,u),\IIs_v(v,v))-g_v(\IIs_v(v,u),\IIs_v(v,u))}{L(v)g_v(u,u)-g_v(v,u)^2}\nonumber \\
&+\frac{ g_v(\widetilde P_v(u,u,\IIs_v(v,v))-(\nabla_vQ)_v(u,u),v)+C_v(u,Q_v(v,u),\IIs_v(v,v))}{L(v)g_v(u,u)-g_v(v,u)^2}.\label{flagcurvature}
\end{align}
\end{corollary}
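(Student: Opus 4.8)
The plan is to specialize the Gauss equation \eqref{GEQ} to the flag curvature situation, that is, to put $w = b = v$ and $z = u$, and then to systematically kill terms using the properties of $Q$, $\IIs$, $\widetilde P$ and $P$ established in Lemma~\ref{CartanQ} and Lemma~\ref{CartanQCor}, together with the homogeneity properties of the Cartan tensor. First I would divide both sides of the resulting identity by $L(v)g_v(u,u) - g_v(v,u)^2$, so that the left-hand side becomes $\widetilde K_v(u)\,(L(v)g_v(u,u)-g_v(v,u)^2)$ divided by the same quantity — i.e. $\widetilde K_v(u)$ after using the definition of $\widetilde K$ — and the first curvature term on the right becomes $K_v(u)$ by the definition of the flag curvature of $(S,L|_S)$. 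The bulk of the work is then a bookkeeping exercise: go line by line through \eqref{GEQ} with $(u,w,z,b) = (u,v,u,v)$ and identify which terms survive.

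Concretely, in the line with $g_v(\IIs_v(w,z),\IIs_v(u,b)) - g_v(\IIs_v(u,z),\IIs_v(w,b))$ the substitution gives $g_v(\IIs_v(v,u),\IIs_v(u,v)) - g_v(\IIs_v(u,u),\IIs_v(v,v))$, which accounts (with a sign) for the second fraction in \eqref{flagcurvature}. In the line $-2C_v(\IIs_v(u,v),\IIs_v(w,z),b) + 2C_v(\IIs_v(w,v),\IIs_v(u,z),b)$, the second term has $\IIs_v(w,v) = \IIs_v(v,v)$ but is paired against $b = v$ in the Cartan tensor, hence vanishes by homogeneity; the first term becomes $-2C_v(\IIs_v(u,v),\IIs_v(v,u),v)$, again zero for the same reason. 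In the $\widetilde P$-line, $-\widetilde P_v(w,z,\IIs_v(u,v)) = -\widetilde P_v(v,u,\IIs_v(u,v))$ vanishes because $\widetilde P_v(v,\cdot,\cdot) = 0$ (the Chern connection is a Berwald-type distinguished connection, cf. the remark before Proposition~\ref{TotGeo}), leaving $g_v(\widetilde P_v(u,u,\IIs_v(v,v)),v)$, which is the first term in the numerator of the last fraction. For the $P$-line, $P_v(u,z,Q_v(w,v)) = P_v(u,u,Q_v(v,v)) = 0$ by part~(i) of Lemma~\ref{CartanQ} and $-P_v(w,z,Q_v(u,v)) = -P_v(v,u,Q_v(u,v)) = 0$ since $P_v(v,\cdot,\cdot)=0$; so this line disappears entirely. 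The $(\nabla Q)$-line gives $g_v((\nabla_u Q)_v(v,u) - (\nabla_v Q)_v(u,u),v)$; by part~(iv) of Lemma~\ref{CartanQCor} the term $g_v((\nabla_u Q)_v(v,u),v) = 0$, leaving $-g_v((\nabla_v Q)_v(u,u),v)$. The $(\partial^\nu Q)$-line gives $(\partial^\nu Q)_v(u,u,Q_v(v,v)) - (\partial^\nu Q)_v(v,u,Q_v(u,v))$; the first summand is zero because $Q_v(v,v) = 0$, and the second paired with $v$ is zero by part~(iii) of Lemma~\ref{CartanQCor} ($g_v((\partial^\nu Q)_v(v,u,\cdot),v) = 0$). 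Finally, the last line of \eqref{GEQ} gives $g_v(Q_v(u,Q_v(v,u)) - Q_v(v,Q_v(u,u)),v)$; using part~(ii) of Lemma~\ref{CartanQCor}, $g_v(Q_v(u,Q_v(v,u)),v) = C_v(u,Q_v(v,u),\IIs_v(v,v))$ and $g_v(Q_v(v,Q_v(u,u)),v) = g_v(Q_v(Q_v(u,u),v),v) = 0$ by part~(i) of the same lemma — this produces the Cartan term $+C_v(u,Q_v(v,u),\IIs_v(v,v))$ in the numerator.

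Assembling the surviving terms, moving the $K_v(u)$ term to the left and everything else to the right and dividing, yields exactly \eqref{flagcurvature}. The only subtlety worth double-checking is the sign and the symmetry conventions: one must be careful that $Q_v$ is symmetric (Lemma~\ref{CartanQ}(iv)), that $\IIs_v$ is symmetric, and that $\widetilde R_v(u,w)z$ with $(u,w,z) = (v,u,u)$ is what enters the definition of $\widetilde K_v(u)$ via $g_v(\widetilde R_v(v,u)u,v)$ — so one should apply the antisymmetry $\widetilde R_v(u,w) = -\widetilde R_v(w,u)$ and the pair symmetry $g_v(\widetilde R_v(u,w)z,b) = g_v(\widetilde R_v(z,b)u,w)$ of the Chern curvature to match $g_v(\widetilde R_v(u,v)u,v)$ with $g_v(\widetilde R_v(v,u)u,v)$. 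The main obstacle is purely organizational: keeping track of all ten or so terms and their signs without error, and verifying at each step that the vanishing invoked (homogeneity of $C$, the Berwald-type identity $\widetilde P_v(v,\cdot,\cdot) = P_v(v,\cdot,\cdot) = 0$, and parts (i)–(iv) of Lemma~\ref{CartanQCor}) genuinely applies in the configuration at hand. No genuinely new idea is needed beyond \eqref{GEQ} and the two lemmas.
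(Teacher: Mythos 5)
Your proposal is correct and follows essentially the same route as the paper: specialize the Gauss equation \eqref{GEQ} to the flagpole/flag configuration (the paper takes $(u,w,z,b)=(v,u,u,v)$, you take the antisymmetric counterpart $(u,v,u,v)$, which only flips an overall sign), then eliminate terms using the homogeneity of the Cartan tensor, the vanishing of $\widetilde P_v(v,\cdot,\cdot)$ and $P_v(v,\cdot,\cdot)$ for the Levi-Civita--Chern connections, and parts (i)--(iv) of Lemma~\ref{CartanQCor}, with the two surviving $Q$-terms identified exactly as in the paper. The term-by-term cancellations you list match the paper's, so no gap remains.
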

\begin{proof}
By \eqref{GEQ}, we have 
\begin{align}
\,g_{v}\left(\widetilde{R}_v(v,u)u,v\right)&=
g_v(R_v(v,u)u,v)\nonumber -g_{v}\left(\IIs_v(u,u),\IIs_v(v,v)\right)+g_{v}\left(\IIs_v(v,u),\IIs_v(u,v)\right)\nonumber\\
&\quad -2C_v(\IIs_v(v,v),\IIs_v(u,u),v)+2C_v(\IIs_v(u,v),\IIs_v(v,u),v)\nonumber\\
&\quad+g_{v}\left(\widetilde{P}_v(v,u,\IIs_v(u,v))-\widetilde{P}_v(u,u,\IIs_v(v,v)),v\right)\nonumber\\
&\quad+g_{v}\left(P_v(v,u,Q_v(u,v))-P_v(u,u,Q_v(v,v)),v\right)\nonumber\\
&\quad +g_{v}\left((\nabla_uQ)_v(u,u)-(\nabla_wQ)_v(v,u),v\right)
\nonumber\\&\quad+g_{v}\left((\partial^\nu Q)_v(v,u,Q_v(u,v))-(\partial^\nu Q)_v(u,u,Q_v(v,v)),v\right)\nonumber\\&
\quad+g_{v}\left(Q_v(v,Q_v(u,u))-Q_v(u,Q_v(v,u)),v\right).\label{firstaprox}
\end{align}
In the last identity many terms are zero. The terms in $C_v$ are zero because of the properties of Cartan tensor, the first term in $\widetilde P_v$ and the first term in $P_v$ are both  zero by \cite[Eq. (56)]{Jav19} since $\widetilde \nabla$ and $\nabla$ are the Levi-Civita-Chern connections of $(M,L)$ and $(S,L|_S)$, respectively. All the terms in $Q$ are zero by Lemma \ref{CartanQCor} except two, namely, $g_{v}\left((\nabla_uQ)_v(u,u),v\right)$ and $-g_v\left(Q_v(u,Q_v(v,u)),v\right)$. The last term coincides with $-C_v(u,Q_v(v,u),\IIs_v(v,v))$ by part $(ii)$ of Lemma \ref{CartanQCor}. Putting all this together, \eqref{firstaprox} becomes
\begin{align}
\,g_{v}\left(\widetilde{R}_v(v,u)u,v\right)&=
g_v(R_v(v,u)u,v)\nonumber-g_{v}\left(\IIs_v(u,u),\IIs_v(v,v)\right)+g_{v}\left(\IIs_v(v,u),\IIs_v(u,v)\right)\nonumber\\
&\quad+g_{v}\left(-\widetilde{P}_v(u,u,\IIs_v(v,v))+(\nabla_uQ)_v(u,u),v\right)-C_v(u,Q_v(v,u),v)\nonumber,
\end{align}
which by the definition of flag curvature leads to \eqref{flagcurvature}.
\end{proof}
With this expression of the flag curvature, we can reobtain \cite[Theorem 4.4]{Li2010}.
\begin{corollary}\label{CurTotGeo}
When $S$ is totally geodesic, its flag curvature coincides with the one of the background manifold.
\end{corollary}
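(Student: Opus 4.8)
The plan is to read the statement off directly from the flag curvature formula \eqref{flagcurvature} of Corollary \ref{Curflag}, feeding in the vanishing results for totally geodesic submanifolds collected in Proposition \ref{TotGeo}. Fix a flagpole $v\in A\cap TS$ and a flag $u\in T_{\pi(v)}S$ with $L(v)g_v(u,u)-g_v(v,u)^2\neq 0$. Since the fundamental tensor of $(S,L|_S)$ is the restriction of $g_v$, the nondegeneracy condition and the denominator $L(v)g_v(u,u)-g_v(v,u)^2$ are literally the same whether one works in $S$ or in $M$, so the asserted equality $K_v(u)=\widetilde K_v(u)$ amounts to showing that the two correction fractions in \eqref{flagcurvature} vanish.

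Concretely, I would invoke Proposition \ref{TotGeo}, which gives $\IIs_v(v,u)=0$ for all $v\in A\cap TS$ and $u\in T_{\pi(v)}S$ (in particular $\IIs_v(v,v)=0$), together with $Q\equiv 0$, and then run through the numerator terms of \eqref{flagcurvature} one at a time. In the first fraction, $g_v(\IIs_v(u,u),\IIs_v(v,v))=0$ because $\IIs_v(v,v)=0$, and $g_v(\IIs_v(v,u),\IIs_v(v,u))=0$ because $\IIs_v(v,u)=0$. In the second fraction, $\widetilde P_v(u,u,\IIs_v(v,v))=\widetilde P_v(u,u,0)=0$ by ${\mathcal F}(A)$-linearity of $\widetilde P$ in its last argument; the term $C_v(u,Q_v(v,u),\IIs_v(v,v))$ vanishes for either of the reasons $Q_v(v,u)=0$ or $\IIs_v(v,v)=0$; and $(\nabla_vQ)_v(u,u)=0$ because $Q\equiv 0$ forces $\nabla Q\equiv 0$, every term of the defining formula \eqref{nablaTV2} for $\nabla_wQ$ being built out of $Q_V$ and its vertical derivative. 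Hence both fractions are zero and \eqref{flagcurvature} collapses to $K_v(u)=\widetilde K_v(u)$.

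There is no genuine obstacle here; the corollary is an immediate consequence of \eqref{flagcurvature} once Proposition \ref{TotGeo} is available, and this is essentially how \cite[Theorem 4.4]{Li2010} is recovered. The only step meriting a word of care is the passage from $Q\equiv 0$ to $\nabla Q\equiv 0$, which is not automatic for an arbitrary anisotropic tensor derivation but holds here because \eqref{nablaTV2} expresses $(\nabla_wQ)_V$ entirely in terms of $Q_V$ and $(\partial^\nu Q)_V$, all of which vanish identically when $Q$ does.
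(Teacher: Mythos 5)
Your proposal is correct and follows exactly the paper's own argument: invoke Proposition \ref{TotGeo} to get $\IIs_v(v,u)=0$ and $Q=0$, and observe that every correction term in \eqref{flagcurvature} then vanishes. The extra care you take with the step $Q\equiv 0\Rightarrow\nabla Q\equiv 0$ is a valid (if implicit in the paper) observation, but the route is the same.
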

\begin{proof}
Being $S$ totally geodesic, by Proposition \ref{TotGeo} we know that $\IIs_v(v,u)=0$ for all $v\in TS\cap A$ and $u\in T_{\pi(v)}S$ and $Q=0$, so the statement follows from  \eqref{flagcurvature}.
\end{proof}
\section{Randers-Minkowski spaces}
%\subsection{Second fundamental forms in Randers manifolds}
%Randers metrics are of the form $ F = \omega+\sqrt{h} $ for some Riemannian $ h $ and a $ 1 $-form $ \omega = h(w,\cdot) $.
%\begin{proposition}
%Randers metrics $ v \mapsto h(w,v) + \sqrt{h(v,v)} $ are exactly those Finsler metrics whose indicatrices are that of the Riemannian product $ v \mapsto \langle v , v \rangle = \frac{h^2(w,v)-h(v,v)}{h(W,W)-1} $ offset by $ W = \frac{w}{h(w,w)-1} $.
%\end{proposition}
%Let $ \langle \cdot , \cdot \rangle $ the Euclidean product of $ \mathbb{R}^n $ and $ \Sigma $ the indicatrix of Randers metric $ F $ defined previously.
%We have $ \langle \frac{v}{F(v)}-W , \frac{v}{F(v)}-W \rangle = 1 $ or $ (1 - \langle W,W \rangle) F^2(v) + 2 \langle v,W \rangle F(v) - \langle W,W \rangle = 0 $ which gives
Let $\V$ be a vector space of dimension $n$ and recall that a {\em Minkowski norm} on $\V$ is a non-negative function $F:\V\rightarrow[0,+\infty)$ smooth away from $0$, which is positive homogeneous of degree 1 and such that, for every $v\in\V\setminus\{0\}$, its fundamental tensor $g_v$, defined
as in \eqref{fundtensor} for $L=F^2$, is positive definite.  We will say that the pair $(\V,F)$ is a {\em Minkowski space}. Let $h:\V\times\V\rightarrow \R$ be a positive definite scalar product on $\V$. If $W\in \V$ is a vector such that $h(W,W)<1$, then we can translate the indicatrix of $h$ with $W$. It turns out that 
the translated hypersurface as it is strongly convex and it contains the origin of $\V$ is the indicatrix of a Minkowski norm $F$ (see \cite[Theorem 2.14]{JavSan14}). The Minkowski norm $F$ is determined by the property
\[h(v/F(v)-W,v/F(v)-W)=1,\]
which is equivalent to a second degree polynomial equation in $F(v)$ and  it implies that 
\begin{equation}\label{zermelo}
F(v)
=
\frac{1}{\lW}\left(- h( v , W)
+
\sqrt{h( v , W)^2+ \ell(W)h(v , v)}\right),
%\quad \text{for some } W
\end{equation}
where $\lW=1-h(W,W)$. This family of Minkowski norms has been used to solve the Zermelo problem of navigation when the wind does not depend on time\footnote{When the wind is time-depending, the Zermelo problem can be studied using Finsler spacetimes (see \cite[\S 6.3.1]{JavSan20}).} \cite{BRS04,Sh03}. Moreover, it coincides with the family of Randers norms, which are constructed as follows. Given a positive definite scalar product $g$ on $\V$ and a one-form $\beta$ with $g$-norm less than one, then the function $R:\V\rightarrow \R$ given by
\begin{equation}\label{alphabet}
R(v)=\alpha(v)+\beta(v)
\end{equation}
for $v\in\V$, where $\alpha(v)=\sqrt{g(v,v)}$,  is a Minkowski norm which is said to be of {\em Randers type}. It is clear that a Zermelo norm as in \eqref{zermelo} is of Randers type. The converse is also true (see \cite[\S 1.3]{BRS04}). Indeed, if $\beta(v)=g(v,B)$, then the Zermelo data of $R$ is given by $h(u,w)=\ell(B)(g(u,w)-g(u,B)g(w,B))$, where $\ell(B)=1-g(B,B)$, and $W=-B/\ell(B)$ (see also \cite[Prop. 3.1]{BiJav11}).
In the following, we will say $(h,W)$ is the Zermelo data of the Randers norm $R$. Moreover, if $F$ is a Minkowski norm on $\V$ of Zermelo-Randers type, we will say the pair $(\V,F)$ is a {\em Randers-Minkowski space}.

From now on, given a Randers norm $F$, we will denote by $g$ and $C$ the fundamental and the Cartan tensors of $L=F^2$, respectively, and  by
$\Sigma=\{v\in \V: F(v)=1\}$, the indicatrix of $F$.

%up to isomorphism of $ ( \mathbb{R}^n , h ) $ to $ ( \mathbb{R}^n , \langle \cdot , \cdot \rangle ) $ for some vector field $ W $. $ ( \langle \cdot , \cdot \rangle , W ) $ is the Zermelo data.
%Let $ P \subset \mathbb{R}^n $ a non-degenerate submanifold and $ \Sigma $ the indicatrix of the Randers metric $ F $ defined above. %with Zermelo data $( \langle \cdot , \cdot \rangle ,W)$.
\begin{lemma} \label{gVProptoSigmaN}
Let $F$ be a Randers norm on $\V$ with Zermelo data $ (h, W) $. Given $v\in \V\setminus\{0\}$, define
\begin{equation}\label{mu}
\phi(v)=h(v/F(v)-W , v/F(v)).
\end{equation}
Then
\begin{enumerate}[(i)]
\item $ g_v(v/F(v),\cdot) = \frac{1}{\phi(v)} h( v/F(v)-W,\cdot) $.  Moreover, $T_{v/F(v)}\Sigma$ is the $g_v$-orthogonal space to $v$ and the $h$-orthogonal space to $v/F(v)-W$. 
\item $ g_v = \frac{1}{\phi(v)} h $ on $ T_{v/F(v)} \Sigma \times T_{v/F(v)} \Sigma $,
\item 	$g_v(v/F(v),W)=\frac{\phi(v)-1}{\phi(v)}$.
\end{enumerate}
%where $\Sigma=\{v\in \V: F(v)=1\}$.  
Furthermore, if   $u\in T_{v/F(v)}\Sigma$ (or equivalently $g_v(v,u)=0$),  then
	\begin{equation}\label{usefuleq}
	h(v, u)= \phi(v)^2 F(v) g_v(u,W).
	\end{equation}
\end{lemma}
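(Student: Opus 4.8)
### Plan of proof

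The plan is to establish the three enumerated identities by a unified computation based on differentiating the defining relation $h(v/F(v)-W,v/F(v)-W)=1$ of the indicatrix, and then deduce the final formula \eqref{usefuleq} as a corollary of parts $(i)$ and $(iii)$. First I would compute the vertical derivative of $F$: differentiating $\phi(v)^2$-free quantities is awkward, so instead I start from $L=F^2$ and use that $g_v(v,\cdot)=\tfrac12\,\partial^\nu L_v(\cdot)$, i.e. $g_v(v,z)=\tfrac12\frac{\partial}{\partial t}F(v+tz)^2|_{t=0}=F(v)\,\partial_zF$. Differentiating \eqref{zermelo}, or equivalently the implicit relation, in the direction $z$ gives $h(v/F(v)-W,\,z/F(v)-(v/F(v)^2)\partial_zF)=0$; solving for $\partial_zF$ and substituting yields exactly $g_v(v/F(v),z)=\frac{1}{\phi(v)}h(v/F(v)-W,z)$, which is part $(i)$. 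The statements about $T_{v/F(v)}\Sigma$ being simultaneously the $g_v$-orthogonal complement of $v$ and the $h$-orthogonal complement of $v/F(v)-W$ then follow immediately: $u$ is tangent to $\Sigma$ at $v/F(v)$ iff $\partial_uF=0$ iff $g_v(v,u)=0$ iff $h(v/F(v)-W,u)=0$.

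Next, for part $(ii)$, I would differentiate the identity in part $(i)$ once more, in a direction $u$ tangent to $\Sigma$ at $v/F(v)$. Since $\partial^\nu g$ relates to the Cartan tensor and the differentiation of $\phi$ and of $v/F(v)$ produces terms proportional to $v$ or to $v/F(v)-W$ that die against tangent vectors, what survives is $g_v(u,w)=\frac{1}{\phi(v)}h(u,w)$ for $u,w\in T_{v/F(v)}\Sigma$; here one uses that $\Sigma$ is a level set of both $F$ and (after translation) of $h$, so second derivatives of the two defining functions along $\Sigma$ agree up to the conformal factor. Part $(iii)$ is then just the special case $z=W$ of part $(i)$: $g_v(v/F(v),W)=\frac{1}{\phi(v)}h(v/F(v)-W,W)=\frac{1}{\phi(v)}\big(h(v/F(v),W)-h(W,W)\big)$, and one rewrites $h(v/F(v),W)=h(v/F(v)-W,W)+h(W,W)$; combining with $\phi(v)=h(v/F(v)-W,v/F(v))$ and expanding $h(v/F(v)-W,v/F(v)-W)=1$ gives $h(v/F(v)-W,W)=\phi(v)-1$, hence $g_v(v/F(v),W)=\frac{\phi(v)-1}{\phi(v)}$.

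Finally, for \eqref{usefuleq}, suppose $u\in T_{v/F(v)}\Sigma$, equivalently $g_v(v,u)=0$. Then by part $(i)$, $h(v/F(v)-W,u)=0$, so $h(v/F(v),u)=h(W,u)$, i.e. $h(v,u)=F(v)\,h(W,u)$. On the other hand I would apply part $(ii)$ to $u$ and a vector built from $W$: decompose $W$ according to $\V=\R v\oplus T_{v/F(v)}\Sigma$ (the $g_v$-orthogonal splitting), write $W=\lambda v/F(v)+W^{\top}$ with $W^{\top}$ tangent, so that $g_v(u,W)=g_v(u,W^{\top})=\frac{1}{\phi(v)}h(u,W^{\top})$ and similarly $h(u,W)=h(u,W^{\top})+\lambda\,h(u,v/F(v))=h(u,W^{\top})$ since $u$ is $h$-orthogonal to $v/F(v)-W$... — here I must be careful, because $u$ is $h$-orthogonal to $v/F(v)-W$, not to $v/F(v)$ alone. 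So instead: $h(u,W)=h(u,v/F(v))$ from $h(v/F(v)-W,u)=0$, and from part $(i)$, $h(u,v/F(v)-W) = \phi(v)\,g_v(v/F(v),u)\cdot\frac{1}{\,}$... the cleanest route is $g_v(u,W)=g_v(u,W-v/F(v))+g_v(u,v/F(v))=g_v(u,W-v/F(v))$ since $g_v(v,u)=0$; then apply part $(i)$ read backwards, $g_v(v/F(v),u)=0$ gives nothing new, so instead use part $(ii)$ isn't directly applicable as $W-v/F(v)$ need not be tangent. The honest computation: from part $(i)$, for any $z$, $h(v/F(v)-W,z)=\phi(v)g_v(v/F(v),z)=\phi(v)F(v)g_v(v,z)/F(v)^2$; apply with $z=u$ to get $0=0$, useless. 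Apply the conformality of part $(ii)$ to the $\Sigma$-tangential parts: write $W = a\,v + W'$ with $W'\in T_{v/F(v)}\Sigma$ and $a = g_v(v,W)/L(v)$; then $h(v,u) = F(v)h(W,u) = F(v)\big(a\,h(v,u) + h(W',u)\big)$, and $h(W',u)=\phi(v)g_v(W',u)=\phi(v)g_v(W,u)$ using part $(ii)$ and $g_v(v,u)=0$. Solving, $h(v,u)(1-F(v)a)=F(v)\phi(v)g_v(W,u)$, and from part $(iii)$ together with the definition of $a$ one computes $1-F(v)a = 1-g_v(v,W)/L(v)\cdot F(v) = 1 - g_v(v/F(v),W) = 1/\phi(v)$, yielding $h(v,u) = \phi(v)^2 F(v) g_v(u,W)$, which is \eqref{usefuleq}. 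The main obstacle is bookkeeping the orthogonal decompositions carefully — tracking whether a vector is $g_v$-orthogonal to $v$ versus $h$-orthogonal to $v/F(v)-W$, and extracting the factor $1-F(v)a=1/\phi(v)$ cleanly from part $(iii)$ — rather than any deep conceptual point.
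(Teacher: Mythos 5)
Your proposal is correct, but it follows a genuinely different route from the paper in two places. For part $(ii)$, the paper interprets $g_v|_{T_{v/F(v)}\Sigma\times T_{v/F(v)}\Sigma}$ and $h|_{T_{v/F(v)}\Sigma\times T_{v/F(v)}\Sigma}$ as second fundamental forms of $\Sigma$ with respect to the two transversals $-v/F(v)$ and $-v/F(v)+W$, and compares the resulting decompositions of $\widetilde\nabla_XY$; you instead differentiate the one-form identity of part $(i)$ in a $\Sigma$-tangent direction $u$, where the Cartan term $C_v(v,\cdot,u)$ vanishes by homogeneity and the $\partial_u\phi$-term is killed by $h(v/F(v)-W,w)=0$ (equivalently $g_v(v,w)=0$), which indeed yields $\phi(v)g_v(u,w)=h(u,w)$ --- a more computational but perfectly valid argument, whereas the paper's is more geometric. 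For \eqref{usefuleq}, the paper projects $v$ $h$-orthogonally onto $T_{v/F(v)}\Sigma$ via $v-h(v/F(v)-W,v)(v/F(v)-W)$ and applies $(ii)$ directly, never invoking $(iii)$; you decompose $W=av+W'$ $g_v$-orthogonally with $a=g_v(v,W)/L(v)$, apply $(ii)$ to $W'$, and use $(iii)$ to identify $1-F(v)a=1/\phi(v)$ --- the bookkeeping checks out and the constant comes out right. Part $(i)$ (implicit differentiation of $h(v/F(v)-W,v/F(v)-W)=1$ versus the paper's ``two one-forms with the same kernel'' argument, the factor fixed by evaluating at $v$) and part $(iii)$ are essentially the same idea in both treatments. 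One stylistic remark: your paragraph on \eqref{usefuleq} records several abandoned attempts before the final computation; in a clean writeup keep only the last argument, and state explicitly that $W'\in T_{v/F(v)}\Sigma$ because, by part $(i)$, that tangent space is exactly the $g_v$-orthogonal complement of $v$.
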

\begin{proof}
To check $(i)$, observe that $g_v(v/F(v),\cdot)$ and $\h(v/F(v)-W,\cdot)$ are two one-forms on $\V$ with the same kernel. Namely, the kernel of $g_v(v/F(v),\cdot)$ is the tangent space to the indicatrix of $F$  as $d_vF^2=2g_v(v,\cdot)$,  and the indicatrix of $F$ is the unit sphere of $h$ translated with $W$ at $v/F(v)$, while the kernel of $h(v/F(v)-W,\cdot ) $ is the tangent space at  $v/F(v)-W$  to the unit sphere of $h$. We know then that  $g_v(\frac{v}{F(v)},\cdot)=\mu(v)h(v/F(v)-W,\cdot) $ for a certain function $\mu:\V\rightarrow \R$. It follows that 
$g_v(v/F(v),v)=\mu(v)h(v/F(v)-W,v )$ and as $g_v(v/F(v),v)=F(v)$, we conclude that $\mu(v)=\frac{1}{\phi(v)}$, where $\phi$ is defined in \eqref{mu}.  The last statement of part $(i)$ is now straightforward. 
%By translation, $ g_v $-orthogonality to $ \hat{v} = \frac{v}{F(v)} \in \Sigma $ is equivalent to $ \langle \cdot , \cdot \rangle $-orthogonality to $ \hat{v}-W $, therefore $ g_v(\hat{v},\rho) = \langle \hat{v}-W,\rho \rangle = 0 $ for $ \rho \in T_{\hat{v}} \Sigma $ means $ g_v(\hat{v},\rho+\lambda \hat{v}) = \lambda
%= \frac{\langle \hat{v}-W,\rho+\lambda \hat{v} \rangle}{\langle \hat{v}-W,\hat{v} \rangle} $.
%\end{proof}
%\begin{lemma} \label{gProptoSigma}
%	
%	\end{lemma}
%\begin{proof}

For $(ii)$, recall that $g_v|_{T_{v/F(v)}\Sigma\times T_{v/F(v)}\Sigma}$ is the second fundamental form of $\Sigma$ at $\tfrac{v}{F(v)}$ with respect to $-\tfrac{v}{F(v)}$ (see for example \cite[Eq. (2.5)]{JavSan14}) and $h|_{T_{v/F(v)}\Sigma\times T_{v/F(v)}\Sigma}$ is the second fundamental form of $\Sigma$ at $v/F(v)$ with respect to $-v/F(v)+W$, which is an $h$-unit vector  (recall that, by part $(i)$, $-v/F(v)+W$ is $h$-orthogonal to $T_{v/F(v)}\Sigma$).  If $\widetilde \nabla$ is the natural affine connection of $\V$ and $u,w\in T_{v/F(v)}\Sigma$, then the second fundamental form $\sigma^\xi$ with respect to the transverse vector $\xi$  is defined as follows. Let $X,Y$ be two vector fields which are extensions of $u,w$, respectively. Then the identity
\[(\widetilde\nabla_XY)_{v/F(v)}= \sigma(u,w) \xi+P_\sigma\]
with $P_\sigma$ tangent to $T_{v/F(v)}\Sigma$
determines $\sigma$ by the uniqueness of the decomposition. In the case of  $g_v|_{T_{v/F(v)}\Sigma\times T_{v/F(v)}\Sigma}$ and $h|_{T_{v/F(v)}\Sigma\times T_{v/F(v)}\Sigma}$, this implies that 
\begin{equation}\label{seconfundforms}
g_v(u,w)\left(-\frac{v}{F(v)}\right)+P_g=h(u,w)\left(-\frac{v}{F(v)}+W\right)+P_h
\end{equation}
with $P_g$ and $P_h$ tangent to $T_{v/F(v)}\Sigma$. Moreover, as $-v/F(v)+W$ is $h$-orthogonal to $T_{v/F(v)}\Sigma$ by part $(i)$,  $-v/F(v)=\phi(v)(-v/F(v)+W)+P$, with $P$ tangent to $T_{v/F(v)}\Sigma$, which implies that
\begin{align*}
g_v(u,w)\left(-\tfrac{v}{F(v)}\right)+P_g=&g_v(u,w)\left(\phi(v)(-v/F(v)+W)+P\right)+P_g\\
=&\phi(v)g_v(u,w)\left(-\tfrac{v}{F(v)}+W\right)+g_v(u,w)P+P_g.
\end{align*}
By the uniqueness of the decomposition and \eqref{seconfundforms},  the identity $h(u,w)=\phi(v)g_v(u,w)$ follows, which is equivalent to part $(ii)$.

%It is well known that they must be proportional and the constant of proportionality is given by  $1/\phi(v)$, which concludes.
%$ g_v $ is the second fundamental form $ \sigma^\Sigma_{\hat{v}} $ of $ \Sigma $ along $ \hat{v} = \smash{\frac{v}{F(v)}} $, $ g_v $-orthonormal to $ T_{\hat{v}} \Sigma $ as $ g_v(\hat{v},\cdot) = \mathrm{d}_{\hat{v}} F $ is precisely $ 0 $ on $ T_{\hat{v}} \Sigma $.
%For $ x,y \in T_{\hat{v}} \Sigma $ with extensions $ X $ and $ Y $, $ \sigma^\Sigma_{\hat{v}} $ is defined by $ \widetilde\nabla^{\hat{v}}_X Y = \rho + \sigma^\Sigma_{\hat{v}} (x,y) {\hat{v}} $ where $ \rho \in T_{\hat{v}} \Sigma $.
%We have $ g_v(x,y) = \frac{1}{2} ( XYF^2 ({\hat{v}}) - (\widetilde\nabla^{\hat{v}}_X Y)F^2) = - \sigma^\Sigma_v(x,y) $, recalling that $ \frac{1}{2} wF^2 (v) = \frac{1}{2} \mathrm{d}_v F^2 \cdot w = g_v(v,w) $ and $ g_v(\hat{v},\hat{v}) = 1 $.
%Applying $ \langle \hat{v}-W,\cdot \rangle $ to $ {\hat{v}} = \rho' + \lambda (\hat{v}-W) $ where $ \rho' \in T_{\hat{v}} \Sigma $ gives $ \lambda = \langle \hat{v}-W,{\hat{v}} \rangle $. By Lemma \ref{sigmaProptoSigma}, $ - \sigma^\Sigma_v(x,y) = \frac{1}{\langle \hat{v}-W,{\hat{v}} \rangle} \langle x,y \rangle $.
%	\end{proof}
%	\begin{lemma}\label{usefulLemma}
%	
%	\end{lemma}
%	\begin{proof}

For $(iii)$, observe that, by part $(i)$, $g_v(v/F(v),W)=\frac{1}{\phi(v)}h(v/F(v)-W,W).$
As $h(v/F(v)-W,v/F(v)-W)=1$, the conclusion follows.

Finally, for \eqref{usefuleq},	recall that, by part $(i)$, being $g_v$-orthogonal to $v$ is equivalent to being $h$-orthogonal to $v/F(v)-W$, namely, the vector space $T_{v/F(v)}\Sigma$ coincides with the $h$-orthogonal vectors to  $v/F(v)-W$. As 
	 $h(v/F(v)-W,v/F(v)-W)=1$, using part $(ii)$, it follows that 
\begin{align*}
h(v,u)
=&
h( v-h(v/F(v)-W,v)(v/F(v)-W),u)
\\=&
\phi(v)g_v(v-F(v)\phi(v) (v/F(v)-W),u) 
\\=&
-\phi(v)^2F(v) g_v(v/F(v)-W,u)=\phi(v)^2F(v) g_v(W,u).
\end{align*}
	\end{proof}
	\begin{prop} \label{CartangSigma}
Let $F:\V\rightarrow\R$ be a Randers norm with Zermelo data $(h,W)$.
Then for $ v \in \V\setminus\{0\}$ and $ u,w,z \in T_{v/F(v)}\Sigma $,
\begin{align*}
C_v(u,w,z)
= & \,\,
-
\frac{1}{2\phi(v)^2F(v)^2}(h(u,w) h( z,v ) + h(w,z) h( u,v ) +h(z,u) h( w,v ) ).
%-
%\frac{1}{\lambda(v)F(v)}(g_v(\bar x,\bar y)g_v(\bar z,W)+g_v(\bar y,\bar z)g_v(\bar x,W)+g_v(\bar z,\bar x)g_v(\bar y,W) \rangle)
\end{align*}
\end{prop}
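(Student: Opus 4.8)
The plan is to compute directly from the definition $C_v(u,w,z)=\tfrac12\,\partial_t g_{v+tz}(u,w)\big|_{t=0}$ and to reduce everything to the Riemannian data by means of Lemma~\ref{gVProptoSigmaN}. Since $g$ is $0$-homogeneous, $g_{v+tz}=g_{p(t)}$ where $p(t):=(v+tz)/F(v+tz)$ is a curve on the indicatrix $\Sigma$ with $p(0)=v/F(v)$. First I would compute its velocity: as $z\in T_{v/F(v)}\Sigma$, part~$(i)$ of Lemma~\ref{gVProptoSigmaN} gives $g_v(v,z)=0$, hence $\partial_t F(v+tz)\big|_{t=0}=g_v(v,z)/F(v)=0$ (recall $d_v(F^2)=2g_v(v,\cdot)$) and therefore $p'(0)=z/F(v)$.

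Next I would use the $g_{p}$-orthogonal splitting along $\Sigma$. Writing $\ell_p:=g_p(p,\cdot)$ for $p\in\Sigma$, one has $g_p(p,p)=F(p)^2=1$ and $\ker\ell_p=T_p\Sigma$ by part~$(i)$, so each vector decomposes as $u=u^{\top_t}+\ell_{p(t)}(u)\,p(t)$ with $u^{\top_t}\in T_{p(t)}\Sigma$, and similarly for $w$. Because $\ell_{p(t)}(u)\,p(t)$ is $g_{p(t)}$-orthogonal to $T_{p(t)}\Sigma$, the cross terms vanish, and part~$(ii)$ of Lemma~\ref{gVProptoSigmaN} turns the tangential part into a multiple of $h$:
\[
g_{p(t)}(u,w)=\frac{1}{\phi(p(t))}\,h(u^{\top_t},w^{\top_t})+\ell_{p(t)}(u)\,\ell_{p(t)}(w).
\]
Since $u,w\in T_{v/F(v)}\Sigma$, we have $\ell_{p(0)}(u)=\ell_{p(0)}(w)=0$, so the last product is $O(t^2)$ and irrelevant for the derivative at $t=0$; thus $2C_v(u,w,z)=\partial_t\big(\phi(p(t))^{-1}h(u^{\top_t},w^{\top_t})\big)\big|_{t=0}$.

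It then remains to differentiate a single real-valued function. From part~$(i)$, $\ell_{p(t)}(u)=\phi(p(t))^{-1}h(p(t)-W,u)$; since $h(v/F(v)-W,u)=0$, this gives $\partial_t\ell_{p(t)}(u)\big|_{t=0}=h(z,u)/(\phi(v)F(v))$, hence $\partial_t u^{\top_t}\big|_{t=0}=-\,h(z,u)\,v\,/\,(\phi(v)F(v)^2)$ and likewise for $w$. For the conformal factor, $\phi(p(t))=h(p(t)-W,p(t))$ together with $h(v/F(v)-W,z)=0$ yields $\partial_t\phi(p(t))\big|_{t=0}=h(z,v)/F(v)^2$. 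Feeding these into the product rule, and using $u^{\top_0}=u$, $w^{\top_0}=w$, one obtains
\[
2C_v(u,w,z)=-\frac{1}{\phi(v)^2F(v)^2}\bigl(h(u,w)h(z,v)+h(w,z)h(u,v)+h(z,u)h(w,v)\bigr),
\]
which is the claimed formula.

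The only genuinely delicate point is recognising that the components of $u$ and $w$ normal to $\Sigma$ along the curve can be discarded: they vanish at $t=0$ exactly because $u,w$ are tangent there, so their pairing contributes only at order $t^2$. Once this is understood, the whole computation is a chain-rule exercise on the two Riemannian objects $\phi$ and $h$, with all the required first derivatives supplied by Lemma~\ref{gVProptoSigmaN} and by the vanishing of $\partial_t F(v+tz)$ at $t=0$; the repeatedly used identities $h(v/F(v)-W,u)=h(v/F(v)-W,w)=h(v/F(v)-W,z)=0$ are precisely the statement that $u,w,z\in T_{v/F(v)}\Sigma$, the $h$-orthogonal complement of $v/F(v)-W$.
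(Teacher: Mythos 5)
Your argument is correct and follows essentially the same route as the paper: your projection $u^{\top_t}=u-\ell_{p(t)}(u)\,p(t)$ is exactly the paper's $\varphi_t(u)$, you discard the normal contribution for the same reason (a product of two functions vanishing at $t=0$), and you compute the same three derivatives of $F$, $\phi$ and the projection before applying part $(ii)$ of Lemma~\ref{gVProptoSigmaN} and the product rule. The only cosmetic difference is that you obtain $\partial_t\ell_{p(t)}(u)\big|_{t=0}$ by differentiating the formula of part $(i)$ directly, while the paper gets it from the chain rule on $g$ plus the homogeneity of the Cartan tensor.
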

\begin{proof}
Recall that $C_v(u,w,z)=\frac{1}{2} \frac{\partial}{\partial t} g_{v+tz}(u,w)|_{t=0}$. To apply Lemma \ref{gVProptoSigmaN}, let us define 
\begin{equation}\label{help-1}
\varphi_t(u)=u-g_{v+tz}(u,(v+tz)/F(v+tz))\frac{v+tz}{F(v+tz)},
\end{equation}
and observe that $\varphi_t(u)\in T_{(v+tz)/F(v+tz)}\Sigma$ for all $t\in\R$. 
%Using part $(i)$ of Lemma \eqref{gVProptoSigmaN}, it follows that 
%\begin{equation}\label{help0}
%g_{v+tz}(u,(v+tz)/F(v+tz))=\frac{1}{\phi(v+tz)}h(u,(v+tz)/F(v+tz)-W).
%\end{equation}
Moreover, using that $d_vF^2=2 g_v(v,\cdot)$,
\begin{equation}\label{help1}
\frac{\partial}{\partial t}F(v+tz)|_{t=0}=\frac{1}{2F(v)} dF^2_v(z)=\frac{1}{F(v)}g_v(v,z)=0,
\end{equation}
and using the above identity,
\begin{align}\nonumber
\frac{\partial}{\partial t}g_{v+tz}(u,(v+tz)/F(v+tz))|_{t=0}&=2 C_v(u,v/F(v),z)+g_v(u,z/F(v))\\&= \frac{1}{F(v)}g_v(u,z).\label{help3}
%-\frac{h(z,v/F(v))}{\phi(v)^2}h(u,v/F(v)-W)\\+\frac{1}{\phi(v)}h(u,z/F(v))=\frac{1}{\phi(v)F(v)}h(u,z). \label{help3}
\end{align} 
On the other hand, using  again \eqref{help1}, 
\begin{equation}\label{help2}
\frac{\partial}{\partial t}\phi(v+tz)|_{t=0}=h(\tfrac{z}{F(v)},\tfrac{v}{F(v)})+h(\tfrac{v}{F(v)}-W,\tfrac{z}{F(v)})=\tfrac{1}{F(v)^2}h(z,v),
\end{equation}
because $z$ is $h$-orthogonal to $\frac{v}{F(v)}-W$ (recall part $(i)$ of Lemma \ref{gVProptoSigmaN}). 
Finally, from \eqref{help1} and \eqref{help3} and part $(ii)$ of Lemma \ref{gVProptoSigmaN},
\begin{equation}\label{help4}
\frac{\partial}{\partial t}\varphi_t(u)|_{t=0}=-\frac{1}{F(v)^2}g_v(u,z)v =-\frac{1}{\phi(v)F(v)^2}h(u,z)v.
\end{equation} 
 Analogously,
\begin{equation}\label{help5}
\frac{\partial}{\partial t}\varphi_t(w)|_{t=0}=-\frac{1}{\phi(v)F(v)^2}h(w,z)v.
\end{equation} 
Let us compute the Cartan tensor, taking into account \eqref{help-1},
\begin{align*}\nonumber
C_v(u,w,z)=&\frac{1}{2} \frac{\partial}{\partial t} g_{v+tz}(u,w)|_{t=0}\\
=&\frac{1}{2} \frac{\partial}{\partial t} g_{v+tz}(\varphi_t(u),\varphi_t(w))|_{t=0}\\
&+\frac{1}{2} \frac{\partial}{\partial t} g_{v+tz}(u,(v+tz)/F(v+tz))g_{v+tz}(w,(v+tz)/F(v+tz)))|_{t=0}.
\end{align*}
Observe that the second term is the derivative of a product of two functions which are zero in $t=0$. Then its value is zero in $t=0$. As a consequence, and using part $(ii)$ of Lemma \eqref{gVProptoSigmaN}, and then
\eqref{help2}, \eqref{help4} and \eqref{help5},
\begin{align*}\nonumber
C_v(u,w,z)=&\frac{1}{2} \frac{\partial}{\partial t} g_{v+tz}(\varphi_t(u),\varphi_t(w))|_{t=0}=\frac{1}{2} \frac{\partial}{\partial t} \frac{1}{\phi(v+tz)}h(\varphi_t(u),\varphi_t(w))|_{t=0}\\
=&-\frac{1}{2\phi(v)^2F(v)^2}h(z,v)h(u,w)-\frac{1}{2\phi(v)^2F(v)^2}h(u,z)h(v,w)\\&-\frac{1}{2\phi(v)^2F(v)^2}h(w,z)h(v,u),
\end{align*}
as desired.
\end{proof}
\subsection{Submanifolds of a Randers-Minkowski space}	
In this section, we will consider a submanifold $S$ of a Randers-Minkowski space $(\V,F)$ with Zermelo data $(h,W)$. Our main goal is to express all the Randers geometric invariants of $S$ in terms of the invariants with respect to $h$. Let us begin by obtaining the Zermelo data of the induced metric on $S$.  We will denote by $F|_S$ the restriction $F|_{TS}:TS\rightarrow \R$,  which is a Finsler metric on $S$, namely, $\br F|_S\er^2$ is a  pseudo-Finsler metric defined in the whole tangent bundle and with positive definite fundamental tensor. It is well-known that the Levi-Civita-Chern connection of $(\V,F)$ coincides with the Levi-Civita connection of $(\V,h)$. Indeed, this is true for any Minkowski norm $F$, not necessarily of Randers type.  In the following, we will use the superindices $\top_p$ and $\bot_p$ to denote the $h$-projection to $T_{p}S$ and its $h$-orthogonal space, respectively.  Moreover,   $W^\top$ will denote a tangent vector field to $S$ such that $(W^\top)_p=W^{\top_p}$ and $W^\bot$ will denote an $h$-orthogonal vector field along $S$ with an analogous convention. 
\begin{prop}\label{ZermSub}
Let $(\V, F)$ be a Randers-Minkowski space with Zermelo data $(h,W)$, and $S\subset \V$, a submanifold. Then the Zermelo data of $(S,F|_S)$ is given by $(\frac{1}{1-h(W^{\bot},W^{\bot})}h,W^{\top})$.
\end{prop}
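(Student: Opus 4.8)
The plan is to characterize the Zermelo data $(h_S, W_S)$ of $(S, F|_S)$ directly from its geometric meaning: the indicatrix $\Sigma_S$ of $F|_S$ inside $T_pS$ must be the $h_S$-unit sphere translated by $W_S$. Since $F|_S$ is simply the restriction of $F$ to $TS$, the indicatrix $\Sigma_S$ at $p$ is $\Sigma \cap T_pS$, where $\Sigma$ is the indicatrix of $F$. Recall that $\Sigma$ is the $h$-unit sphere centered at $W$; so I must understand what the slice of an ellipsoid by a linear subspace through the origin looks like, and recover its Zermelo data.

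First I would work at a fixed point $p \in S$ and set $\Pi = T_pS$, a linear subspace of $\V$. Write $W = W^{\top_p} + W^{\bot_p}$ according to the $h$-orthogonal decomposition $\V = \Pi \oplus \Pi^{\bot_p}$. The set $\Sigma \cap \Pi$ consists of those $x \in \Pi$ with $h(x - W, x - W) = 1$, i.e. $h(x - W^{\top_p}, x - W^{\top_p}) + h(W^{\bot_p}, W^{\bot_p}) = 1$ (using that $x - W^{\top_p} \in \Pi$ is $h$-orthogonal to $W^{\bot_p}$). Hence $\Sigma \cap \Pi = \{x \in \Pi : h_\Pi(x - W^{\top_p}, x - W^{\top_p}) = 1 - h(W^{\bot_p}, W^{\bot_p})\}$, where $h_\Pi$ denotes the restriction of $h$ to $\Pi$. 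Dividing the quadratic form by the constant $1 - h(W^{\bot_p}, W^{\bot_p})$, this is exactly the unit sphere of the scalar product $\frac{1}{1 - h(W^{\bot_p}, W^{\bot_p})} h_\Pi$ centered at $W^{\top_p}$. By the characterization of Zermelo data recalled before \eqref{zermelo} — an ellipsoid containing the origin in its bounded domain determines a unique pair (scalar product, vector) as "sphere translated by the vector" — this identifies the Zermelo data of $F|_S$ at $p$ as $\left(\frac{1}{1 - h(W^{\bot_p}, W^{\bot_p})} h_\Pi, W^{\top_p}\right)$.

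To complete the argument I would check two things. First, that $W^{\top_p}$ indeed has norm less than one with respect to the new metric $\frac{1}{1 - h(W^{\bot_p}, W^{\bot_p})} h_\Pi$: this amounts to $h(W^{\top_p}, W^{\top_p}) < 1 - h(W^{\bot_p}, W^{\bot_p})$, i.e. $h(W,W) < 1$, which holds by hypothesis on $W$; this also guarantees the constant $1 - h(W^{\bot_p}, W^{\bot_p})$ is positive so the scalar product is positive definite. Second, that these pointwise assignments assemble into the smooth tensor field $\frac{1}{1 - h(W^\bot, W^\bot)} h$ and vector field $W^\top$ on $S$, which is immediate since $W^\top$ and $W^\bot$ are smooth along $S$ by the convention fixed just before the proposition, and $h|_{TS \times TS}$ is smooth.

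The only subtle point — and the step I would flag as requiring care rather than difficulty — is invoking uniqueness of the Zermelo data: one must be sure that the translated-sphere representation of an ellipsoid through the origin is unique, so that the pair read off from the slice genuinely is \emph{the} Zermelo data of $F|_S$ and not merely \emph{a} pair producing the same Randers norm. This is precisely the content of the correspondence between Randers norms and Zermelo data recalled in the paragraph containing \eqref{zermelo} (and \cite[Theorem 2.14]{JavSan14}, \cite[\S 1.3]{BRS04}), so it can simply be cited. Everything else is the elementary computation of slicing a quadric, and no anisotropic machinery is needed here.
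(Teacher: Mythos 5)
Your proposal is correct and follows essentially the same route as the paper: both decompose $W=W^{\top}+W^{\bot}$, use $h$-orthogonality to rewrite the indicatrix condition $h(x-W,x-W)=1$ for $x\in T_pS$ as $\frac{1}{1-h(W^{\bot},W^{\bot})}h(x-W^{\top},x-W^{\top})=1$, and read off the Zermelo data from the translated-sphere description. Your additional checks (positivity of $1-h(W^{\bot},W^{\bot})$, the bound $h(W^{\top},W^{\top})<1-h(W^{\bot},W^{\bot})$, and uniqueness of the Zermelo representation) are details the paper leaves implicit but do not change the argument.
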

\begin{proof}
Given $v\in TS\setminus \bf 0$, we know that $F(v)$ is determined by $h(\tfrac{v}{F(v)}-W,\tfrac{v}{F(v)}-W)=1$. As $W$ is not necessarily tangent to $S$, the last identity does not allow us to obtain the Zermelo data of $(S,F|_S)$. Recall that  $W=W^{\top_{\pi(v)}}+W^{\bot_{\pi(v)}}$ is the decomposition in the tangent and $h$-orthogonal part  to $T_{\pi(v)}S$. Then
\[ h(\tfrac{v}{F(v)}-W^{\top_{\pi(v)}},\tfrac{v}{F(v)}-W^{\top_{\pi(v)}})+h(W^{\bot_{\pi(v)}},W^{\bot_{\pi(v)}})=1,\]
which implies that
\[\frac{1}{1-h(W^{\bot_{\pi(v)}},W^{\bot_{\pi(v)}})}h(\tfrac{v}{F(v)}-W^{\top_{\pi(v)}},\tfrac{v}{F(v)}-W^{\top_{\pi(v)}})=1,\]
namely, the indicatrix of the induced metric $F|_S$ is the displacement of the indicatrix of $\frac{1}{1-h(W^{\bot_{\pi(v)}},W^{\bot_{\pi(v)}})}h$ with $W^{\top_{\pi(v)}}$. This is equivalent to having as Zermelo data  $(\frac{1}{1-h(W^\bot,W^\bot)}h,W^\top)$.
\end{proof}
In the following, given $v\in\V\setminus\{0\}$, we will denote by $\WSigP $ the $h$-projection of $W$ into $T_{\pi(v)}S\cap T_{v/F(v)}\Sigma$. 
\begin{lemma} \label{IIvII}
Let $(\V, F)$ be a Randers-Minkowski space with Zermelo data $(h,W)$, and $S\subset \V$, a submanifold. If $\IIs'$ is the second fundamental form with respect to $h$, its second fundamental form $ \IIs $ with respect to $F$  (defined in \eqref{decomp}) is given by
\begin{equation}\label{SSFrel}
\IIs_v(u,w)=\IIs'(u,w)+\frac{1}{\phi(v)}h( \IIs'(u,w),W) \left( \smash{\tfrac{v}{F(v)}}-\WSigP \right),
\end{equation}
for $ v \in \V\setminus\{0\} $ and $ u,w \in T_{\pi(v)} S $. % where $ W^{\top_{g_v}} $ is the $ g_v $-projection onto $ T_{\pi(v)} P $.
\end{lemma}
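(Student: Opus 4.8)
The statement compares two second fundamental forms of the \emph{same} submanifold $S$, one computed with the Chern connection $\widetilde\nabla$ of $(\V,F)$ and one with the Levi-Civita connection $\widetilde\nabla'$ of $(\V,h)$. Since $\widetilde\nabla$ and $\widetilde\nabla'$ are connections on the flat vector space $\V$ and (as recalled just before the lemma) \emph{coincide} as the standard flat connection on $\V$, the covariant derivative $\widetilde\nabla^v_XY$ does not depend on $v$ at all and equals the ordinary directional derivative. Thus for $X,Y\in\mathfrak X(S)$ the full ambient derivative $\widetilde\nabla_XY$ is a single vector field along $S$, and the two second fundamental forms are merely its two \emph{different projections}: $\IIs'(u,w)=(\widetilde\nabla_XY)^{\bot_p}$ is the $h$-orthogonal projection onto $(T_pS)^{\bot_h}$, while $\IIs_v(u,w)=(\widetilde\nabla_XY)^{\bot_{g_v}}$ is the $g_v$-orthogonal projection onto $(T_pS)^{\bot_{g_v}}$. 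So the lemma is purely an exercise in linear algebra at the point $p=\pi(v)$: rewrite one projection in terms of the other using the explicit relation between $g_v$ and $h$ provided by Lemma~\ref{gVProptoSigmaN}.

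\textbf{Key steps.} First I would fix $v$, set $p=\pi(v)$, and write $N:=\widetilde\nabla^v_XY=\widetilde\nabla'_XY$, so that $N=\widehat\nabla'(u,w)+\IIs'(u,w)$ is the $h$-decomposition, with $\widehat\nabla'(u,w)\in T_pS$ and $\IIs'(u,w)\in(T_pS)^{\bot_h}$. The goal is to compute $N^{\top_{g_v}}$ and $N^{\bot_{g_v}}$. The essential geometric input is the structure of $(T_pS)^{\bot_{g_v}}$: by part $(i)$ of Lemma~\ref{gVProptoSigmaN}, $g_v(v/F(v),\cdot)=\frac1{\phi(v)}h(v/F(v)-W,\cdot)$, and on $T_{v/F(v)}\Sigma$ one has $g_v=\frac1{\phi(v)}h$ by part $(ii)$. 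Since $v\in T_pS$, the $g_v$-orthogonal complement of $T_pS$ inside $T_p\V$ can be produced from $(T_pS)^{\bot_h}$ by a shear: a vector $n'\in(T_pS)^{\bot_h}$ is generally \emph{not} $g_v$-orthogonal to $T_pS$ (it fails to be $g_v$-orthogonal to $v$), so one must correct it by subtracting a multiple of $v/F(v)$ (equivalently, replace it by its component in $T_{v/F(v)}\Sigma$ direction plus a $v$-direction term). Concretely, I expect to show that for $n'\in (T_pS)^{\bot_h}$ the vector
\[
n'\;-\;\frac{1}{\phi(v)}\,h(n',W)\Bigl(\tfrac{v}{F(v)}-\WSigP\Bigr)
\]
lies in $(T_pS)^{\bot_{g_v}}$, by checking it is $g_v$-orthogonal both to $v$ (using part $(i)$ and the identity $g_v(v/F(v),W)=\frac{\phi(v)-1}{\phi(v)}$ from part $(iii)$, plus $g_v(v,\WSigP)=0$ since $\WSigP\in T_{v/F(v)}\Sigma$) and to every $u\in T_pS$ (here the correction term's tangential part $-\frac1{\phi(v)}h(n',W)(-\WSigP)$ must exactly cancel $g_v(n',u)$ for $u\in T_pS\cap T_{v/F(v)}\Sigma$; one uses $g_v(n',u)=\frac1{\phi(v)}h(n',u)$ or, more to the point, \eqref{usefuleq}-type manipulations relating $h(n',u)$ to $g_v(n',\cdot)$). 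Applying this with $n'=\IIs'(u,w)$ and observing that $\widehat\nabla'(u,w)+\frac1{\phi(v)}h(\IIs'(u,w),W)(\frac{v}{F(v)}-\WSigP)$ is tangent to $S$ (its non-obvious part, $-\WSigP$, is tangent by construction, and $v/F(v)\in T_pS$), the uniqueness of the $g_v$-decomposition $N=N^{\top_{g_v}}+N^{\bot_{g_v}}$ forces $\IIs_v(u,w)=N^{\bot_{g_v}}$ to be exactly the displayed right-hand side of \eqref{SSFrel}.

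\textbf{Main obstacle.} The only delicate point is verifying that the proposed expression is genuinely $g_v$-orthogonal to \emph{all} of $T_pS$, not just to $v$. For a tangent vector $u$ decomposed as $u=\lambda\, v/F(v)+u_\Sigma$ with $u_\Sigma\in T_pS\cap T_{v/F(v)}\Sigma$, the pairing $g_v(n',u)$ splits into a $v$-part, handled by parts $(i)$ and $(iii)$ of Lemma~\ref{gVProptoSigmaN}, and a $\Sigma$-part $g_v(n',u_\Sigma)$. For the latter one cannot directly use part $(ii)$ since $n'$ need not lie in $T_{v/F(v)}\Sigma$, so the clean route is to first project $n'$ onto $T_{v/F(v)}\Sigma$ (subtracting a $v/F(v)$-multiple determined by part $(i)$), then invoke part $(ii)$, and keep careful track of how the $h(n',W)$ coefficient arises — this is where the specific combination $\frac{v}{F(v)}-\WSigP$ and the factor $\frac1{\phi(v)}$ appear. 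I expect this to be a short but bookkeeping-heavy computation; everything else (existence/uniqueness of the decompositions, $v$-independence of $\widetilde\nabla$, multilinearity) is immediate from the earlier sections.
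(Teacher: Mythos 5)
Your overall strategy is the paper's: since the Chern connection of a Minkowski space is the flat connection of $\V$ (equal to the Levi-Civita connection of $h$), $\IIs_v(u,w)$ is the $g_v$-orthogonal projection of the single ambient derivative, so $\IIs_v(u,w)=\IIs'(u,w)-\IIs'(u,w)^{\top_{g_v}}$, and everything reduces to pointwise linear algebra via Lemma \ref{gVProptoSigmaN}; the paper carries out exactly this computation in a $g_v$-orthonormal basis $\{v/F(v),e_2,\ldots,e_r\}$ of $T_{\pi(v)}S$ with $e_i\in T_{v/F(v)}\Sigma$. However, your central displayed claim has the wrong sign, and the verification you propose would fail as stated. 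Write $n'=\IIs'(u,w)$ and $c=\frac{1}{\phi(v)}h(n',W)$. By part $(i)$ of Lemma \ref{gVProptoSigmaN} and $h(n',v)=0$ one gets $g_v(n',v)=\frac{F(v)}{\phi(v)}h(\tfrac{v}{F(v)}-W,n')=-\frac{F(v)}{\phi(v)}h(n',W)$, while $g_v(\tfrac{v}{F(v)},v)=F(v)$ and $g_v(\WSigP,v)=0$; hence $g_v\bigl(n'-c(\tfrac{v}{F(v)}-\WSigP),v\bigr)=-\tfrac{2F(v)}{\phi(v)}h(n',W)$, which is nonzero whenever $h(\IIs'(u,w),W)\neq 0$. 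The vector that lies in $(T_{\pi(v)}S)^{\bot_{g_v}}$ is $n'+c(\tfrac{v}{F(v)}-\WSigP)$, and correspondingly the $g_v$-tangent part of the decomposition is the $h$-tangent part \emph{minus} $c(\tfrac{v}{F(v)}-\WSigP)$, not plus. Note also the internal inconsistency: if your displayed claim were true, uniqueness of the $g_v$-decomposition would force $\IIs_v(u,w)=\IIs'(u,w)-c(\tfrac{v}{F(v)}-\WSigP)$, i.e.\ the opposite sign to \eqref{SSFrel}, so it could not yield the stated formula.

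Once the sign is corrected, your outline closes and is essentially the paper's proof: $g_v(n',\tfrac{v}{F(v)})=-\frac{1}{\phi(v)}h(n',W)$ as above; for the $\Sigma$-directions, observe that $n'+h(n',W)(\tfrac{v}{F(v)}-W)\in T_{v/F(v)}\Sigma$ (this is the projection step you anticipated) and use part $(ii)$ of Lemma \ref{gVProptoSigmaN}, \eqref{usefuleq} and $h(\tfrac{v}{F(v)},e_i)=h(W,e_i)$ to get $g_v(n',e_i)=\frac{1}{\phi(v)^2}h(n',W)h(W,e_i)$; since $h(e_i,e_i)=\phi(v)$, summing gives $\sum_{i\geq 2} g_v(n',e_i)e_i=\frac{1}{\phi(v)}h(n',W)\WSigP$, whence $\IIs'(u,w)^{\top_{g_v}}=-\frac{1}{\phi(v)}h(\IIs'(u,w),W)\bigl(\tfrac{v}{F(v)}-\WSigP\bigr)$ and \eqref{SSFrel} follows.
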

\begin{proof}
%Let us denote with the superindes $\top_{g_v} $ and $ \bot_{g_v} $ the $ g_v $-projections to $ T_{\pi(v)} S$ and $ (T_{\pi(v)} S)^{\bot_{g_v}}$, respectively. 
Let $ X,Y \in \mathfrak X(S) $ be extensions of $ u,w\in T_{\pi(v)}S$, respectively, and observe that $ \IIs_v(u,w)
%= \IIs_v(X,Y)
= (\tilde\nabla^v_XY)^{\bot_{g_v}}
%= ((\nabla^E_XY)_{\pi(v)} + \IIs'(X,Y)|_{\pi(v)})^{\perp_{g_v}}
%%{\color{blue} [Have we defined $ \nabla^E $ so far?]}
= (\IIs'(u,w))^{\bot_{g_v}}
= \IIs'(u,w) - \IIs'(u,w)^{\top_{g_v}} $. Let $\lbrace \smash{\tfrac{v}{F(v)}},e_2,e_2,\ldots,e_{r} \rbrace$ be a $ g_v $-orthonormal basis of $ T_{\pi(v)} S $. In such a case $ e_2,\ldots,e_{r} \in T_{\tfrac{v}{F(v)}} \Sigma $. Then
\begin{equation}\label{step0}
\IIs'(u,w)^{\top_{g_v}}
= g_v(\IIs'(u,w),\smash{\tfrac{v}{F(v)}}) \smash{\tfrac{v}{F(v)}}
+ \sum_{i=2}^{r} g_v(\IIs'(u,w),e_i) e_i.
\end{equation}
%As $ \IIs'(u,w) - g_v(\IIs'(u,w),\smash{\tfrac{v}{F(v)}}) \smash{\tfrac{v}{F(v)}} = \IIs'(u,w)^{\top_{g_v}} \in T_{\smash{\tfrac{v}{F(v)}}} \Sigma$, 
%Using part $(ii)$ of Lemma \ref{gVProptoSigmaN}, we deduce that
%\begin{equation}\label{step1}
%g_v(\IIs'(u,w),e_i)
%%= g_v(\IIs'(x,y) - g_v(\IIs'(x,y),\smash{\tfrac{v}{F(v)}}) \smash{\tfrac{v}{F(v)}},e_i)
%= g_v(\IIs'(u,w)^{\top_{g_v}},e_i)
%%= \frac{1}{\lambda(v)} \langle \IIs'(x,y)-g_v(\IIs'(x,y),\smash{\tfrac{v}{F(v)}}) \smash{\tfrac{v}{F(v)}},e_i\rangle,
%= \tfrac{1}{\phi(v)} h( \IIs'(u,w)^{\top_{g_v}},e_i).
%\end{equation}
%where $ \lambda(v) = \langle \tfrac{v}{F(v)}-W,\tfrac{v}{F(v)} \rangle $. 
From part $(i)$ of  Lemma \ref{gVProptoSigmaN} and using that $\IIs'(u,w)$ is $h$-orthogonal to $ T_{\pi(v)} S$,
\begin{equation}\label{intereq}
g_v(\IIs'(u,w),\tfrac{v}{F(v)})
= \tfrac{1}{\phi(v)} h( \IIs'(u,w),\tfrac{v}{F(v)}-W)
= -\tfrac{1}{\phi(v)} h( \IIs'(u,w),W).
\end{equation}
Now observing that $\IIs'(u,w)+h(\IIs'(u,w),W) (v/F(v)-W)$ is tangent to $T_{v/F(v)}\Sigma$ because it is $h$-orthogonal to $v/F(v)-W$ (recall part $(i)$ of Lemma \ref{gVProptoSigmaN}), and using part $(ii)$ of Lemma \ref{gVProptoSigmaN}, it follows that
\begin{align*}
g_v(\IIs'(u,w),e_i)=&g_v(\IIs'(u,w)+h(\IIs'(u,w),W) (v/F(v)-W),e_i)\\&-h(\IIs'(u,w),W)g_v(v/F(v)-W,e_i)\\
=& \frac{1}{\phi(v)}h(\IIs'(u,w)+h(\IIs'(u,w),W) (v/F(v)-W),e_i)\\
&+h(\IIs'(u,w),W)g_v(W,e_i).
\end{align*}
Moreover, $\IIs'(u,w)$ is $h$-orthogonal to $T_{\pi(v)}S$, by definition and $v/F(v)-W$ is $h$-orthogonal to $T_{v/F(v)}\Sigma$. Therefore, both vectors are $h$-orthogonal to $e_i$, and then using also \eqref{usefuleq} and the fact that $h(v/F(v),e_i)=h(W,e_i)$ (because $v/F(v)-W$ is $h$-orthogonal to $e_i$),
\begin{align}\nonumber
g_v(\IIs'(u,w),e_i)=h(\IIs'(u,w),W)g_v(W,e_i)&=\frac{1}{F(v)\phi(v)^2}h(\IIs'(u,w),W)h(v,e_i)\\
&=\frac{1}{\phi(v)^2}h(\IIs'(u,w),W)h(W,e_i).\label{gII}
\end{align}
Using \eqref{intereq} and \eqref{gII} in \eqref{step0}, and taking into account that $e_2,\ldots,e_r$ is an $h$-orthogonal basis of $T_{v/F(v)}\Sigma\cap T_{\pi(v)}S$, with $h(e_i,e_i)=\phi(v)$ (recall part $(ii)$ of Lemma \ref{gVProptoSigmaN}), we obtain
\begin{align*}
\IIs'(u,w)^{\top_{g_v}}&=-\tfrac{1}{\phi(v)} h( \IIs'(u,w),W)\smash{\tfrac{v}{F(v)}}+\sum_{i=2}^r\frac{1}{\phi(v)^2}h( \IIs'(u,w),W)h(W,e_i)e_i\\
&=-\tfrac{1}{\phi(v)} h( \IIs'(u,w),W)(\smash{\tfrac{v}{F(v)}}-\WSigP).
\end{align*}
As we have seen above that $\IIs_v(u,w)=\IIs'(u,w)-\IIs'(u,w)^{\top_{g_v}}$, \eqref{SSFrel} follows.
\end{proof}

\begin{lemma} \label{tedious01}
Let $(\V, F)$ be a Randers-Minkowski space with Zermelo data $(h,W)$, and $S\subset \V$, a  submanifold. Given $v\in\V\setminus\{0\}$ and $ u \in T_{\pi(v)} S $ such that $ g_v(u,v) = 0 $, if $\IIs_v$ is the second fundamental form of $(S,F|_S)$ with respect to $v$ and $Q$ is the difference tensor between the induced and Levi-Civita-Chern connection of $S$, then
\begin{align} 
\label{tedious01eq2}
C_v(u,u,\IIs_v(v,v))
= & \,\,
-
\frac{1}{2F(v)^2\phi(v)}g_v(u,u) h( \IIs_v(v,v),v),\\
\label{tedious01eq1}
C_v(u,Q_v(u,v),\IIs_v(v,v))
= & \,\,
-
\frac{1}{4F(v)^4\phi(v)^2}g_v(u,u) h( \IIs_v(v,v),v )^2.
\end{align}
\end{lemma}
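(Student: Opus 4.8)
The plan is to compute both expressions by first reducing the Cartan tensor to its $h$-expression via Proposition \ref{CartangSigma}, then exploiting the fact that the flag direction $u$ lies in the indicatrix tangent space $T_{v/F(v)}\Sigma$ (because $g_v(u,v)=0$, by part $(i)$ of Lemma \ref{gVProptoSigmaN}). The subtlety is that $\IIs_v(v,v)$ and $Q_v(u,v)$ are \emph{not} in general tangent to $T_{v/F(v)}\Sigma$, so Proposition \ref{CartangSigma} cannot be applied directly with those vectors as arguments. To handle this, I would decompose each of them into its $g_v$-radial part (a multiple of $v/F(v)$) and its $T_{v/F(v)}\Sigma$-part, observing that the radial components contribute nothing to $C_v$ by homogeneity of the Cartan tensor ($C_v(v,\cdot,\cdot)=0$). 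Concretely, write $\IIs_v(v,v) = g_v(\IIs_v(v,v),v/F(v))\,v/F(v) + n_\Sigma$ with $n_\Sigma\in T_{v/F(v)}\Sigma$, so that $C_v(u,u,\IIs_v(v,v)) = C_v(u,u,n_\Sigma)$, and similarly for the other entries.

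For \eqref{tedious01eq2}: apply Proposition \ref{CartangSigma} to $C_v(u,u,n_\Sigma)$, which gives
\[
C_v(u,u,n_\Sigma) = -\frac{1}{2\phi(v)^2F(v)^2}\bigl(2 h(u,u)h(n_\Sigma,v)+h(n_\Sigma,u)h(u,v)\bigr).
\]
Now $h(u,v) = 0$: indeed $u\in T_{v/F(v)}\Sigma$ means $u$ is $h$-orthogonal to $v/F(v)-W$, and by \eqref{usefuleq} $h(v,u)=\phi(v)^2F(v)g_v(u,W)$, so I would need $g_v(u,W)=0$ — this is \emph{not} automatic, so instead I would use the cleaner route: $h(v,u)=\phi(v)^2F(v)g_v(W,u)$ and reorganize, but the key simplification is that only the $h(u,u)h(n_\Sigma,v)$ term survives the symmetry-plus-orthogonality bookkeeping. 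Then I replace $h(u,u)=\phi(v)g_v(u,u)$ (part $(ii)$ of Lemma \ref{gVProptoSigmaN}) and $h(n_\Sigma,v)=h(\IIs_v(v,v),v)$ — the latter because $v/F(v)$ and $v$ differ by a scalar and $h(v/F(v),v)$ times the radial coefficient is exactly what is subtracted, so after a short check $h(n_\Sigma,v)=h(\IIs_v(v,v),v) - (\text{radial coeff})h(v/F(v),v)F(v)$; I expect the radial term to vanish or to be absorbed, yielding $-\tfrac{1}{2F(v)^2\phi(v)}g_v(u,u)h(\IIs_v(v,v),v)$.

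For \eqref{tedious01eq1}: the strategy is the same, but now I first use part $(ii)$ of Lemma \ref{CartanQCor} to rewrite $C_v(u,Q_v(u,v),\IIs_v(v,v))$. Actually the efficient path is to apply \eqref{tedious01eq2} with $u$ replaced by an appropriate combination, or better: note that $Q_v(u,v) = -(C^\flat_v(\IIs_v(v,v),u))^{\top_{g_v}}$ by part $(ii)$ of Lemma \ref{CartanQ}, so $g_v(Q_v(u,v),\cdot)$ on $T_{\pi(v)}S$ equals $-C_v(\IIs_v(v,v),u,\cdot)$. Then by part $(ii)$ of Lemma \ref{CartanQCor}, $g_v(Q_v(u,v),u) $-type terms relate back to $C_v(u,u,\IIs_v(v,v))$, which is \eqref{tedious01eq2}. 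I would feed the formula from \eqref{tedious01eq2} into the expression for $C_v(u,Q_v(u,v),\IIs_v(v,v))$ — treating $Q_v(u,v)$ as the third slot and decomposing it into radial plus $T\Sigma$ part as before — and the $h(\cdot,v)$ of its $T\Sigma$-part will produce, via part $(ii)$ of Lemma \ref{CartanQCor} and \eqref{usefuleq}, a factor proportional to $h(\IIs_v(v,v),v)$ again, giving the square. The arithmetic bookkeeping of the constants ($\tfrac12$, $\phi$, $F$ powers) is the only real labor.

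The main obstacle I anticipate is tracking the radial/tangential decomposition carefully enough to be sure the ``radial'' pieces genuinely drop out: one must verify that every time a vector like $\IIs_v(v,v)$ or $Q_v(u,v)$ appears inside $C_v$, replacing it by its $T_\Sigma$-projection is legitimate (immediate from $C_v(v,\cdot,\cdot)=0$ and trilinearity), and that the surviving $h$-inner products are correctly re-expressed — in particular that $h(n_\Sigma, v)$ collapses to $h(\IIs_v(v,v),v)$ rather than carrying an extra $\phi$-dependent correction. A clean way to sidestep this is to note $h(\cdot,v)$ kills the $T_{v/F(v)}\Sigma$ directions only up to the $W$-shift, so I would instead pair with $v/F(v)-W$ where orthogonality is exact, and convert at the very end. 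Everything else is a direct substitution of Proposition \ref{CartangSigma} and parts $(i)$–$(ii)$ of Lemma \ref{gVProptoSigmaN}.
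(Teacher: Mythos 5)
Your overall strategy (reduce everything to Proposition \ref{CartangSigma} and Lemmas \ref{gVProptoSigmaN}, \ref{CartanQCor}) is the same as the paper's, but there are two concrete problems. First, the ``subtlety'' you build your argument around is not there: both $\IIs_v(v,v)$ and $Q_v(u,v)$ \emph{are} tangent to $T_{v/F(v)}\Sigma$. Indeed $\IIs_v(v,v)$ is $g_v$-orthogonal to all of $T_{\pi(v)}S\ni v$, and $g_v(Q_v(u,v),v)=0$ by part $(i)$ of Lemma \ref{CartanQCor}; by part $(i)$ of Lemma \ref{gVProptoSigmaN}, $T_{v/F(v)}\Sigma$ is exactly the $g_v$-orthogonal complement of $v$, so both vectors lie in it and Proposition \ref{CartangSigma} applies directly, with no radial/tangential decomposition needed. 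Your worry about whether $h(n_\Sigma,v)$ ``collapses'' to $h(\IIs_v(v,v),v)$ is moot because $n_\Sigma=\IIs_v(v,v)$.

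Second, and more seriously, the actual mechanism that kills the cross terms is never identified. With $z=\IIs_v(v,v)$ the correct expansion of Proposition \ref{CartangSigma} is
\begin{equation*}
C_v(u,u,z)=-\tfrac{1}{2\phi(v)^2F(v)^2}\left(h(u,u)h(z,v)+2\,h(u,z)h(u,v)\right),
\end{equation*}
(you placed the factor $2$ on the wrong term), and the second summand vanishes not because $h(u,v)=0$ --- you correctly note it need not be --- but because $h(u,\IIs_v(v,v))=0$: one has $g_v(u,\IIs_v(v,v))=0$ since $\IIs_v(v,v)$ is $g_v$-normal to $S$, and both vectors lie in $T_{v/F(v)}\Sigma$ where $g_v=\tfrac{1}{\phi(v)}h$ by part $(ii)$ of Lemma \ref{gVProptoSigmaN}. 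The analogous facts $h(Q_v(u,v),\IIs_v(v,v))=0$ and $h(u,\IIs_v(v,v))=0$ are what reduce $C_v(u,Q_v(u,v),\IIs_v(v,v))$ to the single term $-\tfrac{1}{2\phi(v)^2F(v)^2}h(u,Q_v(u,v))h(\IIs_v(v,v),v)$, after which part $(ii)$ of Lemma \ref{CartanQCor} and \eqref{tedious01eq2} give the square, as you anticipate. As written, your ``symmetry-plus-orthogonality bookkeeping'' is an unproved assertion, and combined with the misplaced factor $2$ it would produce the wrong constant in \eqref{tedious01eq2}; supplying the orthogonality relations above closes the gap and simultaneously removes the need for your decomposition.
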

\begin{proof}
Observe that by definition of $\IIs_v$, $g_v(v,\IIs_v(v,v))=g_v(u,\IIs_v(v,v))=0$. Then by part $(ii)$ of Lemma \ref{gVProptoSigmaN}, $h(u,\IIs_v(v,v))=0$ and as a consequence, from Proposition \ref{CartangSigma},
\[C_v(u,u,\IIs_v(v,v))
=  \,\,
-
\frac{1}{2F(v)^2\phi(v)^2}h(u,u) h( \IIs_v(v,v),v).\]
Using again  part $(ii)$ of Lemma \ref{gVProptoSigmaN} to obtain that $h(u,u)=\phi(v) g_v(u,u)$, we conclude \eqref{tedious01eq2}.

Noting that $ g_v(Q_v(u,v),v) = 0 $ by part $(i)$ of Lemma \ref{CartanQCor}, we can apply again Proposition  \ref{CartangSigma}. Moreover, $ g_v(Q_v(u,v),\IIs_v(v,v)) =0$ and  by part $(ii)$ of Lemma \ref{gVProptoSigmaN}, $h(Q_v(u,v),\IIs_v(v,v)) =0$. As we have seen above that $h(u,\IIs_v(v,v))=0$, from Proposition \ref{CartangSigma},
\begin{equation}\label{pass1}
C_v(u,Q_v(u,v),\IIs_v(v,v))= -\frac{1}{2F(v)^2\phi(v)^2} h(u,Q_v(u,v)) h(\IIs_v(v,v),v).
\end{equation}
Finally, applying part $(ii)$ of Lemma \ref{gVProptoSigmaN} and part $(ii)$ of Lemma \ref{CartanQCor}, 
\[h(u,Q_v(u,v))=\phi(v) g_v(u,Q_v(u,v))=-\phi(v)C_v(u,u,\IIs_v(v,v)).\]
Taking into account \eqref{tedious01eq2} and then substituting in \eqref{pass1}, we obtain \eqref{tedious01eq1}.
\end{proof}
\subsection{Flag curvature}
We are ready to compute the flag curvature of a submanifold using \eqref{flagcurvature}. We will need the derivative of the connection $\IIs'$. % with respect to the induced connection on $S$ by $\widetilde \nabla$ obtained using $h$. 
%Given $v\in TS$, let $ ^{\top_v} $ and $ ^{\bot_v} $ denote the $h$-projections onto the parts tangent and $h$-orthogonal to $ T_{\pi(v)}S$. When these $h$-projections are applied to a vector field on $S$ we will omit $v$.
\begin{defi} \label{euclideanIIs}
 Let $(\V,F)$ be a Randers-Minkowski space with Zermelo data $(h,W)$ and $S$, a submanifold of $\V$. Denote by $ \IIs' $ the \emph{second fundamental form} with respect to $h$ of $S $, and by $\bar\nabla$, the  induced connection on $S$ (computed using $h$). 
Given $X,Y,Z\in\mathfrak{X}(S)$, let us define $\bar\nabla \IIs' $ as follows
\begin{equation} \label{c4}
(\bar\nabla_X\IIs')(Y,Z) = (\widetilde\nabla_X(\IIs'(Y,Z)))^\bot-\IIs'(\bar\nabla_XY,Z)-\IIs'(Y,\bar\nabla_XZ).
\end{equation}
%where $ ^\bot $ denotes the part orthogonal to $ TS $ with respect to $h$.
\end{defi}
\begin{lemma} \label{tedious02}
Let $(\V,F)$ be a Randers-Minkowski space  with Zermelo data $ (h,W)$ and  $S$, a submanifold of $\V$. For $ v \in \V\setminus\{0\} $ and $ u \in T_{\pi(v)} S $ such that $g_v(v,u)=0$,
\begin{align} \nonumber
g_v((\nabla_v Q)_v(u,u),v) = \,\, & \nonumber
-\frac{g_v(u,u)}{2F(v)\phi(v)^2}\left| \tfrac{v}{F(v)}-\WSigP\right|_h^2 \left[ \vphantom{\frac{v}{F(v)}}h( (\bar\nabla_v \IIs')(v,v),W )\right.\\&
\hspace{-0.5cm}-
h( \IIs'(v,v),\IIs'(v,W^{\top_{\pi(v)}}))	-\frac{4}{\phi(v)} h( \IIs'(v,v),W) \h( \IIs'(v,\WSigP),W)\nonumber \\ 
&\quad \quad\left.+ \tfrac{1}{\phi(v)^2F(v)}h( \IIs'(v,v),W)^2 (4\phi(v)-2\left| \tfrac{v}{F(v)}-\WSigP\right|_h^2)\right].
%h( 2\tfrac{v}{F(v)}-4 W+2\WSigP, \tfrac{v}{F(v)})\right] 
\label{nablaQ}
\end{align}
\end{lemma}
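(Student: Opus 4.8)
The strategy is to reduce the left-hand side $g_v((\nabla_vQ)_v(u,u),v)$ to a computation purely in terms of the Riemannian data $(h,\IIs',\bar\nabla\IIs')$ by combining three ingredients: (a) the explicit description of $Q_v$ in Lemma~\ref{CartanQ}, which expresses $Q_v$ through the Cartan tensor and $\IIs_v(\cdot,v)$; (b) the formula \eqref{SSFrel} relating $\IIs$ and $\IIs'$; and (c) the formula \eqref{nablaTV2} for the anisotropic tensor derivative, specialized to a $g_v$-parallel extension $V$ of $v$ (so $\nabla^vV=0$), which lets us write $(\nabla_vQ)_v(u,u)=\nabla^v_V(Q_V(U,U))-Q_v(\nabla^v_VU,U)-Q_v(U,\nabla^v_VU)$ for suitable extensions $U$ of $u$. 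Pairing against $v$ and using parts $(i)$ and $(ii)$ of Lemma~\ref{CartanQCor}, which give $g_v(Q_v(u,v),v)=0$ and $g_v(Q_v(u,w),v)=C_v(u,w,\IIs_v(v,v))$, should collapse most terms; in particular $g_v((\nabla_vQ)_v(u,u),v)$ should be expressible via a $V$-derivative of $C_V(U,U,\IIs_V(V,V))$ minus Cartan-tensor correction terms coming from $\partial^\nu C$ and from $\nabla^v_VU$.

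First I would choose the extensions carefully: take $V$ to be the $h$-parallel (equivalently, by the remark that the Chern connection of $(\V,F)$ is the Levi-Civita connection of $h$, the $g$-parallel) extension of $v$ along its own integral line, so that $\widetilde\nabla^V_VV=\IIs'(V,V)$ is automatic, and take $U$ to be the $\bar\nabla$-parallel transport of $u$ so that $\bar\nabla_VU=0$ and hence $\nabla^v_VU$ is controlled by $Q$ via \eqref{c2}. Then I would expand $g_v((\nabla_vQ)_v(u,u),v)$ using the observation above, substitute Lemma~\ref{CartanQ}$(ii)$ to replace $Q_v(u,v)$ by $-(C^\flat_v(\IIs_v(v,v),u))^{\top_{g_v}}$, and use Proposition~\ref{CartangSigma} together with Lemma~\ref{gVProptoSigmaN}$(ii)$ to convert every appearance of $g_v$, $C_v$ into $h$-quantities. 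The quantity $\left|\tfrac{v}{F(v)}-\WSigP\right|_h^2$ will emerge naturally because \eqref{SSFrel} produces the vector $\tfrac{v}{F(v)}-\WSigP$ whenever we project $\IIs'$ orthogonally, and its $h$-square will appear when we pair two such projections. The derivative $\tfrac{d}{dt}$ acting on the scalar coefficients $\phi$, $F$, $h(\IIs'(v,v),W)$ is where the terms $h((\bar\nabla_v\IIs')(v,v),W)$, $h(\IIs'(v,v),\IIs'(v,W^{\top}))$, and the $h(\IIs'(v,v),W)\,h(\IIs'(v,\WSigP),W)$ term are generated, using \eqref{help1}, \eqref{help2}, \eqref{help4} to differentiate $F$, $\phi$ and the orthogonal-projection correction in \eqref{SSFrel}.

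The main obstacle will be bookkeeping the differentiation of the coefficient $\tfrac{1}{\phi(v)}h(\IIs'(u,w),W)$ and of the vector $\tfrac{v}{F(v)}-\WSigP$ along $V$: the term $\WSigP$ depends on $v$ both through the Zermelo-data projection onto $T_{v/F(v)}\Sigma$ and through the $h$-orthogonal projection onto $TS$, so its derivative splits into a piece tangent to $\Sigma$ and a piece involving $\IIs'(v,\cdot)$, which is exactly what produces the asymmetric-looking $4\phi(v)-2\left|\tfrac{v}{F(v)}-\WSigP\right|_h^2$ factor and the coefficient $\tfrac{4}{\phi(v)}$ in front of the mixed term. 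I expect to need to repeatedly invoke \eqref{usefuleq} to trade $h(v,\cdot)$ against $g_v(W,\cdot)$ on $T_{v/F(v)}\Sigma$, and to use that $h(u,\IIs_v(v,v))=0$ (shown in the proof of Lemma~\ref{tedious01}) to kill cross terms. Once all $g_v$'s and $C_v$'s are rewritten via $h$, the identity \eqref{nablaQ} should follow by collecting terms; the only genuine subtlety is keeping track of which projections ($\top_p$, $\top_{g_v}$, onto $T_{v/F(v)}\Sigma$) each factor carries, since Lemma~\ref{gVProptoSigmaN} relates them but they are not identical.
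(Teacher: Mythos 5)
Your overall route coincides with the paper's at the skeleton level: pair $(\nabla_vQ)_v(u,u)$ with $v$, use Lemma \ref{CartanQCor}$(ii)$ to turn $g_V(Q_V(U,U),V)$ into $C_V(U,U,\IIs_V(V,V))$, convert this into an $h$-expression via Proposition \ref{CartangSigma} and Lemma \ref{gVProptoSigmaN}, and then differentiate scalar quantities along $V$ using Lemma \ref{IIvII} and \eqref{usefuleq}. The genuine gap is in your choice of extensions, which is internally inconsistent, and each of the two options you mix invalidates a step you rely on. In your first paragraph you require $\nabla^vV=0$ for the Chern connection of $(S,L|_S)$ (as in \eqref{nablaXV=0}); only under this condition does \eqref{nablaTV2} give $(\nabla_vQ)_v(u,u)=\nabla^v_V(Q_V(U,U))-2Q_v(\nabla^v_VU,u)$ with no $\partial^\nu Q$ term, and only then does \eqref{gco} let you replace $g_v(\nabla^v_v(Q_V(U,U)),v)$ by $v(g_V(Q_V(U,U),V))$ without Cartan corrections. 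But in your second paragraph you instead take $V$ with $\bar\nabla_VV=0$ and $U$ with $\bar\nabla_VU=0$ so that $\widetilde\nabla_VV=\IIs'(V,V)$. These two prescriptions are incompatible: under $\nabla^vV=0$ one has $\widetilde\nabla_vV=\IIs_v(v,v)$, not $\IIs'(v,v)$, and $\bar\nabla_vV=(\IIs_v(v,v))^{\top_{\pi(v)}}=\tfrac{1}{\phi(v)}h(\IIs'(v,v),W)\bigl(\tfrac{v}{F(v)}-\WSigP\bigr)\neq 0$ in general. This nonvanishing $\bar\nabla_vV$ is precisely what generates, when one differentiates $h(\IIs'(V,V),W)$, the contribution $\tfrac{2}{\phi}h(\IIs'(v,v),W)\,h\bigl(\IIs'(\tfrac{v}{F(v)}-\WSigP,v),W\bigr)$, which is the source of half of the $-\tfrac{4}{\phi}h(\IIs'(v,v),W)h(\IIs'(v,\WSigP),W)$ term and part of the $4\phi$ term in \eqref{nablaQ}; with your $\bar\nabla$-parallel choices these contributions would simply be lost.

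If, conversely, you keep $\bar\nabla_VV=0$, then $\nabla^v_vV\neq 0$, so your displayed formula for $(\nabla_vQ)_v(u,u)$ is missing the term $-(\partial^\nu Q)_v(u,u,\nabla^v_vV)$, and passing to the scalar derivative via \eqref{gco} adds $g_v(Q_v(u,u),\nabla^v_vV)+2C_v(\nabla^v_vV,Q_v(u,u),v)$. Evaluating $g_v((\partial^\nu Q)_v(u,u,z),v)$ for a general $z$ is not supplied by Lemma \ref{CartanQCor}$(iii)$, which only treats the case where $v$ occupies one of the first two slots, so this would be a substantive extra computation that your plan does not address (knowing only the contractions of $Q$ in Lemma \ref{CartanQ}$(iii)$, its vertical derivative involves $\partial^\nu C$ and $\partial^\nu\IIs$). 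The repair is to fix the choice as in the paper: $\nabla^vV=0$ and $[U,V]=0$ (hence $\nabla^v_vU=0$), accept that $\widetilde\nabla_vV=\IIs_v(v,v)$ and $\bar\nabla_vV\neq 0$, and handle the derivative of $h\bigl(\tfrac{V}{F\circ V}-\WSigPV,V\bigr)$ with a $\nabla^v$-parallel $g_V$-orthonormal frame expressing $\WSigPV$; with that correction the remaining steps of your outline do follow the paper's computation.
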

\begin{proof}
Recall that it is possible to choose a local extension $V$ of $v$ 
 such that $ \nabla^v V = 0$ (see \eqref{nablaXV=0}). Then we can also choose a local extension $U$ of $u$  such that $ [U,V] = 0$. In such a case, we also have that $\nabla^v_VU=\nabla^v_UV+[U,V]|_{\pi(v)}=0$. 
Using that $\nabla^v_vV=\nabla^v_vU=0$, Lemma \ref{CartanQCor} and \eqref{tedious01eq2}, it follows that
%\begin{align*}
%v (\overbrace{g_V(Q_V(U,U),V)}^{C_V(U,U,\IIs_V(V,V))})
%=
%g_v((\nabla_v Q)_v(u,u),v)
%+
%\overbrace{
%2 g_v(Q_v(\nabla^v_v U,u),v)
%+
%g_v((\partial^\nu Q)_v(u,u,\nabla^v_v V),v)}^{=0}
%\\
%+
%\underbrace{
%g_v(Q_v(u,u),\nabla^v_v V)
%+
%2C_v(Q_v(u,u),v,\nabla^v_v V)
%}_{=0}
%\qquad
%\end{align*}
\begin{align*} \nonumber
g_v((\nabla_v Q)_v(u,u),v) &
= v(g_V(Q_V(U,U),V))=\,\, 
v (C_V(U,U,\IIs_V(V,V)) )\\
&=- v \left(\frac{1}{2(F\circ V)^2\phi\circ V}g_V(U,U) h( \IIs_V(V,V),V )\right).
\end{align*}
 As $ v((F\circ V)^2) = 2g_v(\nabla^v_v V,v) = 0 $ and $ v(g_V(U,U)) = 2g_v(\nabla^v_v U,u) = 0 $, and using Lemma  \ref{IIvII}, we have
\begin{align}\nonumber
g_v((\nabla_v Q)_v(u,u),v)=  & - v \left(\frac{g_V(U,U)}{2(F\circ V)^2(\phi\circ V)^2}h( \IIs'(V,V),W)h( V/(F\circ V)-\WSigPV,V)\right) \\
= & -\frac{g_v(u,u)}{2F(v)^2\phi(v)^2} \left(-2\frac{v(\phi\circ V)}{\phi(v)} h(\IIs'(v,v),W )h( \tfrac{v}{F(v)}-\WSigP,v)\right.\nonumber \\
&+v(h(\IIs'(V,V),W))h(\tfrac{v}{F(v)}-\WSigP,v)\nonumber \\
&\left.+h( \IIs'(v,v),W ) v(h(V/(F\circ V)-\WSigPV,V))
 \right).
\label{tedious02eq0}
\end{align}
%where we recall that $ \lambda(V) = \langle \frac{V}{F(V)}-W,\frac{V}{F(V)} \rangle $. 
Using that $ \nabla^v V = 0 $ and part $(i)$ of Lemma \ref{CartanQ},
\begin{equation}\label{nablatildeV}
 \widetilde\nabla_v V = \widehat\nabla^v_v V + \IIs_v(v,v) = Q_v(v,v)+\nabla^v_v V + \IIs_v(v,v) = \IIs_v(v,v) 
 \end{equation}
  is $ g_v $-orthogonal to $ T_{\pi(v)} S \ni v $, therefore $h( \frac{v}{F(v)}-W,\widetilde\nabla_v V ) = 0 $. Using also Lemma \ref{IIvII},  and recalling \eqref{mu} and 
  that $\widetilde\nabla$ is also the Levi-Civita connection of $h$, 
  we have
\begin{align}\nonumber 
v (\phi\circ V)
&=
v(h( \tfrac{V}{F\circ V}-W,\tfrac{V}{F\circ V}))=
\tfrac{1}{F(v)}
h( \widetilde\nabla_v V,\tfrac{v}{F(v)})
=
\tfrac{1}{F(v)} h(\IIs_v(v,v),\tfrac{v}{F(v)})\\ \label{vlambda}
&= \tfrac{1}{F(v)\phi(v)}h( \IIs'(v,v),W) h(\tfrac{v}{F(v)} -\WSigP,\tfrac{v}{F(v)}).
\end{align}
%that is, by Proposition \ref{tedious01},
%\begin{align} \nonumber
%g_v((\nabla_v Q)_v(u,u),v)
%= \,\, &
%\frac{g_v(u,u)}{F^2(v) \lambda(v)} \left( - v (\langle \IIs_V(V,V),V \rangle) + \tfrac{1}{\lambda(v)} \langle \IIs_v(v,v),v \rangle v (\lambda(V)) \right)
%\\ = \,\, & \nonumber
%-
%\frac{g_v(u,u)}{F^2(v) \lambda(v)}
%v (\langle \IIs_V(V,V),V \rangle)
%+
%\frac{g_v(u,u)}{F^4(v) \lambda^2(v)} \langle \IIs_v(v,v),v \rangle^2
%\\ = \,\, &
%-
%\frac{g_v(u,u)}{F^2(v) \lambda(v)}
%v (\langle \IIs_V(V,V),V \rangle)
%-
%C_v(u,Q_v(u,v),\IIs_v(v,v))
%\label{tedious02eq1}
%\end{align}
%Let $ ^\top $ and $ ^\bot $ denote projections onto the parts tangent and $ \langle \cdot,\cdot \rangle $-orthogonal to $ T_{\pi(v)} P $. By Lemma \ref{IIvII}
%\begin{align} \nonumber
%v( \langle \IIs_V(V,V),V \rangle)
%= \,\, &
%v ( \overbrace{\langle \IIs'(V,V),V \rangle}^{=0} + \frac{1}{\lambda(V)}\langle \IIs'(V,V),W \rangle \langle \tfrac{V}{F(V)}-\WSigPV,V \rangle )
%\\ \nonumber= \,\, &-  \frac{v(\lambda(V))}{\lambda(v)^2} \langle \IIs'(v,v),W \rangle \langle \tfrac{v}{F(v)}-\WSigP,v \rangle\\ \nonumber&+
%\frac{1}{\lambda(v)}\langle \tfrac{v}{F(v)}-\WSigP,v \rangle
%\underbrace{
%v (\langle \IIs'(V,V),W \rangle)
%}_{\eqref{tedious02eq8}}
%\\&+
%\frac{1}{\lambda(v)}\langle \IIs'(v,v),W \rangle
%\underbrace{
%v (\langle \tfrac{V}{F(V)}-\WSigPV,V \rangle)
%}_{\eqref{tedious02eq7}},
%\label{tedious02eq2}
%\end{align}
By Definition \ref{euclideanIIs}, 
\begin{align*}
v (h( \IIs'(V,V),W))
= \,\, &
v (h(\IIs'(V,V),W^{\bot}))
\\= \,\, &
h( \widetilde\nabla_v( \IIs'(V,V)),W^{\bot_{\pi(v)}})
+
h( \IIs'(v,v),\widetilde\nabla_v (W^{\bot}) )
\\ = \,\, &
h( (\bar\nabla_v \IIs')(v,v),W )
+
2 h( \IIs'(\bar\nabla_v V,v),W )
+
h( \IIs'(v,v),\widetilde\nabla_v (W^{\bot})).
\end{align*}
As $ W $ is constant, $ \widetilde\nabla_v (W^{\bot}) =  -\widetilde\nabla_v (W^{\top}) $ and 
\[h( \IIs'(v,v),\widetilde\nabla_v (W^{\bot})) = -h(\IIs'(v,v),\IIs'(v,W^{\top_{\pi(v)}})).\]
Furthermore,  $ \bar\nabla_v V = (\widetilde\nabla_v V)^{\top_{\pi(v)}} = (\IIs_v(v,v))^{\top_{\pi(v)}} = \frac{1}{\phi(v)} h(\IIs'(v,v),W) (\frac{v}{F(v)}-\WSigP) $ by \eqref{nablatildeV} and Lemma \ref{IIvII}, therefore
\[
h( \IIs'(\bar\nabla_vV,v),W)
=
\frac{1}{\phi(v)}h( \IIs'(v,v),W) h(\IIs'(\tfrac{v}{F(v)}-\WSigP,v),W),
\]
which leads to
\begin{align} \label{tedious02eq8}
 v( h( \IIs'(V,V),W)) =  & 
h( (\bar\nabla_v \IIs')(v,v),W)
-
h( \IIs'(v,v),\IIs'(v,W^{\top_{\pi(v)}}) )\\  \nonumber &
+
\frac{2}{\phi(v)}h( \IIs'(v,v),W) h( \IIs'(\tfrac{v}{F(v)}-\WSigP,v),W).
\end{align}
Let $ \tfrac{V}{F\circ V},E_2,\cdots,E_r $ be a $ g_V $-orthonormal local frame of $S$ in a neighborhood of $ \pi(v) $ such that $ \nabla^v_v E_i = 0 $. This can be obtained, for example, by making the $\nabla^V$-parallel translation of a $g_v$-orthonormal basis of $T_{\pi(v)}S$ along the integral curves of $V$, since we can assume that the integral curve of $V$ passing through $\pi(v)$ is a geodesic of $(S,L|_S)$. Therefore $ \widetilde\nabla_v E_i = \IIs_v(v,E_i)+Q_v(v,E_i) $ for $i=2,\ldots,r$.  Then  $E_2,\ldots, E_r$ at every $p\in S$ where it is defined is an $h$-orthogonal frame of $T_{\pi(V_p)} S\cap T_{V_p/F(V_p)}\Sigma$ and $h(E_i,E_i) =\phi\circ V$ (recall part $(ii)$ of Lemma \ref{gVProptoSigmaN}). This implies that 
\begin{equation}\label{wsigp}
\WSigPV=\frac{1}{\phi\circ V}\sum_{i=2}^r h(W,E_i) E_i.
\end{equation}
 As a consequence,
\begin{align}
\label{tedious02eq7}
v (h(\tfrac{V}{F\circ V}-\WSigPV,V))=v\left(\frac{1}{F\circ V}h( V,V) -\frac{1}{\phi\circ V}\sum_{i=2}^r h( W,E_i) h( E_i,V)\right).
\end{align}
In the following, we will use that $T_{V_p/F(V_p)}\Sigma$ coincides with $\{V_p/F(V_p)-W\}^{\bot_{V_p}}$ at every $p\in S$ where $V$ is defined, and then, in particular, $V/(F\circ V)-W$ is $h$-orthogonal to $E_i$, which implies that $\h(W,E_i)=\frac{1}{F\circ V}h( V,E_i)$  and then 
\[h( W,E_i) h( E_i,V)=F\circ V h(W,E_i)^2.\]  Taking this into account and  \eqref{nablatildeV}, we get
\begin{align}\nonumber
v (h( \tfrac{V}{F\circ V}-\WSigPV,V))=&\frac{2}{F(v)}h( \IIs_v(v,v),v)+\frac{v(\phi\circ V)}{\phi(v)^2}\sum_{i=2}^r h(W,E_i) h(E_i,v)\\
&  -\frac{2F(v)}{\phi(v)}\sum_{i=2}^r h(W,\IIs_v(v,E_i)+Q_v(v,E_i)) h(E_i,W). \label{tedious02eq71}
\end{align}
Moreover, using  \eqref{vlambda}  and \eqref{wsigp}, %	and $\langle V,E_i\rangle=F(V)\langle W,E_i\rangle$,
\begin{align}
\frac{v(\phi\circ V)}{\phi(v)^2}\sum_{i=2}^r h(W,E_i) h(E_i,v)
 &= \frac{1}{\phi(v)^2F(v)^2}h(\IIs_v(v,v),v) h( \sum_{i=2}^r h( W,E_i) E_i,v)\nonumber \\
&=  \frac{1}{\phi(v)F(v)^2}h( \IIs_v(v,v),v) h( \WSigP,v). \label{tedpart2}
\end{align}
%where in the last two identities we have used \eqref{wsigp} and that $\WSigP$ is orthogonal to $v/F(v)-W$. 
On the other hand, using again \eqref{wsigp} and that $v/F(v)-W$ is $h$-orthogonal to $T_{v/F(v)}\Sigma$, and observing that $g_v(\IIs_v(v,\WSigP)+Q_v(v,\WSigP),v)=0$ (because of the definition of $\IIs_v$ and part $(i)$ of Lemma \ref{CartanQCor}) and then $\IIs_v(v,\WSigP)+Q_v(v,\WSigP)\in T_{v/F(v)}\Sigma$,
\begin{align}
\nonumber -\frac{2F(v)}{\phi(v)}\sum_{i=2}^r h( W,\IIs_v(v,E_i)+&Q_v(v,E_i)) h( E_i,W)\\
=& -2F(v) h( W,\IIs_v(v,\WSigP)+Q_v(v,\WSigP) ) \nonumber \\
=&  -2 h( v,\IIs_v(v,\WSigP)+Q_v(v,\WSigP) ). \label{tedpart3}
\end{align}
%In the last identity, we have used that $\tfrac{v}{F(v)}-W$ is orthogonal with the Euclidean metric to $T_{v/F(v)}\Sigma$ (recall Lemma \ref{gVProptoSigmaN}).
Observe that using \eqref{usefuleq}, that $E_i$ is $h$-orthogonal to $V/(F\circ V)-W$, and \eqref{wsigp},
\begin{align}
\nonumber W^{\top_{g_v}}=&g_v(\frac{v}{F(v)},W) \frac{v}{F(v)}+\sum_{i=2}^r g_v(W,E_i)E_i\\
\nonumber &=g_v(\frac{v}{F(v)},W) \frac{v}{F(v)}+\sum_{i=2}^r \frac{1}{F(v)\phi(v)^2}h(v,E_i) E_i\\
\nonumber &=g_v(\frac{v}{F(v)},W) \frac{v}{F(v)}+\sum_{i=2}^r \frac{1}{\phi(v)^2}h(W,E_i) E_i\\
&=g_v(\frac{v}{F(v)},W) \frac{v}{F(v)}+\frac{1}{\phi(v)}\WSigP. \label{WagWsig}
\end{align}
Now applying  \eqref{usefuleq}, \eqref{WagWsig}, parts $(i)$ and $(ii)$ of Lemma \ref{CartanQCor}, and Proposition \ref{CartangSigma},
\begin{align*}%\label{paso1}
h( Q_v(v,&\WSigP),v) = \phi(v)^2F(v) g_v(Q_v(v,\WSigP),W)\nonumber \\
&=  \phi(v)^2F(v) g_v(Q_v(v,\WSigP),W^{\top_{g_v}})=F(v)\phi(v) g_v(Q_v(v,\WSigP),\WSigP)\nonumber\\\nonumber
&=- F(v) \phi(v) C_v(\WSigP,\WSigP,\IIs_v(v,v))\\&=\frac{1}{2F(v)\phi(v)}h(\WSigP,\WSigP) h(\IIs_v(v,v),v).
\end{align*}
Using the above identity in  \eqref{tedpart3} and then the resulting identity and \eqref{tedpart2} in \eqref{tedious02eq71}, it follows that
\begin{align*}\nonumber
%\label{tedious02eq71b}
v (h(\tfrac{V}{F\circ V}-\WSigPV,V))=&\frac{2}{F(v)}h( \IIs_v(v,v),v)+ \frac{1}{\phi(v)F(v)^2}h( \IIs_v(v,v),v) h( \WSigP,v)\\
&  -2 h( v,\IIs_v(v,\WSigP) )-\frac{1}{F(v)\phi(v)}h(\WSigP,\WSigP) h(\IIs_v(v,v),v).
\end{align*}
Observing that
$h(\tfrac{v}{F(v)},\WSigP)=h( W,\WSigP)=h( \WSigP,\WSigP)$ and applying Lemma \ref{IIvII}, we have
\begin{align}\nonumber
v (h(\tfrac{V}{F\circ V}-\WSigPV,V))&=\frac{2}{F(v)}h( \IIs_v(v,v),v) -2 h( v,\IIs_v(v,\WSigP) )\\
&\hspace{-1.2cm}=\frac{2}{\phi(v)}\left(\frac{1}{F(v)}h( \IIs'(v,v),W) - h( W,\IIs'(v,\WSigP) )\right)h( \frac{v}{F(v)}-\WSigP,v).
\label{tedious02eq71b}
\end{align}
Using \eqref{vlambda}, \eqref{tedious02eq8} and \eqref{tedious02eq71b} in \eqref{tedious02eq0}, we obtain 
\begin{align*} \nonumber
g_v((\nabla_v Q)_v(u,u),v) 
=& -\frac{g_v(u,u)}{2F(v)^2\phi(v)^2}( -\frac{2}{\phi(v)^2} h(\IIs'(v,v),W )^2h( \tfrac{v}{F(v)}-\WSigP,\tfrac{v}{F(v)})^2\\
&+h( \frac{v}{F(v)}-\WSigP,v)\left[\vphantom{\frac{a}{F(a)}}h( (\bar\nabla_v \IIs')(v,v),W)\right.\\  \nonumber &
-
h( \IIs'(v,v),\IIs'(v,W^{\top_{\pi(v)}}) )\\ \nonumber
&+
\frac{2}{\phi(v)}h( \IIs'(v,v),W) h( \IIs'(\tfrac{v}{F(v)}-\WSigP,v),W)\\
&\hspace{-0.7cm}\left.+\frac{2}{\phi(v)}h( \IIs'(v,v),W)\left(\frac{1}{F(v)}h( \IIs'(v,v),W) - h( W,\IIs'(v,\WSigP) )\right)\right]).
\end{align*}
Finally, a straightforward simplification of the above identity leads to \eqref{nablaQ} taking into account that $h( \tfrac{v}{F(v)}-\WSigP,\WSigP)=0$, as commented above, and then 
\begin{equation}\label{hvW}
h( \tfrac{v}{F(v)}-\WSigP,\tfrac{v}{F(v)})
=h( \tfrac{v}{F(v)}-\WSigP,\tfrac{v}{F(v)}-\WSigP).
\end{equation}
\end{proof}
 We have already all the information to express the flag curvature of a submanifold $S$ in terms of elements related to the Zermelo data $(h,W)$. More precisely, 
apart from $h$ and $W$, we will use the second fundamental form $\IIs'$ of $S$ computed with $h$, and its derivative $\bar\nabla \IIs'$, where $\bar \nabla$ is the $h$-induced connection of $S$, and the $h$-projections of
$W$, $W^\top$ and $\WSigP$ to $T_pS$ and $T_pS\cap T_{v/F(v)}\Sigma$, respectively. 
\begin{theorem}\label{maintheorem}
Let $(\V, F)$ be a Randers-Minkowski space with Zermelo data $ (h,W) $ and $ S$, a  submanifold of $\V$. For $ v \in TS\setminus\bf 0 $ and $ u \in T_{\pi(v)} S $,
\begin{align}
\nonumber K_v(u)=&(h(\tfrac{v}{F(v)},\tfrac{v}{F(v)})-h(\tfrac{\tilde u}{|\tilde u|_h},\tfrac{v}{F(v)})^2)K^h(v,u)\\
&+\frac{\left| \tfrac{v}{F(v)}-\WSigP\right|_h^2}{\phi(v)^2F(v)^2h(\tilde u,\tilde u)} (h( \IIs'(u,u),W)h(\IIs'(v,v),W )- h(\IIs'(u,v),W )^2)\nonumber\\
\nonumber&\hspace{-0.4cm}+\frac{1}{2F(v)^3\phi(v)^2}\left| \tfrac{v}{F(v)}-\WSigP\right|_h^2 \left[h( (\bar\nabla_v  \IIs')(v,v),W )
-
h( \IIs'(v,v),\IIs'(v,W^{\top_{\pi(v)}}) )\right.\\ \nonumber
&\quad \quad+ \tfrac{1}{\phi(v)^2F(v)}h( \IIs'(v,v),W)^2(4\phi(v)-\frac{5}{2}\left| \tfrac{v}{F(v)}-\WSigP\right|_h^2)\\
&\quad \quad\hspace{3cm}\left.-\frac{4}{\phi(v)} h( \IIs'(v,v),W) h( \IIs'(v,\WSigP),W))\right],\label{Kflagformula}
\end{align}
where $K^h(v,u)$ is the Riemannian sectional curvature in the plane ${\rm span}\{v,u\}$ computed with the metric induced by $h$ on $S$,  $\phi(v)=h(v/F(v)-W,v/F(v))$  and $\tilde u=u-\frac{1}{\phi(v)}h(v/F(v)-W,u)v/F(v)$.
\end{theorem}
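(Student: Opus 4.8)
The plan is to specialize Corollary~\ref{Curflag} to the ambient Randers--Minkowski space $(\V,F)$ and then rewrite each of its terms in $h$-data by means of the lemmas of \S4. First I would reduce to the case $g_v(v,u)=0$. The flag curvature $K_v(u)$ depends only on the flag plane $\mathrm{span}\{v,u\}$, and each summand on the right-hand side of \eqref{Kflagformula} is likewise invariant under $u\mapsto u+cv$ and under rescaling of $u$: for the sectional-curvature summand this follows from the skew-symmetry of the curvature tensor in its first two slots and from the fact that $w\mapsto\tilde w$ is the $g_v$-orthogonal projection onto the $g_v$-complement of $v$ (so $\tilde v=0$ and $\widetilde{u+cv}=\tilde u$); for the second summand one checks directly that $h(\IIs'(u,u),W)h(\IIs'(v,v),W)-h(\IIs'(u,v),W)^2$ and $h(\tilde u,\tilde u)$ change by the same degree-two factor under $u\mapsto\lambda u$ and are unchanged under $u\mapsto u+cv$. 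Hence it suffices to prove the formula when $g_v(v,u)=0$, i.e.\ $u=\tilde u\in T_{v/F(v)}\Sigma$ (Lemma~\ref{gVProptoSigmaN}(i)); then the denominator of \eqref{flagcurvature} equals $F(v)^2g_v(u,u)=\tfrac{F(v)^2}{\phi(v)}h(u,u)$ by Lemma~\ref{gVProptoSigmaN}(ii). Moreover, since the Levi-Civita--Chern connection $\widetilde\nabla$ of $(\V,F)$ is the flat Levi-Civita connection of the Euclidean space $(\V,h)$, its curvature $\widetilde R$ vanishes (so $\widetilde K_v(u)=0$) and, being direction-independent, its vertical derivative $\widetilde P$ vanishes too; thus \eqref{flagcurvature} collapses to
\begin{align*}
K_v(u)=\frac{\phi(v)}{F(v)^2h(u,u)}\Big(\,&g_v(\IIs_v(u,u),\IIs_v(v,v))-g_v(\IIs_v(v,u),\IIs_v(v,u))\\
&-g_v((\nabla_vQ)_v(u,u),v)+C_v(u,Q_v(v,u),\IIs_v(v,v))\,\Big),
\end{align*}
and it remains to evaluate the four terms in parentheses.

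For the two second-fundamental-form terms I would use that the values of $\IIs_v$ lie in $T_{v/F(v)}\Sigma$, where $g_v=\tfrac1{\phi(v)}h$ (Lemma~\ref{gVProptoSigmaN}(ii)); substituting the expression of $\IIs_v$ from Lemma~\ref{IIvII} and using that $\tfrac{v}{F(v)}-\WSigP\in T_{\pi(v)}S$ is $h$-orthogonal to every value of $\IIs'$, the cross terms vanish and
\begin{align*}
&g_v(\IIs_v(u,u),\IIs_v(v,v))-g_v(\IIs_v(v,u),\IIs_v(v,u))\\
&\quad=\tfrac1{\phi(v)}\big[h(\IIs'(u,u),\IIs'(v,v))-h(\IIs'(v,u),\IIs'(v,u))\big]\\
&\qquad+\tfrac{\left|\tfrac{v}{F(v)}-\WSigP\right|_h^2}{\phi(v)^3}\big[h(\IIs'(u,u),W)h(\IIs'(v,v),W)-h(\IIs'(v,u),W)^2\big].
\end{align*}
By the Gauss equation for the flat Riemannian ambient $(\V,h)$ --- i.e.\ \eqref{GEQ} with $\widetilde R=C=P=Q=0$ --- the first bracket equals $K^h(v,u)\big(h(v,v)h(u,u)-h(v,u)^2\big)$. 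Multiplying by $\tfrac{\phi(v)}{F(v)^2h(u,u)}$ and recalling that here $u=\tilde u$ reproduces exactly the first two lines of \eqref{Kflagformula}.

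Finally I would treat $g_v((\nabla_vQ)_v(u,u),v)$ and $C_v(u,Q_v(v,u),\IIs_v(v,v))$, which carry the bulk of the work. The first is precisely what Lemma~\ref{tedious02} computes; dividing $-g_v((\nabla_vQ)_v(u,u),v)$ by the denominator gives $\tfrac{1}{2F(v)^3\phi(v)^2}\left|\tfrac{v}{F(v)}-\WSigP\right|_h^2$ times the bracket of Lemma~\ref{tedious02}, which matches the last block of \eqref{Kflagformula} save that it carries $4\phi(v)-2\left|\tfrac{v}{F(v)}-\WSigP\right|_h^2$ where \eqref{Kflagformula} has $4\phi(v)-\tfrac52\left|\tfrac{v}{F(v)}-\WSigP\right|_h^2$. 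The missing $-\tfrac12\left|\tfrac{v}{F(v)}-\WSigP\right|_h^2$ comes from the Cartan term: by the symmetry of $Q$ (Lemma~\ref{CartanQ}(iv)) and \eqref{tedious01eq1}, $C_v(u,Q_v(v,u),\IIs_v(v,v))=-\tfrac1{4F(v)^4\phi(v)^2}g_v(u,u)\,h(\IIs_v(v,v),v)^2$, and Lemma~\ref{IIvII} together with $h(\IIs'(v,v),v)=0$ and $h(\tfrac{v}{F(v)}-\WSigP,v)=F(v)\left|\tfrac{v}{F(v)}-\WSigP\right|_h^2$ (from \eqref{hvW} and $h(\tfrac{v}{F(v)},\WSigP)=h(\WSigP,\WSigP)$) gives $h(\IIs_v(v,v),v)=\tfrac{F(v)}{\phi(v)}\left|\tfrac{v}{F(v)}-\WSigP\right|_h^2 h(\IIs'(v,v),W)$; hence, dividing by the denominator, this term contributes $-\tfrac{\left|\tfrac{v}{F(v)}-\WSigP\right|_h^4}{4F(v)^4\phi(v)^4}h(\IIs'(v,v),W)^2$. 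Adding the two contributions, the coefficients of $\tfrac{\left|\tfrac{v}{F(v)}-\WSigP\right|_h^4}{F(v)^4\phi(v)^4}h(\IIs'(v,v),W)^2$ sum to $-\tfrac54$, which is exactly the effect of turning $4\phi(v)-2\left|\tfrac{v}{F(v)}-\WSigP\right|_h^2$ into $4\phi(v)-\tfrac52\left|\tfrac{v}{F(v)}-\WSigP\right|_h^2$. Collecting the four evaluated terms yields \eqref{Kflagformula} for $g_v(v,u)=0$, and the first paragraph removes that restriction. The hard part will be precisely this reconciliation --- keeping the powers of $\phi(v)$, $F(v)$ and $\left|\tfrac{v}{F(v)}-\WSigP\right|_h^2$ straight so that the quadratic-in-$\left|\tfrac{v}{F(v)}-\WSigP\right|_h^2$ contributions of Lemmas~\ref{tedious01} and \ref{tedious02} merge with the right numerical constant --- while the remaining terms are obtained by straightforward substitution of the structural lemmas of \S3--\S4.
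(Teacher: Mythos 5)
Your proposal is correct and follows essentially the same route as the paper: reduce to $g_v(v,u)=0$, specialize Corollary \ref{Curflag} with $\widetilde K_v(u)=0$ and $\widetilde P=0$, convert the second-fundamental-form terms via Lemmas \ref{gVProptoSigmaN} and \ref{IIvII} together with the Riemannian Gauss equation for $(S,h)\subset(\V,h)$, and absorb the Cartan term of Lemma \ref{tedious01} into the $\nabla Q$ term of Lemma \ref{tedious02} to turn the coefficient $-2$ into $-\tfrac52$. The arithmetic reconciliation you carry out explicitly is exactly the step the paper leaves implicit, and your constants check out.
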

\begin{proof}
Assume that $g_v(v,u)=0$ and 
recall that, by Corollary \ref{Curflag}, taking into account that $\tilde K_v(u)=0$ and $\tilde P=0$ in a Randers-Minkowski space,
\begin{align}
\nonumber K_v(u)=&\frac{1}{\phi(v)}\frac{h( \IIs_v(u,u),\IIs_v(v,v) )- h(\IIs_v(u,v),\IIs_v(u,v) )}{g_v(u,u)F(v)^2}\\
& +\frac{1}{g_v(u,u)F(v)^2}\left(
C_v(u,Q_v(u,v),\IIs_v(v,v))
-
g_v((\nabla_v Q)_v(u,u),v))\right).\label{Kflag1}
\end{align}
Using Lemma \ref{IIvII}, it follows that 
\begin{align}
\nonumber h( \IIs_v(u,u)&,\IIs_v(v,v) )- h(\IIs_v(u,v),\IIs_v(u,v) )=h( \IIs'(u,u),\IIs'(v,v) )- h(\IIs'(u,v),\IIs'(u,v) )\\
&+\frac{1}{\phi(v)^2}\left| \tfrac{v}{F(v)}-\WSigP\right|_h^2 (h( \IIs'(u,u),W)h(\IIs'(v,v),W )- h(\IIs'(u,v),W )^2).\label{Kflag2}
\end{align}
Moreover, as we have assumed that  $g_v(v,u)=0$, we can apply part $(ii)$ of Lemma \ref{gVProptoSigmaN} to compute
\[\phi(v)(g_v(u,u)F(v)^2-g_v(u,v)^2)=F(v)^2h(u,u),\]
and then
\begin{equation}\label{Kflag3}
\frac{h(u,u)h(v,v)-h(u,v)^2}{\phi(v)(g_v(u,u)F(v)^2-g_v(u,v)^2)}=h(\tfrac{v}{F(v)},\tfrac{v}{F(v)})-h(\tfrac{u}{|u|_h},\tfrac{v}{F(v)})^2.
\end{equation}
Given an arbitrary $u\in T_{\pi(v)}S$, then $\tilde u=u-g_v(v/F(v),u)v/F(v)$ is a vector such that $\{\tilde u,v\}$ generates the same plane as $\{u,v\}$ and $g_v(\tilde u,v)=0$. Using part $(i)$ of Lemma \ref{gVProptoSigmaN}, one has
\[\tilde u=u-g_v(v/F(v),u)v/F(v)=u-\frac{1}{\phi(v)}h(v/F(v)-W,u)v/F(v).\]
Taking into account \eqref{Kflag2} and \eqref{Kflag3}, it follows that the first term in \eqref{Kflag1} becomes the two first terms to the right hand side in \eqref{Kflagformula}.
From \eqref{tedious01eq1},  \eqref{hvW} and Lemma \ref{IIvII},
\begin{align*}
C_v(u,Q_v(u,v),\IIs_v(v,v))=&-
\frac{1}{4\phi(v)^4F(v)^4}g_v(u,u) h( \IIs'(v,v),W )^2h(\tfrac{v}{F(v)}-\WSigP,v)^2\\
=&-\frac{g_v(u,u)}{4\phi(v)^4F(v)^2} h( \IIs'(v,v),W )^2\left|\tfrac{v}{F(v)}-\WSigP\right|_h^4,
\end{align*}
and then using Lemma 	 \ref{tedious02}, we conclude that the last term  in \eqref{Kflag1} coincides with the three last terms in \eqref{Kflagformula}.
\end{proof}
\begin{corollary}
Let $(\V, F)$ be a Randers-Minkowski space with Zermelo data $ (h,W) $ and $ S$, a  submanifold of $\V$. Then $S$ is of scalar flag curvature if and only if for every $v\in A$, 
\begin{multline*}
(h(\tfrac{v}{F(v)},\tfrac{v}{F(v)})-h(\tfrac{u}{|u|_h},\tfrac{v}{F(v)})^2)K^h(v,u)\\
+\frac{\left| \tfrac{v}{F(v)}-\WSigP\right|_h^2}{\phi(v)^2F(v)^2h(u,u)} (h( \IIs'(u,u),W)h(\IIs'(v,v),W )- h(\IIs'(u,v),W )^2)
\end{multline*}
has the same value for every $u\in T_{\pi(v)}S$ such that $g_v(v,u)=0$.
\end{corollary}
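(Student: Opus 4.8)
The plan is to obtain the statement as an immediate reading of the formula \eqref{Kflagformula} in Theorem \ref{maintheorem}. First I would recall that, by definition, $(S,F|_S)$ is of scalar flag curvature exactly when $K_v(u)$ depends on the flagpole $v$ alone, and that $K_v(u)$ is a function of $v$ and of the $2$-plane $\mathrm{span}\{v,u\}$ only: it is homogeneous of degree zero in $u$ and is unchanged if $u$ is replaced by any vector of $\mathrm{span}\{v,u\}$ not proportional to $v$. Since $F|_S$ is a Finsler metric, $g_v$ is positive definite on $T_{\pi(v)}S$, so within every $2$-plane through $v$ there is a nonzero vector that is $g_v$-orthogonal to $v$ (and for such a $u$ one has $L(v)g_v(u,u)-g_v(v,u)^2=F(v)^2g_v(u,u)>0$, so the flag curvature is defined). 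Hence $S$ is of scalar flag curvature if and only if, for each $v\in TS\setminus\{0\}$, the quantity $K_v(u)$ takes the same value for all nonzero $u\in T_{\pi(v)}S$ with $g_v(v,u)=0$.

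Next I would substitute such a $u$ into \eqref{Kflagformula}. By part $(i)$ of Lemma \ref{gVProptoSigmaN} the condition $g_v(v,u)=0$ is equivalent to $h(v/F(v)-W,u)=0$, so the auxiliary vector $\tilde u=u-\tfrac{1}{\phi(v)}h(v/F(v)-W,u)\,v/F(v)$ appearing in Theorem \ref{maintheorem} reduces to $\tilde u=u$, whence $h(\tilde u,\tilde u)=h(u,u)$ and $|\tilde u|_h=|u|_h$. With this, the first two summands of \eqref{Kflagformula} become precisely the expression displayed in the corollary, while the last three summands — the bracket multiplied by $\tfrac{1}{2F(v)^3\phi(v)^2}\left|\tfrac{v}{F(v)}-\WSigP\right|_h^2$ — contain only $v$ and no occurrence of $u$. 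Consequently, for fixed $v$, the map $u\mapsto K_v(u)$ is constant on the set $\{u\in T_{\pi(v)}S:g_v(v,u)=0,\ u\neq 0\}$ if and only if the displayed expression is.

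Putting the two paragraphs together gives both implications at once: if $S$ is of scalar flag curvature then $K_v(\cdot)$ is constant and hence so is the displayed expression; conversely, constancy of the displayed expression for every $v$ forces $K_v(\cdot)$ to be constant on all flags $g_v$-orthogonal to $v$, and by the plane-invariance recalled above this propagates to all flags, i.e. $S$ is of scalar flag curvature. There is no genuine obstacle here, since the argument is a direct specialization of \eqref{Kflagformula}; the only slightly delicate point is the reduction from arbitrary flags to flags $g_v$-orthogonal to $v$, which uses the positive-definiteness of $g_v$ on $T_{\pi(v)}S$ and therefore relies on $F|_S$ being a Finsler (not merely pseudo-Finsler) metric.
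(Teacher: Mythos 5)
Your proposal is correct and takes the same route the paper intends: the corollary is stated without proof as an immediate specialization of Theorem \ref{maintheorem}, and your argument (reducing scalar flag curvature to constancy over flags $u$ with $g_v(v,u)=0$, noting that $\tilde u=u$ for such $u$ by part $(i)$ of Lemma \ref{gVProptoSigmaN}, and observing that the remaining bracketed terms of \eqref{Kflagformula} involve only $v$) is exactly the reading the authors leave implicit.
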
 
\subsection{Hypersurfaces in Randers-Minkowski spaces} We will consider now the case in that $S$ is a hypersurface of $\V$. In such a case, given $v\in TS\setminus\bf 0$, there exists a unique vector $\xi_v\in \V$ (up to a sign) such that $\xi_v$ is $g_v$-orthogonal to $T_{\pi(v)}S$ and $g_v(\xi_v,\xi_v)=1$. Moreover, there exists $\sigma_v:T_{\pi(v)}S\times T_{\pi(v)}S\rightarrow \R$ such that
\[\IIs_v(u,w)=\sigma_v(u,w)\xi_v\]
for all $u,w\in T_{\pi(v)}S$. Analogously, for every $p\in S$, there exists $N_p\in\V$ such that $N_p$ is $h$-orthogonal to $T_pS$ and $h(N_p,N_p)=1$, and $\sigma'_p:T_{p}S\times T_{p}S\rightarrow \R$ such that
\[\IIs'(u,w)=\sigma'_p(u,w)N_p\]
for all $u,w\in T_{p}S$.
\begin{lemma}\label{hyperSSF}
Let $S$ be a hypersurface of a Randers-Minkowski space $(\V,F)$. With the above notation,
\begin{equation}\label{xiv}
\xi_v=\frac{\sqrt{\phi(v)}}{\sqrt{1-h(N_{\pi(v)},W)^2}}\left(N_{\pi(v)}+h(N_{\pi(v)},W)\left(\tfrac{v}{F(v)}-W\right)\right)
\end{equation}
is the $g_v$-orthogonal vector to $T_{\pi(v)}S$ with $g_v(\xi_v,\xi_v)=1$, and
\begin{equation}\label{sigmav}
\sigma_v(u,w)=\frac{\sigma'(u,w)}{\sqrt{\phi(v) (1-h(N_{\pi(v)},W)^2)}}
\end{equation}
for all $v\in TS\setminus\bf 0$ and $u,w\in T_{\pi(v)}S$. 
\end{lemma}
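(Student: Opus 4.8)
The plan is to play off the two orthogonal splittings of $\V$ available at $\pi(v)$: the $h$-orthogonal one $\V=T_{\pi(v)}S\oplus\R N_{\pi(v)}$ and the $g_v$-orthogonal one $\V=T_{\pi(v)}S\oplus\R\xi_v$, bridging them through the third splitting $\V=\R v\oplus T_{v/F(v)}\Sigma$ from Lemma~\ref{gVProptoSigmaN}(i), on whose hyperplane $T_{v/F(v)}\Sigma$ the two scalar products agree up to the factor $\phi(v)$ by Lemma~\ref{gVProptoSigmaN}(ii). Throughout I abbreviate $N:=N_{\pi(v)}$, and I note first that $1-h(N,W)^2>0$ by Cauchy--Schwarz and $h(W,W)<1$, while $\phi(v)>0$ by Lemma~\ref{gVProptoSigmaN}(ii), so the square roots in \eqref{xiv}--\eqref{sigmav} make sense.

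For \eqref{xiv} I would introduce the unnormalized candidate $\eta:=N+h(N,W)\bigl(\tfrac{v}{F(v)}-W\bigr)$ and show $\eta\perp_{g_v}T_{\pi(v)}S$. Since $g_v(v,v)=F(v)^2\ne0$, one has $v\notin T_{v/F(v)}\Sigma=v^{\perp_{g_v}}$, hence $T_{\pi(v)}S=\R v\oplus\bigl(T_{\pi(v)}S\cap T_{v/F(v)}\Sigma\bigr)$, and it suffices to test $\eta$ against $v$ and against $T_{\pi(v)}S\cap T_{v/F(v)}\Sigma$. Against $v$: Lemma~\ref{gVProptoSigmaN}(i) together with $h(N,v)=0$ gives $g_v(N,v)=-\tfrac{F(v)}{\phi(v)}h(N,W)$, while Lemma~\ref{gVProptoSigmaN}(iii) gives $g_v\bigl(\tfrac{v}{F(v)}-W,v\bigr)=\tfrac{F(v)}{\phi(v)}$, so $g_v(\eta,v)=0$; in particular $\eta\in T_{v/F(v)}\Sigma$. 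Against $X\in T_{\pi(v)}S\cap T_{v/F(v)}\Sigma$: now both $\eta,X$ lie in $T_{v/F(v)}\Sigma$, so $g_v(\eta,X)=\tfrac{1}{\phi(v)}h(\eta,X)$ by Lemma~\ref{gVProptoSigmaN}(ii), which vanishes because $h(N,X)=0$ ($N$ is $h$-normal to $T_{\pi(v)}S$) and $h\bigl(\tfrac{v}{F(v)}-W,X\bigr)=0$ ($X\in T_{v/F(v)}\Sigma$). Thus $\xi_v$ is a scalar multiple of $\eta$, and the normalization follows from $g_v(\eta,\eta)=\tfrac{1}{\phi(v)}h(\eta,\eta)$ together with $h(\eta,\eta)=1-h(N,W)^2$, obtained by expanding with $h(N,N)=h\bigl(\tfrac{v}{F(v)}-W,\tfrac{v}{F(v)}-W\bigr)=1$ and $h\bigl(N,\tfrac{v}{F(v)}-W\bigr)=-h(N,W)$; this yields \eqref{xiv}, the sign matching the one fixed by $N$.

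For \eqref{sigmav} I would substitute $\IIs'(u,w)=\sigma'(u,w)N$ into \eqref{SSFrel} of Lemma~\ref{IIvII}, getting $\IIs_v(u,w)=\sigma'(u,w)\,\zeta$ with the fixed vector $\zeta:=N+\tfrac{h(N,W)}{\phi(v)}\bigl(\tfrac{v}{F(v)}-\WSigP\bigr)$. Since also $\IIs_v(u,w)=\sigma_v(u,w)\,\xi_v$ by definition, pairing with $\xi_v$ and using $g_v(\xi_v,\xi_v)=1$ gives $\sigma_v(u,w)=\sigma'(u,w)\,g_v(\zeta,\xi_v)$, so it only remains to evaluate $g_v(\zeta,\xi_v)$. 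As $\zeta$ is (a multiple of) the $g_v$-normal part of $\IIs_v$, it is $g_v$-orthogonal to $v$, hence $\zeta\in T_{v/F(v)}\Sigma$, and therefore $g_v(\zeta,\xi_v)=\sqrt{\tfrac{\phi(v)}{1-h(N,W)^2}}\;\tfrac{1}{\phi(v)}\,h(\zeta,\eta)$. Expanding $h(\zeta,\eta)$ with $h(N,v)=0$, $h(\WSigP,N)=0$ and the identity $h\bigl(\tfrac{v}{F(v)}-\WSigP,\tfrac{v}{F(v)}-W\bigr)=\phi(v)$ (because $h\bigl(\tfrac{v}{F(v)},\tfrac{v}{F(v)}-W\bigr)=\phi(v)$ by definition of $\phi$ and $\WSigP\perp_h\tfrac{v}{F(v)}-W$) yields $h(\zeta,\eta)=1-h(N,W)^2+h(N,W)^2=1$, whence $g_v(\zeta,\xi_v)=\tfrac{1}{\sqrt{\phi(v)(1-h(N,W)^2)}}$, which is \eqref{sigmav}; the degenerate case $\sigma'\equiv0$ at $\pi(v)$ is trivial since then $\IIs_v$ and $\sigma_v$ both vanish.

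The argument is pure linear algebra in $\V$ equipped with the two scalar products $g_v$ and $h$, so there is no essential obstacle; the only place demanding care is bookkeeping which vectors lie in $T_{v/F(v)}\Sigma$ (so that $g_v$ may be replaced by $\tfrac{1}{\phi(v)}h$) and verifying the two cancellations $h(\eta,\eta)=1-h(N,W)^2$ and $h(\zeta,\eta)=1$, for both of which the key input is $h\bigl(\tfrac{v}{F(v)}-\WSigP,\tfrac{v}{F(v)}-W\bigr)=\phi(v)$.
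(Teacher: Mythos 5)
Your proof is correct, and for \eqref{xiv} it is essentially the paper's argument: both test the candidate normal separately against $v$ and against $T_{\pi(v)}S\cap T_{v/F(v)}\Sigma$ (using that this hyperplane of $T_{\pi(v)}S$ together with $v$ spans $T_{\pi(v)}S$), and both normalize via $g_v=\tfrac{1}{\phi(v)}h$ on $T_{v/F(v)}\Sigma$ and $h(\eta,\eta)=1-h(N,W)^2$. The only cosmetic difference there is that the paper first checks $h(\xi_v,\tfrac{v}{F(v)}-W)=0$ and invokes part $(i)$ of Lemma \ref{gVProptoSigmaN} to get $g_v$-orthogonality to $v$, whereas you compute $g_v(\eta,v)$ directly from parts $(i)$ and $(iii)$; both are one line. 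For \eqref{sigmav} your route is dual to the paper's: the paper reads Lemma \ref{IIvII} as saying that $\IIs'(u,w)$ is the $h$-normal part of $\IIs_v(u,w)$, so $\sigma'=\sigma_v\,h(N,\xi_v)$ and one only needs $h(N,\xi_v)=\sqrt{\phi(v)(1-h(N,W)^2)}$, which drops out of \eqref{xiv} in one step; you instead write $\IIs_v(u,w)=\sigma'(u,w)\zeta$ and pair $g_v$-orthogonally with $\xi_v$, which costs you the extra (correct) cancellation $h(\zeta,\eta)=1$ via $h(\tfrac{v}{F(v)}-\WSigP,\tfrac{v}{F(v)}-W)=\phi(v)$. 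The paper's direction is marginally shorter; yours has the small virtue of not needing to observe separately that $\IIs'(u,w)-\IIs_v(u,w)$ is tangent to $S$, and you correctly dispose of the degenerate case $\sigma'(u,w)=0$ where your identification of $\zeta$ with a multiple of $\IIs_v(u,w)$ breaks down.
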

\begin{proof}
Let us show that $\xi_v$ is $g_v$-orthogonal to $T_{\pi(v)}S$. First observe that $h(N_{\pi(v)},v)=0$ by definition, and then $h(N_{\pi(v)},W)=-h(N_{\pi(v)},\tfrac{v}{F(v)}-W)$. This implies that $\xi_v$ is $h$-orthogonal to $\tfrac{v}{F(v)}-W$  (recall that $h(\tfrac{v}{F(v)}-W,\tfrac{v}{F(v)}-W)=1$),  and then by part $(i)$ of Lemma \ref{gVProptoSigmaN}, $g_v$-orthogonal to $v$. If $u\in T_{v/F(v)}\Sigma\cap T_{\pi(v)}S$, then by part $(ii)$ of Lemma \ref{gVProptoSigmaN},  $g_v(\xi_v,u)=\frac{1}{\phi(v)}h(\xi_v,u)=0$, because $u$ is $h$-orthogonal to $N_{\pi(v)}$ and$\tfrac{v}{F(v)}-W$, since it lies respectively in $T_{\pi(v)}S$ and $T_{v/F(v)}\Sigma$. This concludes that $\xi_v$ is $g_v$-orthogonal to $T_{\pi(v)}S$, because  $T_{v/F(v)}\Sigma\cap T_{\pi(v)}S$ has dimension $\dim S-1$ and $v\notin T_{v/F(v)}\Sigma$. Observing that $g_v(\xi_v,\xi_v)=\frac{1}{\phi(v)}h(\xi_v,\xi_v)$ by part $(ii)$ of Lemma \ref{gVProptoSigmaN}, it is straightforward to check that $g_v(\xi_v,\xi_v)=1$. Finally, as $\IIs'(u,w)-\IIs_v(u,w)$ is tangent to $T_{\pi(v)}S$ by Lemma \ref{IIvII}, it follows that $\sigma'(u,w)N_{\pi(v)}=\IIs'(u,w)=\IIs_v(u,w)^{\bot_{\pi(v)}}=\sigma_v(u,w) \xi_v^{\bot_{\pi(v)}}=\sigma_v(u,w) h(N_{\pi(v)},\xi_v)N_{\pi(v)},$ and then one has $\sigma'(u,w)=\sigma_v(u,w)h(N_{\pi(v)},\xi_v)$. As $h(N_{\pi(v)},\xi_v)=\sqrt{ \phi(v) (1-h(N_{\pi(v)},W)^2)}$, the last identity is equivalent to \eqref{sigmav}, which concludes.
\end{proof}
\begin{corollary}
Let $(\V, F)$ be a Randers-Minkowski space with Zermelo data $ (h,W) $ and $ S$, a hypersurface of $\V$. For $ v \in TS\setminus\bf 0 $ and $ u \in T_{\pi(v)} S $,
\begin{align}
\nonumber (1-h(N_{\pi(v)},W)^2)K_v(u)=&(h(\tfrac{v}{F(v)},\tfrac{v}{F(v)})-h(\tfrac{\tilde u}{|\tilde u|_h},\tfrac{v}{F(v)})^2)K^h(v,u)\\
\nonumber&+\frac{1}{2F(v)^3}\left[\bar\nabla_v \sigma'(v,v)h(N_{\pi(v)},W )
-\sigma'(v,v)\sigma'(v,W^{\top_{\pi(v)}}) \right.\\ \nonumber
&+ \tfrac{1}{\phi(v)F(v)}h(N_{\pi(v)},W)^2\sigma'(v,v)^2(4-\frac{5\phi(v)}{2(1-h(N_{\pi(v)},W )^2)})\\
&\quad \quad\left.-\frac{4}{\phi(v)} h(N_{\pi(v)} ,W)^{2} \sigma'(v,v)\sigma'(v,\WSigP)\right],\label{Kflagformula2}
\end{align}
where $K^h(v,u)$ is the Riemannian curvature in the plane $\{v,u\}$ computed with the metric induced by $h$ on $S$, and $\tilde u=u-\frac{1}{\phi(v)}h(v/F(v)-W,u)v/F(v)$.
\end{corollary}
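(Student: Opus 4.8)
The plan is to derive \eqref{Kflagformula2} directly from the general formula \eqref{Kflagformula} of Theorem \ref{maintheorem} by feeding in the hypersurface data of Lemma \ref{hyperSSF}; since \eqref{Kflagformula} already holds for an arbitrary $u\in T_{\pi(v)}S$ (through the auxiliary vector $\tilde u$), the whole argument is a substitution plus bookkeeping of scalar factors. First I would rewrite every occurrence of the $h$-second fundamental form using $\IIs'(u,w)=\sigma'(u,w)\,N_{\pi(v)}$, so that $h(\IIs'(\cdot,\cdot),W)=\sigma'(\cdot,\cdot)\,h(N_{\pi(v)},W)$ and $h(\IIs'(v,v),\IIs'(v,W^{\top_{\pi(v)}}))=\sigma'(v,v)\,\sigma'(v,W^{\top_{\pi(v)}})$. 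For the term $h((\bar\nabla_v\IIs')(v,v),W)$ I would note that $h(N,N)\equiv1$ forces $\widetilde\nabla_vN$ to be tangent to $S$, hence $(\widetilde\nabla_vN)^{\bot}=0$, so from \eqref{c4} one gets $(\bar\nabla_v\IIs')(v,v)=\bigl(\bar\nabla_v\sigma'\bigr)(v,v)\,N_{\pi(v)}$ and therefore $h((\bar\nabla_v\IIs')(v,v),W)=\bar\nabla_v\sigma'(v,v)\,h(N_{\pi(v)},W)$.

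The one genuinely new computation is the identity
\[
\left|\tfrac{v}{F(v)}-\WSigP\right|_h^2=\frac{\phi(v)^2}{1-h(N_{\pi(v)},W)^2}.
\]
To prove it I would use the $h$-orthogonal splitting $\V=\bigl(T_{\pi(v)}S\cap T_{v/F(v)}\Sigma\bigr)\oplus{\rm span}\{N_{\pi(v)},\,\tfrac{v}{F(v)}-W\}$, valid because $N_{\pi(v)}$ and $\tfrac{v}{F(v)}-W$ are $h$-orthogonal to $T_{\pi(v)}S\cap T_{v/F(v)}\Sigma$ and independent (as $h(N_{\pi(v)},W)^2<1$). Exactly as in the proof of Lemma \ref{tedious02}, $\WSigP$ is also the $h$-orthogonal projection of $\tfrac{v}{F(v)}$ onto $T_{\pi(v)}S\cap T_{v/F(v)}\Sigma$, so $\tfrac{v}{F(v)}-\WSigP=c\,N_{\pi(v)}+d\,(\tfrac{v}{F(v)}-W)$, and the scalars $c,d$ are pinned down by $h(\tfrac{v}{F(v)},N_{\pi(v)})=0$ and $h(\tfrac{v}{F(v)},\tfrac{v}{F(v)}-W)=\phi(v)$, giving $d=\phi(v)/(1-h(N_{\pi(v)},W)^2)$ and $c=h(N_{\pi(v)},W)\,d$; squaring yields the claim (one can also read it off from \eqref{xiv}). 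With it, the prefactor $|\tfrac{v}{F(v)}-\WSigP|_h^2/\phi(v)^2$ in the last bracket of \eqref{Kflagformula} is exactly $1/(1-h(N_{\pi(v)},W)^2)$, which cancels against the factor $1-h(N_{\pi(v)},W)^2$ placed on the left of \eqref{Kflagformula2} and turns the coefficients $4\phi(v)$ and $\tfrac52\bigl|\tfrac{v}{F(v)}-\WSigP\bigr|_h^2$ of \eqref{Kflagformula} into $4$ and $\tfrac{5\phi(v)}{2(1-h(N_{\pi(v)},W)^2)}$.

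It remains to see that the first two summands of \eqref{Kflagformula} collapse into the single sectional-curvature term of \eqref{Kflagformula2}. Here I would use that the background $(\V,h)$ is flat, so the Gauss equation for the Riemannian hypersurface $S\subset(\V,h)$ gives $\sigma'(u,u)\sigma'(v,v)-\sigma'(u,v)^2=(h(u,u)h(v,v)-h(u,v)^2)\,K^h(v,u)$; and, since $\tilde u=u+\lambda v$ for a scalar $\lambda$, the Gram determinant is unchanged, so dividing by $F(v)^2h(\tilde u,\tilde u)$ reproduces precisely $h(\tfrac{v}{F(v)},\tfrac{v}{F(v)})-h(\tfrac{\tilde u}{|\tilde u|_h},\tfrac{v}{F(v)})^2$. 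Combining this with the hypersurface form of the second summand and with the norm identity above, multiplication of \eqref{Kflagformula} by $1-h(N_{\pi(v)},W)^2$ turns its first two terms into $(h(\tfrac{v}{F(v)},\tfrac{v}{F(v)})-h(\tfrac{\tilde u}{|\tilde u|_h},\tfrac{v}{F(v)})^2)K^h(v,u)$, while the remaining four terms are the hypersurface substitutions of the last bracket of \eqref{Kflagformula}.

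The proof is conceptually light once Theorem \ref{maintheorem} and Lemma \ref{hyperSSF} are in hand: the only real inputs are the norm identity $|\tfrac{v}{F(v)}-\WSigP|_h^2=\phi(v)^2/(1-h(N_{\pi(v)},W)^2)$ and the flatness of $(\V,h)$, which merges the sectional-curvature and $\sigma'$-quadratic terms via Gauss' equation. The main obstacle is purely the bookkeeping: keeping track of the powers of $\phi(v)$, $F(v)$ and $1-h(N_{\pi(v)},W)^2$ through the substitution so that the numerical coefficients in \eqref{Kflagformula2} come out correctly.
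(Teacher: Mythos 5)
Your proposal is correct, and the bookkeeping you describe does come out right: substituting $\IIs'=\sigma' N_{\pi(v)}$, $(\bar\nabla_v\IIs')(v,v)=(\bar\nabla_v\sigma')(v,v)N_{\pi(v)}$ and the identity $\left|\tfrac{v}{F(v)}-\WSigP\right|_h^2=\phi(v)^2/(1-h(N_{\pi(v)},W)^2)$ into \eqref{Kflagformula} and multiplying by $1-h(N_{\pi(v)},W)^2$ reproduces \eqref{Kflagformula2} with the stated coefficients. Your route is a genuine (if mild) variant of the paper's. You stay entirely downstream of Theorem \ref{maintheorem} and invoke the flatness of $(\V,h)$ through the Riemannian Gauss equation $\sigma'(u,u)\sigma'(v,v)-\sigma'(u,v)^2=(h(u,u)h(v,v)-h(u,v)^2)K^h(v,u)$ to merge the first two summands of \eqref{Kflagformula} back into the single $K^h$ term (the factors $1-h(N_{\pi(v)},W)^2$ and $h(N_{\pi(v)},W)^2$ produced by the two summands add up to $1$), so you never need the explicit formulas \eqref{xiv} and \eqref{sigmav}. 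The paper instead returns to \eqref{flagcurvature}: it computes the second-fundamental-form quadratic term directly via Lemma \ref{hyperSSF}, i.e.\ with $\sigma_v=\sigma'/\sqrt{\phi(v)(1-h(N_{\pi(v)},W)^2)}$, which yields the sectional-curvature term at once, and only imports the $\nabla Q$ and Cartan terms from \eqref{Kflagformula}. The decisive shared ingredient is the norm identity for $\tfrac{v}{F(v)}-\WSigP$, which you obtain by solving for the coefficients of $N_{\pi(v)}$ and $\tfrac{v}{F(v)}-W$ in the $h$-orthogonal complement of $T_{\pi(v)}S\cap T_{v/F(v)}\Sigma$, essentially the same computation the paper packages as the explicit expression \eqref{Wsigp} for $\WSigP$. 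One small caveat: your parenthetical remark that this identity ``can also be read off from \eqref{xiv}'' is not immediate, since $\tfrac{v}{F(v)}-\WSigP$ is not proportional to $\xi_v$; but as your main derivation of the identity is complete, this aside costs nothing.
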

\begin{proof}
Let us compute the flag curvature using \eqref{flagcurvature}. The second term to the right hand side can be computed using Lemma \ref{hyperSSF}, resulting
\begin{equation}
\frac{\sigma'(u,u)\sigma'(v,v)-\sigma'(v,u)^2}{\phi(v)(1-h(N_{\pi(v)},W)^2)L(v)g_v(u,u)}
\end{equation}
choosing $u$ such that $g_v(v,u)=0$. Proceeding as in \eqref{Kflag3}, we obtain the first term to the right hand side in \eqref{Kflagformula2}. Now observe that 
\begin{equation}\label{Wsigp}
\WSigP=W-\frac{\phi(v)h(N_{\pi(v)},W)}{1-h(N_{\pi(v)},W)^2}N_{\pi(v)}-\left(-1+\frac{\phi(v)}{1-h(N_{\pi(v)},W)^2}\right)\left(\tfrac{v}{F(v)}-W\right).
\end{equation}
This follows from the fact that $T_{\pi(v)}S\cap T_{v/F(v)}\Sigma=\{N_{\pi(v)},\tfrac{v}{F(v)}-W\}^{\bot_h}$ and that the right hand side is $h$-orthogonal to both, $N_{\pi(v)}$ and $\tfrac{v}{F(v)}-W$ as it can be easily checked. From \eqref{Wsigp}, one gets immediately,
\[\tfrac{v}{F(v)}-\WSigP=-\frac{\phi(v)}{1-h(N_{\pi(v)},W)^2}\left(h(N_{\pi(v)},W)N_{\pi(v)}+\tfrac{v}{F(v)}-W\right),\]
and then
\[  |\tfrac{v}{F(v)}-\WSigP|_h^2=\frac{\phi(v)^2}{1-h(N_{\pi(v)},W)^2}.\]
Taking into account the above identity and that $\bar\nabla_X\IIs'(Y,Z)=(\bar\nabla_X\sigma')(Y,Z)N$ for any $X,Y,Z\in \mathfrak{X}(S)$, we obtain the other terms of \eqref{Kflagformula2} from the last terms of \eqref{Kflagformula}.
\end{proof}
\begin{corollary}\label{hyperclassi}
A hypersurface $S$ of a Randers-Minkowski space $(\V, F)$ with Zermelo data $ (h,W) $ is of scalar flag curvature if and only if
\[(h(\tfrac{v}{F(v)},\tfrac{v}{F(v)})-h(\tfrac{u}{|u|_h},\tfrac{v}{F(v)})^2)K^h(v,u)\]
does not depend on $u$ for any $u\in T_{\pi(v)}S$ such that $g_v(v,u)=0$.
\end{corollary}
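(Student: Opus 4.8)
The plan is to read the claim off directly from the hypersurface flag-curvature formula \eqref{Kflagformula2}, so that almost no new computation is needed. First I would recall what scalar flag curvature means here: $(S,F|_S)$ has scalar flag curvature precisely when, for each fixed flagpole $v\in TS\setminus\mathbf 0$, the value $K_v(u)$ is independent of the flag $u$ among all admissible flags. Since the flag curvature depends on $u$ only through the flag plane $\mathrm{span}\{v,u\}\subset T_{\pi(v)}S$, and since every such plane through $v$ has a representative $u$ with $g_v(v,u)=0$ — namely $\tilde u=u-g_v(v/F(v),u)\,v/F(v)$, which lies in $T_{v/F(v)}\Sigma$ by Lemma \ref{gVProptoSigmaN}(i) and satisfies $K_v(u)=K_v(\tilde u)$ — being of scalar flag curvature is equivalent to $K_v(u)$ being constant on the set $\{u\in T_{\pi(v)}S: g_v(v,u)=0,\ u\neq 0\}$ for every $v$. (On this set the admissibility condition $L(v)g_v(u,u)-g_v(u,v)^2\neq 0$ holds automatically, since $g_v$ is positive definite on $T_{\pi(v)}S$.)

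Next I would specialize \eqref{Kflagformula2} to such $u$. On the set $\{u:g_v(v,u)=0\}$ one has $\tilde u=u$, so \eqref{Kflagformula2} expresses $(1-h(N_{\pi(v)},W)^2)\,K_v(u)$ as the sum of the single $u$-dependent term $\bigl(h(\tfrac{v}{F(v)},\tfrac{v}{F(v)})-h(\tfrac{u}{|u|_h},\tfrac{v}{F(v)})^2\bigr)K^h(v,u)$ and a bracketed term $\tfrac{1}{2F(v)^3}[\cdots]$ built only from $v$, $\phi(v)$, $F(v)$, $N_{\pi(v)}$, $W^{\top_{\pi(v)}}$, $\WSigP$, $\sigma'(v,\cdot)$ and $\bar\nabla_v\sigma'(v,v)$ — hence independent of $u$. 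I would also note that the scalar prefactor is strictly positive: by Cauchy--Schwarz and the standing hypothesis $h(W,W)<1$ we get $h(N_{\pi(v)},W)^2\le h(W,W)<1$, so $1-h(N_{\pi(v)},W)^2>0$ and one may divide through by it without affecting any statement about dependence on $u$.

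Combining the two steps, $K_v(u)$ is constant on $\{u:g_v(v,u)=0\}$ if and only if $\bigl(h(\tfrac{v}{F(v)},\tfrac{v}{F(v)})-h(\tfrac{u}{|u|_h},\tfrac{v}{F(v)})^2\bigr)K^h(v,u)$ is; together with the reduction of the first paragraph, this is exactly the asserted characterization. I do not anticipate a genuine obstacle: the heavy lifting was carried out in Lemma \ref{tedious02}, Theorem \ref{maintheorem} and the derivation of \eqref{Kflagformula2}, which already isolates the unique $u$-dependent term. The only point demanding a little care is the first step — justifying that scalar flag curvature may be tested against $g_v$-orthogonal flags alone — which rests on the standard fact that the flag curvature is a function of the flagpole and the flag plane, combined with the identity $\tilde u=u$ on that set.
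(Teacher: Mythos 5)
Your proposal is correct and is essentially the paper's own (implicit) argument: the corollary is read off from \eqref{Kflagformula2}, whose only $u$-dependent term is $\bigl(h(\tfrac{v}{F(v)},\tfrac{v}{F(v)})-h(\tfrac{\tilde u}{|\tilde u|_h},\tfrac{v}{F(v)})^2\bigr)K^h(v,u)$, the remaining terms and the nonvanishing factor $1-h(N_{\pi(v)},W)^2$ depending only on $v$. Your preliminary reduction to flags with $g_v(v,u)=0$ (where $\tilde u=u$) matches the normalization already used in the proof of Theorem \ref{maintheorem}, so nothing further is needed.
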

\begin{corollary}\label{flatclass}
Let $(\V, F)$ be a Randers-Minkowski space with Zermelo data $ (h,W)$. Then any $h$-flat hypersurface $S$ of $(\V,h)$ has $F$-scalar flag curvature and its Zermelo metric is conformally flat.
\end{corollary}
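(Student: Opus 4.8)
The statement is the conjunction of two assertions, and the plan is to obtain each one as an immediate specialization of results already established, so the argument will be short. First I would spell out what the hypothesis means: $S$ being an \emph{$h$-flat hypersurface of $(\V,h)$} is precisely the condition that the metric induced by $h$ on $S$ has vanishing Riemannian curvature, i.e. $K^h(v,u)=0$ for every $v\in TS\setminus\mathbf{0}$ and every $u\in T_{\pi(v)}S$.

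For the scalar flag curvature claim I would invoke Corollary~\ref{hyperclassi}, which says that a hypersurface $S$ of a Randers-Minkowski space has $F$-scalar flag curvature if and only if the quantity $(h(\tfrac{v}{F(v)},\tfrac{v}{F(v)})-h(\tfrac{u}{|u|_h},\tfrac{v}{F(v)})^2)K^h(v,u)$ does not depend on $u$ among the $u\in T_{\pi(v)}S$ with $g_v(v,u)=0$. Since $K^h\equiv 0$ on an $h$-flat hypersurface, this expression vanishes identically, hence is trivially independent of $u$, and the first conclusion follows at once. (Equivalently, one could plug $K^h\equiv 0$ into the hypersurface formula \eqref{Kflagformula2} and read off that $K_v(u)$ no longer involves $u$, but going through Corollary~\ref{hyperclassi} is cleaner.)

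For the conformal flatness of the Zermelo metric I would use Proposition~\ref{ZermSub}, which identifies the Zermelo data of $(S,F|_S)$ with $\bigl(\tfrac{1}{1-h(W^\bot,W^\bot)}\,h,\;W^\top\bigr)$. Thus the metric of the Zermelo data of $S$ is the rescaling $\tfrac{1}{1-h(W^\bot,W^\bot)}\,h|_{TS}$ of the induced flat metric $h|_{TS}$. The conformal factor is a well-defined positive smooth function on $S$: from the $h$-orthogonal splitting $W=W^\top+W^\bot$ one gets $h(W^\bot,W^\bot)\le h(W,W)<1$, so $1-h(W^\bot,W^\bot)\ge \ell(W)>0$. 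Being globally conformal to the flat metric $h|_{TS}$, the Zermelo metric of $S$ is conformally flat, which completes the proof.

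I do not anticipate a serious obstacle, since all the heavy machinery — the flag-curvature formula in Zermelo terms (Theorem~\ref{maintheorem}), its hypersurface specialization \eqref{Kflagformula2}, the characterization in Corollary~\ref{hyperclassi}, and the description of the induced Zermelo data in Proposition~\ref{ZermSub} — is already in place. The only point meriting a line of care is verifying that the conformal factor $1/(1-h(W^\bot,W^\bot))$ is positive and smooth on all of $S$, which, as noted, is a direct consequence of the standing assumption $h(W,W)<1$.
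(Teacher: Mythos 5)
Your proof is correct and follows exactly the route the paper intends: the paper states this corollary without proof precisely because it is the immediate specialization of Corollary~\ref{hyperclassi} (with $K^h\equiv 0$ the characterizing quantity vanishes, hence is trivially $u$-independent) together with Proposition~\ref{ZermSub} (the Zermelo metric $\tfrac{1}{1-h(W^\bot,W^\bot)}h|_{TS}$ is conformal to the flat induced metric). Your extra remark that the conformal factor is positive and smooth, via $h(W^\bot,W^\bot)\le h(W,W)<1$, is a correct and welcome clarification.
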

\begin{corollary}\label{indicatrix}
Let $(\V, F)$ be a Randers-Minkowski space with Zermelo data $ (h,W)$. The flag curvature of the indicatrix $\Sigma$ at $x\in\Sigma$ is given by
\begin{align}
\nonumber K_v(u)=&\frac{1}{1-h(N_{x},W)^2}\left[\vphantom{\frac{a}{F(a)}}h(\tfrac{v}{F(v)},\tfrac{v}{F(v)})-h(\tfrac{\tilde u}{|\tilde u|_h},\tfrac{v}{F(v)})^2\right.\\
\nonumber&\hspace{-0.8cm} \left.+ \frac{h(v,v)}{2F(v)^3(1-h(N_x,W)^2)}( -h(v,W)(1+3h(N_x,W)^2)+\frac{3}{2F(v)}h(N_x,W)^{2}h(v,v)\right],\label{curvind}
\end{align}
where $v\in TS\setminus \bf 0$, $u\in T_xS$, $\tilde u=u-\frac{1}{\phi(v)}h(v/F(v)-W,u)v/F(v)$ and $N_x=-(x-W)$.
\end{corollary}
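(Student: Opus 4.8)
The plan is to recognize the indicatrix $\Sigma = F^{-1}(1)$ for what it is — by the definition of the Zermelo data, $\Sigma$ is the $h$-unit sphere translated by $W$, i.e.\ $\Sigma = \{x \in \V : h(x - W, x - W) = 1\}$ — and then to feed its Riemannian data into the hypersurface flag-curvature formula \eqref{Kflagformula2} with $S = \Sigma$. The data I need are: the unit $h$-normal at $x$, which is $N_x = -(x - W)$ (it is $h$-unit and $h$-orthogonal to $T_x\Sigma$); the $h$-sectional curvature, which is $K^h(v,u) = 1$ for every $2$-plane in $T_x\Sigma$ since the induced metric is that of a round unit sphere; and the second fundamental form with respect to $h$. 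For the latter, since the Levi-Civita connection of $(\V, h)$ is the flat connection of $\V$, differentiating the tangency relation $h(Y, x - W) = 0$ along $\Sigma$ gives $\IIs'(u,w) = h(u,w)\,N_x$, i.e.\ $\sigma'(u,w) = h(u,w)$. In particular $\sigma'$ is the induced metric on $\Sigma$, so $\bar\nabla\sigma' = 0$ and hence the term $\bar\nabla_v\sigma'(v,v)$ in \eqref{Kflagformula2} vanishes.

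Substituting $K^h \equiv 1$ into \eqref{Kflagformula2}, the first line collapses to $h(v/F(v), v/F(v)) - h(\tilde u/|\tilde u|_h, v/F(v))^2$, already the leading term of \eqref{curvind}. In the bracket I would use $\bar\nabla_v\sigma'(v,v) = 0$, $\sigma'(v,v) = h(v,v)$, and — since $v \in T_x\Sigma$ forces $h(v, N_x) = 0$, whence $W^{\top_{\pi(v)}} = W - h(W, N_x) N_x$ — the equality $\sigma'(v, W^{\top_{\pi(v)}}) = h(v, W^{\top_{\pi(v)}}) = h(v, W)$. The only term not yet expressed through $h$, $W$, $F(v)$ and $h(N_x, W)$ is $\sigma'(v, \WSigP) = h(v, \WSigP)$.

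To deal with it I would reuse the explicit expression \eqref{Wsigp} for $\WSigP$ obtained in the proof of the corollary that precedes this one; pairing that identity with $v$ and using $h(v, N_x) = 0$ together with $h(v, v/F(v) - W) = F(v)\phi(v)$ — immediate from the definition \eqref{mu} of $\phi$, equivalently $h(v,v) - F(v) h(v, W) = F(v)^2\phi(v)$ — writes $h(v, \WSigP)$ purely in terms of $F(v)$, $\phi(v)$, $h(v, W)$ and $h(N_x, W)$. Feeding this back into the bracket of \eqref{Kflagformula2}, and then using $h(v,v) = F(v)^2\phi(v) + F(v) h(v, W)$ to absorb the remaining $1/\phi(v)$ factors, makes all $\phi(v)$-dependence cancel; a direct simplification then collects the $h(v, W)$ terms into the coefficient $-(1 + 3h(N_x, W)^2)$ and the $h(v,v)$ terms into $\tfrac{3}{2F(v)} h(N_x, W)^2 h(v,v)$, yielding \eqref{curvind} after dividing \eqref{Kflagformula2} by $1 - h(N_x, W)^2$.

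The geometric input is elementary once $\Sigma$ is seen as a round $h$-sphere centered at $W$; the only real work — and the step I expect to be the main obstacle — is the closing algebra: tracking the cancellations of the $\phi(v)$, $F(v)$, $h(v,W)$ and $h(N_x,W)$ terms after substituting $h(v,\WSigP)$, and in particular checking that the coefficient $4 - \tfrac{5}{2}\,\tfrac{\phi(v)}{1 - h(N_x,W)^2}$ from \eqref{Kflagformula2} combines with the $h(v,\WSigP)$ contribution to produce exactly the numerical coefficients in \eqref{curvind}.
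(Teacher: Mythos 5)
Your proposal is correct and follows essentially the same route as the paper: identify $\Sigma$ as the round $h$-unit sphere centered at $W$, so that $N_x=-(x-W)$, $\sigma'=h|_{T\Sigma}$, $\bar\nabla\sigma'=0$ and $K^h\equiv 1$, compute $\sigma'(v,W^{\top_{\pi(v)}})=h(v,W)$ and $h(v,\WSigP)$ from \eqref{Wsigp} using $h(v,N_x)=0$, and substitute into \eqref{Kflagformula2}. The paper likewise leaves the final algebraic collection of coefficients implicit, so your treatment matches it in both method and level of detail.
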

\begin{proof}
First observe that the second fundamental form of $\Sigma$ with respect to $h$ is the restriction of $h$ to $\Sigma$ whenever one chooses as the normal vector $N_x=-(x-W)$. Then $\bar\nabla\sigma'=0$, since $\bar\nabla$ is the induced connection, which in Riemannian geometry turns out to be the Levi-Civita connection of the induced metric on $S$. Moreover,
$\sigma'(v,W^{\top_{\pi(v)}})=h(v,W^{\top_{\pi(v)}})=h(v,W)$ as $v$ is tangent to $S$ and using \eqref{Wsigp}, we deduce that
\begin{align}
\nonumber h(v,\WSigP)&=h(v,W)-(-1+\frac{\phi(v)}{1-h(N_x,W)^2})h(\tfrac{v}{F(v)}-W,v)\\&=
h(v,W)\frac{\phi(v)}{1-h(N_x,W)^2}-\frac{1}{F(v)}h(v,v)\left(-1+\frac{\phi(v)}{1-h(N_x,W)^2}\right).
%\frac{1}{F(v)}h(v,v)-\frac{F(v)\phi(v)^2}{1-h(N_x,W)^2}.
\end{align}
With all this information, and taking into account that the $h$-Riemannian curvature of $\Sigma$ is equal to $1$, \eqref{curvind} follows from \eqref{Kflagformula2}.
\end{proof}
 \begin{remark}\label{pseudoRanders}
Observe that for the sake of simplicity we have written this section for classical Randers metrics, but all the computations, up to some sign, hold for pseudo-Randers-Kropina metrics, which can be characterized by its Zermelo data $(h,W)$, being $h$ a non-degenerate scalar product and $W$ an arbitrary constant vector with no restrictions of norm (see \cite[Section 2.3]{JavVit18}). Several observations are in order:
\begin{enumerate}[(i)]
\item Kropina norms are obtained when $h$ is positive definite and $h(W,W)=1$, and all the submanifolds will be of Kropina type (this can be obtained as a consequence of the generalization of Proposition \ref{ZermSub}).   \item When $h(W,W)>1$, the situation is more complex, as there are two metrics. This is called a wind Riemannian structure in \cite{CJS14}. Our results can be applied to both metrics and an application of Proposition \ref{ZermSub} shows that the induced metric is always a wind Riemannian structure (neither Randers nor Kropina). For a classificacion of wind Riemannian structures with constant flag curvature see \cite{JavSan17}.
\item  In any case, it is also true that the $h$-flat hypersurfaces have $F$-scalar flag curvature and that we can obtain families of Kropina metrics with scalar flag curvature (see \cite{Xia13} for further results on Kropina metrics with scalar flag curvature and \cite{YoOk07,YoSa14} for a classificacion of Kropina metrics with constant flag curvature).
\end{enumerate}
\end{remark} 
\section*{Acknowledgments}
The authors warmly acknowledge useful discussions about the classification of isometric immersions in the Euclidean space of manifolds of constant sectional curvature with Professors Marcos Dajczer (Impa, Rio de Janeiro, Brazil) and Ruy Tojeiro (USP, S\~ao Carlos, Brazil), and Professors J\'ozsef Szilasi (University of Debrecen, Hungary), Nicoleta Voicu (Transilvania University, Brasov, Romania) and Ioan Bucataru (Alexandre Ioan Cuza University, Iasi, Romania) for pointing out some references.
%\bibliographystyle{siam}
%\bibliography{mybib}

\end{document}